\documentclass[12pt,fleqn]{article}
\usepackage{etex, amssymb,latexsym,amsmath,amsfonts,graphicx, ebezier}
\usepackage{pictex,epsfig,times, rotating}
\usepackage{color}

\definecolor{refkey}{rgb}{0,0,1}
\definecolor{labelkey}{rgb}{1,0,0}

    \topmargin -15mm
   \textwidth 160 true mm
   \textheight 230 true mm
   \oddsidemargin 5mm
   \evensidemargin 5mm
   \marginparwidth 19mm
   \advance\textheight by \topskip

  \numberwithin{equation}{section}

   \def\Re{{\rm Re \,}}
   
   \def\Ai{{\rm Ai \,}}

   \def\bigO{{\cal O}}

   \def\P2n{{\rm P}_{{\rm II}}^{(n)}}

   \newcommand{\C}{\mathbb{C}}

\newcommand{\N}{\mathbb{N}}

\renewcommand{\O}{\mathcal{O}}

   \newtheorem{theorem}{Theorem}[section]
   \newtheorem{lemma}[theorem]{Lemma}
   \newtheorem{corollary}[theorem]{Corollary}
   \newtheorem{proposition}[theorem]{Proposition}
   
   \newtheorem{Definition}[theorem]{Definition}
   
   \newtheorem{Remark}[theorem]{Remark}
   \newenvironment{remark}{\begin{Remark}\rm}{\end{Remark}}
   \newtheorem{Example}[theorem]{Example}
   
   \newtheorem{Assumptions}[theorem]{Assumptions}

   \newcommand{\e}{\epsilon}

   \newenvironment{proof}%
   {\rm \trivlist \item[\hskip \labelsep{\bf Proof. }]}%
   {\hspace*{\fill}$\Box$\endtrivlist}
   {\rm \trivlist \item[\hskip \labelsep{\bf Proof}]}%
   {\hspace*{\fill}$\Box$\endtrivlist}

\begin{document}
\title{On sequences of rational interpolants of the exponential function with unbounded
interpolation points}
\author{T. Claeys and F. Wielonsky}
\maketitle

\begin{abstract}
We consider sequences of rational interpolants $r_{n}(z)$ of degree
$n$ to the exponential function $e^{z}$ associated to a triangular scheme of
complex points $\{z_{j}^{(2n)}\}_{j=0}^{2n}$, $n>0$, such that, for
all $n$, $|z_{j}^{(2n)}|\leq cn^{1-\alpha}$, $j=0,\ldots,2n$, with
$0<\alpha\leq 1$ and $c>0$. We prove the local uniform convergence
of $r_{n}(z)$ to $e^{z}$ in the complex plane, as $n$ tends to
infinity, and show that the limit distributions of the conveniently
scaled zeros and poles of $r_{n}$ are identical to the corresponding
distributions of the classical Pad\'e approximants. This extends
previous results obtained in the case of bounded (or growing like
$\log n$) interpolation points. To derive our results, we use the
Deift-Zhou steepest descent method for Riemann-Hilbert problems. For
interpolation points of order $n$, satisfying $|z_{j}^{(2n)}|\leq
cn$, $c>0$, the above results are false if $c$ is large, e.g. $c\geq
2\pi$. In this connection, we display numerical experiments showing
how the distributions of zeros and poles of the interpolants may be
modified when considering different configurations of interpolation
points with modulus of order $n$.
\end{abstract}
{\bf AMS classification:} 41A05, 41A21, 30E10, 30E25, 35Q15
\\[\baselineskip]
{\bf Key words and phrases:} Rational interpolation, Riemann-Hilbert problem, Strong asymptotics.
\section{Introduction and main results}
Rational approximants to the exponential function have been the
object of numerous studies in the literature. One motivation comes
from the fact that the approximation of the exponential function
naturally appears in many problems from applied mathematics, like,
for instance, the stability of numerical methods for solving
differential equations, the modeling of time-delay systems to be
found, e.g. in electrical or mechanical engineering, and the
efficient computation of the exponential of a matrix. Another, more
theoretical, motivation comes from the particular properties of the
exponential and its approximants in the framework of function
theory. One classical example of such properties is Pad\'e's theorem
about the convergence of Pad\'e approximants to the exponential, and
its connection with deep results in analytic number theory. Another
typical problem has been the one of finding the rate of rational
approximation to the exponential on the semi-axis, the so-called
$1/9$-conjecture. It attracted the efforts of many authors in the
eighties and was eventually solved in \cite{GR}.

The behavior of Pad\'e approximants to the exponential function has
been studied, among others, in \cite{SV1,SV2,SV3,VC,TEM}, and for
extensions to Hermite-Pad\'e approximants, one may consult
\cite{STA,STA2, KVW,KSVW}. Generalizations to rational interpolants
are investigated in \cite{BOR1, BOR2, BSW, W0, W}. In \cite{W}, it
is shown that rational interpolants to the exponential function with
bounded complex interpolation points (also with points growing like
a logarithm of the degree) converge locally uniformly in the complex
plane, as the degree of the interpolant tends to infinity. The proof
uses the Deift-Zhou steepest descent method for Riemann-Hilbert
problems \cite{DZ, Deift, DKMVZ2, DKMVZ1}. In the present paper, we
consider the case of interpolation points whose modulus may grow
with the degree $n$ of the interpolants, namely like $n^{1-\alpha}$,
$0<\alpha\leq 1$, and we show that the Deift-Zhou method can still
be used to show convergence of the interpolants as $n\to\infty$. For
interpolation points whose growth is linear with respect to the
degree, it is easy to see from the periodicity of the exponential on
the imaginary axis that convergence cannot always hold true. Also,
for the particular case of shifted Pad\'e approximants,
interpolating the exponential at the point $n\xi$, $\xi\in\C $, it
is possible to give a necessary and sufficient condition on $\xi$
for convergence to hold true, see \cite{W1}.

Let us now describe our findings in more detail. Given a triangular
sequence of complex interpolation points
$\{z_j^{(n_{1}+n_{2})}\}_{j=0}^{n_{1}+n_{2}}$, $n_{1}+n_{2}>0$, we
are interested in the behavior, as $n_{1}+n_{2}$ becomes large,  of
the rational function $\frac{p_{n_{1}}}{q_{n_{2}}}$, with
$p_{n_{1}}, q_{n_{2}}$ polynomials satisfying the conditions:
\begin{equation}\label{degree}
\text{(i) }
\deg p_{n_{1}}\leq n_{1},~\deg q_{n_{2}}\leq n_{2},
\end{equation}
\begin{equation}
\label{interpolation} \text{(ii) }
e_{n_{1},n_{2}}(z):=p_{n_{1}}(z)e^{-z/2}+q_{n_{2}}(z)e^{z/2}=\bigO(\omega_{n_
{1}+n_{2}+1}(z)),
\end{equation}
\begin{equation*}
\quad~~\text{as }z\to z_j^{(n_{1}+n_{2})}, ~j=0, \ldots,
n_{1}+n_{2},
\end{equation*}
with
\begin{equation}
\omega_{n_{1}+n_{2}+1}(z)=\prod_{j=0}^{n_{1}+n_{2}}(z-z_j^{(n_{1}+n_{2})}).
\end{equation}
For any choice of (possibly multiple) interpolation points, nontrivial polynomials $p_{n_ {1}}$ and $q_{n_{2}}$, such that
(\ref{degree})--(\ref{interpolation}) hold true, always exist since
these conditions are equivalent to a system of $2n+1$ homogeneous
linear equations with $2n+2$ unknowns.

In this paper, we will only be interested in the diagonal case
\begin{equation}\label{degree-diag}
\deg p_{n}\leq n,\quad \deg q_{n}\leq n,
\end{equation}
and
\begin{equation}\label{interpolation-diag}
p_{n}(z)e^{-z/2}+q_{n}(z)e^{z/2}=\bigO(\omega_{2n+1}(z)), \quad z\to
z_j^{(2n)}, \quad j=0, \ldots, 2n,
\end{equation}
though the general case could be studied similarly. As we will see
in the sequel, even if we restrict ourselves to the diagonal case,
pairs of polynomials of type $(n-1,n+1)$, that is of degrees
respectively less than or equal to $n-1$ and $n+1$, will show up in
the study.

Let us write
\begin{equation}\label{def-rho}
\rho_n:=\max\{|z_j^{(2n)}|: j=0, \ldots, 2n\},\qquad n\in\N.
\end{equation}
If the interpolation points do not grow too rapidly with $n$, i.e.\
if there exist constants $0<\alpha\leq 1$ and $c>0$ such that
\begin{equation}\label{logbound}
\rho_n\leq \frac{1-\alpha}{2}\log n + c, \qquad n\in\N,
\end{equation}
it was proved in \cite[Theorem 2.2]{W} that a pair $(p_{n},q_{n})$,
such that (\ref{degree-diag})-(\ref{interpolation-diag}) hold true
(with $n_{1}=n_{2}=n$), satisfies
\begin{equation*}
p_{n}(z)\to-e^{z/2},\quad q_{n}(z)\to e^{-z/2},\end{equation*}
locally uniformly in $\C$, where $q_{n}$ is normalized so that
$q_{n}(0)=1$. In particular, $p_n(z)/q_n(z)$ converges to $-e^z$
uniformly on compact sets in the complex plane as $n\to\infty$. Our
aim is to weaken the condition (\ref{logbound}) to interpolation
points for which there exists $0<\alpha\leq 1$ and $c>0$
(independent of $n$) such that
\begin{equation}\label{nbound}
\rho_n\leq c n^{1-\alpha}, \qquad n\in\N.
\end{equation}
This is our main result.
\begin{theorem}\label{main thm}
Let $z_j^{(2n)}$, $n>0$, $j=0, \ldots, 2n$, be a family of
interpolation points satisfying (\ref{nbound}) with $0<\alpha\leq 1$
and $c>0$.
Let $p_n$ and $q_n$ be polynomials
satisfying (\ref{degree-diag})-(\ref{interpolation-diag}). Then, the
following three assertions hold true:
\begin{enumerate}
\item[(i)] All the zeros and poles of $r_n=p_n/q_n$ tend to infinity,
  as $n$ becomes large, and, more precisely, no zeros and poles of
  $r_n$ lie in the disk $\{z,|z|\leq\rho_{n}\}$, for $n$ large. In particular, dividing equation
  (\ref{interpolation-diag})
by $q_n$, we get
  $r_n=p_{n}/q_{n}$ as a rational interpolant to $-e^z$ satisfying
\begin{equation*}e^z+r_n(z)=\O(\omega_{2n+1}(z)),\qquad\mbox{ as $z\to z_j^{(2n)}$, $j=0, \ldots, 2n$.}\end{equation*}
\item[(ii)] As $n\to\infty$,
\begin{equation} \label{limpq}
p_n(z)\to -e^{z/2},\quad q_n(z)\to e^{-z/2},\quad r_n(z)\to -e^z,
\end{equation}
locally uniformly in $\C$, where $q_n$ is normalized so that
$q_n(0)=1$.
\item[(iii)] for $n$ large,
\begin{equation} \label{limerr}
e^z+r_n(z) =(-1)^{n}\left(\frac{ec_n}{4n}\right)^{2n+1}w_{2n+1}(z)e^{z-1}
\left(1+\O\left(\frac{1}{n^\alpha}\right)\right),
\end{equation}
locally uniformly in $\C$, where $c_{n}$ is a constant that depends only on the interpolation points $z_j^{(2n)}$ and such that
$$c_{n}=1+\bigO\left(\left(\frac{\rho_{n}}{n}\right)^{2}\right),\quad\text{as }n\to\infty.$$
In particular, if $\rho_{n}=\epsilon(n)\sqrt{n}$ with $\epsilon(n)$ which tends to 0 as $n$ tends to infinity, then (\ref{limerr}) can be rewritten as
\begin{equation} \label{limerr2}
e^z+r_n(z) =(-1)^{n}\left(\frac{e}{4n}\right)^{2n+1}w_{2n+1}(z)e^{z-1}
\left(1+\O(\epsilon^{2}(n))+\O\left(\frac{1}{n^\alpha}\right)\right).
\end{equation}
\end{enumerate}
\end{theorem}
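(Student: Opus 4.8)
The strategy is to recast the interpolation conditions (\ref{degree-diag})--(\ref{interpolation-diag}) as a $2\times2$ Riemann--Hilbert problem (RHP) of Fokas--Its--Kitaev type and to analyze it by the Deift--Zhou steepest descent method \cite{DZ,Deift,DKMVZ1,DKMVZ2}, following the scheme of \cite{W} for bounded interpolation points but reworking the estimates so that the weaker decay permitted by (\ref{nbound}) still closes the argument. Put
\begin{equation*}
e_{n}(z):=p_{n}(z)e^{-z/2}+q_{n}(z)e^{z/2},\qquad\text{so that}\qquad e^{z}+r_{n}(z)=\frac{e^{z/2}\,e_{n}(z)}{q_{n}(z)};
\end{equation*}
then $e_{n}$ is entire, vanishes at each $z_{j}^{(2n)}$ with the prescribed order, and assertions (i)--(iii) reduce to controlling $q_{n}$ and $e_{n}$ on fixed compact sets (for (ii)--(iii)) and on the growing disk $\{|z|\le\rho_{n}\}$ (for (i)). I would therefore introduce a matrix $Y_{n}$, analytic off a contour $\Gamma_{n}$ enclosing all the $z_{j}^{(2n)}$ (hence the disk $\{|z|\le\rho_{n}\}$), with jump across $\Gamma_{n}$ the unipotent matrix whose single nonzero off-diagonal entry is $e^{-z}/\omega_{2n+1}(z)$ and with the usual normalization at infinity; its entries encode a solution $(p_{n},q_{n})$ of (\ref{degree-diag})--(\ref{interpolation-diag}) (or of the associated type-$(n-1,n+1)$ problem mentioned in the Introduction), and $e_{n}$ is recovered from the off-diagonal entries as a Cauchy transform. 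The RHP is uniquely solvable for $n$ large --- a by-product of the analysis below --- so the solution of (\ref{degree-diag})--(\ref{interpolation-diag}) is then unique up to scaling, and normalizing $q_{n}(0)=1$ makes everything unambiguous.

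Next I would rescale $z=nw$. By (\ref{nbound}) the rescaled points satisfy $|z_{j}^{(2n)}/n|\le cn^{-\alpha}\to 0$, so they collapse to the origin and the rescaled RHP formally degenerates to the one governing the classical $[n/n]$ Pad\'e approximant of $e^{z}$; in that limit $\Gamma_{n}$ shrinks to $w=0$ and the relevant external field is $V(w)=\tfrac{1}{2}w+\log w$, whose equilibrium measure is supported on the limiting curve --- the analogue for $e^{z}$ of Szeg\H{o}'s curve, see \cite{SV1,SV2,SV3,W} --- which is bounded away from the origin. At finite $n$ the effective field is $V_{n}(w)=\frac{n}{2n+1}w+\frac{1}{2n+1}\sum_{j}\log(w-z_{j}^{(2n)}/n)$, and it tends to $V$ uniformly on compact subsets of $\C\setminus\{0\}$ at the rate $\O(n^{-\alpha})$. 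A key point is that $\omega_{2n+1}$ (equivalently $V_{n}$) must be kept throughout and not replaced by $(nw)^{2n+1}$: these exponents are multiplied by $2n+1$ in the RHP, so the two weights differ by an unbounded factor, and near $w=0$ --- precisely where the compact sets in (ii)--(iii) live --- $\omega_{2n+1}(z)$ and $z^{2n+1}$ genuinely differ. I would therefore build the construction on the equilibrium measure $\mu_{V_{n}}$ and its $g$-function $g_{n}$, whose support is an analytic curve at distance $\O(n^{-\alpha})$ from the limiting curve.

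The core is then the chain of transformations $Y_{n}\mapsto T_{n}\mapsto S_{n}\mapsto R_{n}$: normalization at infinity by $g_{n}$, opening of lenses around the support of $\mu_{V_{n}}$ (unobstructed, since $e^{-z}/\omega_{2n+1}(z)$ is analytic on and outside $\Gamma_{n}$ once the cluster of zeros is enclosed), and comparison with a global parametrix $N_{n}$, together with the appropriate local parametrices near the special points of the support. The new content relative to \cite{W} is to check that all of these steps survive with (\ref{nbound}) in place of (\ref{logbound}): the lens geometry, the sign of the real part of the relevant $\phi$-functions on the lenses' lips, and the matching of $N_{n}$ with the local models must hold with errors that are now $\O(n^{-\alpha})$ instead of $\O(\log n/n)$. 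A standard small-norm argument then gives $R_{n}=I+\O(n^{-\alpha})$ uniformly on $\C$. Unwinding the transformations near $w=0$ (that is, $w=z/n\to 0$, well inside the support), the global parametrix gives, after imposing $q_{n}(0)=1$, that $q_{n}(z)\to e^{-z/2}$, $p_{n}(z)\to-e^{z/2}$, hence $r_{n}(z)\to-e^{z}$, which is (ii); the same estimates hold, with relative error $\O(n^{-\alpha})$, on $\{|z|\le\rho_{n}\}$, where the relevant parametrix entry is bounded away from $0$, so by Rouch\'e neither $p_{n}$ nor $q_{n}$ vanishes there, while the scaled zeros and poles of $r_{n}$ stay $\O(n^{-\alpha})$-close to the limiting curve; this proves (i) and identifies the limit distributions with the classical Pad\'e ones.

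Finally, for (iii) the same parametrix produces the explicit leading term of $e_{n}(z)$: the factor $\omega_{2n+1}(z)$ enters through $g_{n}$ and carries all the dependence on the interpolation points, in particular their first moment; the power $(e/(4n))^{2n+1}$ and the factor $e^{z-1}$ come from the values of $g_{n}$ and of the external field at the relevant points; and the residual constant $c_{n}$ is the discrepancy between the Robin constant of $V_{n}$ and that of $V$. Dividing by $q_{n}(z)\to e^{-z/2}$ gives (\ref{limerr}). Since $\omega_{2n+1}$ already absorbs the first-order effect of the points, a direct expansion shows that $c_{n}$ feels them only at second order, whence $c_{n}=1+\O((\rho_{n}/n)^{2})$, and when $\rho_{n}=\epsilon(n)\sqrt{n}$ one has $c_{n}^{2n+1}=1+\O(\epsilon^{2}(n))$, giving (\ref{limerr2}). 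The main obstacle, I expect, is twofold: setting up the equilibrium problem and the $g$-function for the $n$-dependent field $V_{n}$ and verifying that the contour deformations and the parametrix matching remain valid \emph{uniformly} under the weaker hypothesis (\ref{nbound}) --- this is where the argument genuinely departs from \cite{W} --- and, second, the bookkeeping near $w=0$ needed to isolate the precise leading term in (iii), keeping $\omega_{2n+1}(z)$ intact and extracting $c_{n}$ with its $\O((\rho_{n}/n)^{2})$ bound.
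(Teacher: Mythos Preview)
Your overall strategy coincides with the paper's: recast (\ref{degree-diag})--(\ref{interpolation-diag}) as a $2\times2$ Riemann--Hilbert problem, rescale by $2n$, build an $n$-dependent $g$-function, run $Y\mapsto T\mapsto S\mapsto R$ with an outer parametrix and local Airy parametrices, and then unwind near the origin. The identification of $c_n$ with the discrepancy in Robin constants and the reason it is $1+\O((\rho_n/n)^2)$ are also correct in spirit.

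There is, however, one genuine technical point that you gloss over and that is the main new content of the paper. You write that you would ``build the construction on the equilibrium measure $\mu_{V_n}$ and its $g$-function $g_n$, whose support is an analytic curve at distance $\O(n^{-\alpha})$ from the limiting curve.'' The paper does \emph{not} solve the exact equilibrium problem for $V_n$: it is not known whether the endpoint equations $h_n(a_n)=h_n(b_n)=0$ can be solved for general configurations satisfying (\ref{nbound}) (the paper leaves this open as a remark, and notes that a positive answer would allow the linear-growth regime). Instead the endpoints $a_n,b_n$ are taken as \emph{truncated} power series in $t=\rho_n/n$, chosen so that $h_n(a_n),h_n(b_n)=\O(t^{k+1})$ for a fixed $k>1/(2\alpha)$. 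As a consequence $\phi_n(z)$ does \emph{not} vanish like $(z-b_n)^{3/2}$ near $b_n$, and the standard Airy parametrix does not apply. The paper handles this by writing $\phi_n=\tfrac43 f_n^{3/2}+2s_nf_n^{1/2}$ with $s_n=\O(t^{k+1})$ and building the local model from $A(n^{2/3}(f_n+s_n))$; the extra $s_n$-term produces the error terms $\O(n^{-(k+1)\alpha})$ and $\O(n^{1-2(k+1)\alpha})$ in the matching, which is precisely why $k>1/(2\alpha)$ is needed and why the final small-norm bound is $R=I+\O(n^{-2\widehat\alpha})$ with $\widehat\alpha=\min(\alpha,1/2)$ rather than the $\O(n^{-\alpha})$ you state. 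If you try to use the exact equilibrium measure as you propose, you will either have to prove existence of the exact endpoints (which the paper does not do) or you will be unable to construct the local parametrix.

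A smaller point on (iii): the precise constants in (\ref{limerr}) require more than ``the values of $g_n$ and of the external field at the relevant points''. The paper derives closed-form expressions for $g_n(z)$, $D_n^2(z)$ and $\ell_n$ via a Joukowski-type substitution (their Proposition~\ref{prop gD}), then expands $2g_n(0)+2\ell_n$ in powers of $t$ using the explicit first coefficients $\alpha_1,\beta_1$ of the approximate endpoints; the cancellation of the $\O(t)$ term in this expansion is what gives $c_n=1+\O(t^2)$. Your outline is correct that the first-moment effect is absorbed by $\omega_{2n+1}$, but the actual computation goes through these explicit formulas rather than a generic parametrix evaluation.
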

\begin{remark}
For the special case of bounded interpolation points, the error estimate (\ref{limerr2}) agrees with the estimate (2.4) of \cite{W} except for a minus sign that was incorrect there.
\end{remark}
\begin{remark}
For the theorem to be true, an assumption on the growth of
$\rho_{n}$ is mandatory. Indeed, if we allow $\rho_{n}$ to grow
linearly in the degree, that is $\rho_{n}\leq cn$ with $c>0$ some
constant, the theorem can be false. For instance, if $c=2\pi$, it
suffices to consider the constant function $r_{n}(z)=1$ which
interpolates the exponential $e^{z}$ at the points $\{\pm2i\pi
j,~j=0,\ldots,n\}$ and does not converge to it as $n$ tends to
infinity. The particular case of shifted Pad\'e approximants also
shows that the theorem is false for linear growth, even for constant
$c$ smaller than $2\pi$. Indeed, denote by $c_{0}=0.66274...$ the
positive real root of the equation
\begin{equation}
\label{crit-eq} \sqrt{z^{2}+1}+\log\frac{z}{1+\sqrt{z^{2}+1}}=0.
\end{equation}
Then, it follows from results in \cite{W1} that shifted Pad\'e
approximants of degree $n$,  interpolating $e^{z}$ at the point
$nc$, where $c$ is any real number with $|c|\geq c_{0}$, does not
converge to $e^{z}$. Still, we conjecture that Theorem \ref{main
thm} remains true if $\rho_n\leq cn$ with $c<c_{0}$.
\end{remark}

The next theorem describes the limit distributions, as $n\to\infty$,
of the zeros of the scaled polynomials $P_{n}$ and $Q_{n}$ defined by
\begin{equation}\label{scaled}
P_{n}(z)=p_{n}(2nz),\quad Q_{n}(z)=q_{n}(2nz).
\end{equation}
For that, we need to introduce critical
trajectories of the quadratic differential $(z^{2}+1)z^{-2}dz^{2}$,
defined by the condition
\begin{equation}
\label{crit} \Re\int_i^z\frac{(\sqrt{s^2+1})_+}{s}ds=0.
\end{equation}
In (\ref{crit}) we assume that the square root has a branch cut
along the path of integration and behaves like $z$ at infinity. By
$(\sqrt{s^2+1})_+$ we denote the $+$ boundary value of the square
root on that path of integration. An explicit integration of the
differential form in the integral actually shows that condition
(\ref{crit}) can be rewritten in the equivalent form
$\Re\left(\eta(z)\right)=0$, with $\eta(z)$ the expression in the
right-hand side of (\ref{crit-eq}).

From the discussion in \cite{W}, it follows that there are four
critical trajectories, see Figure \ref{traject}.
\begin{figure}
\center
\def\svgwidth{11cm}
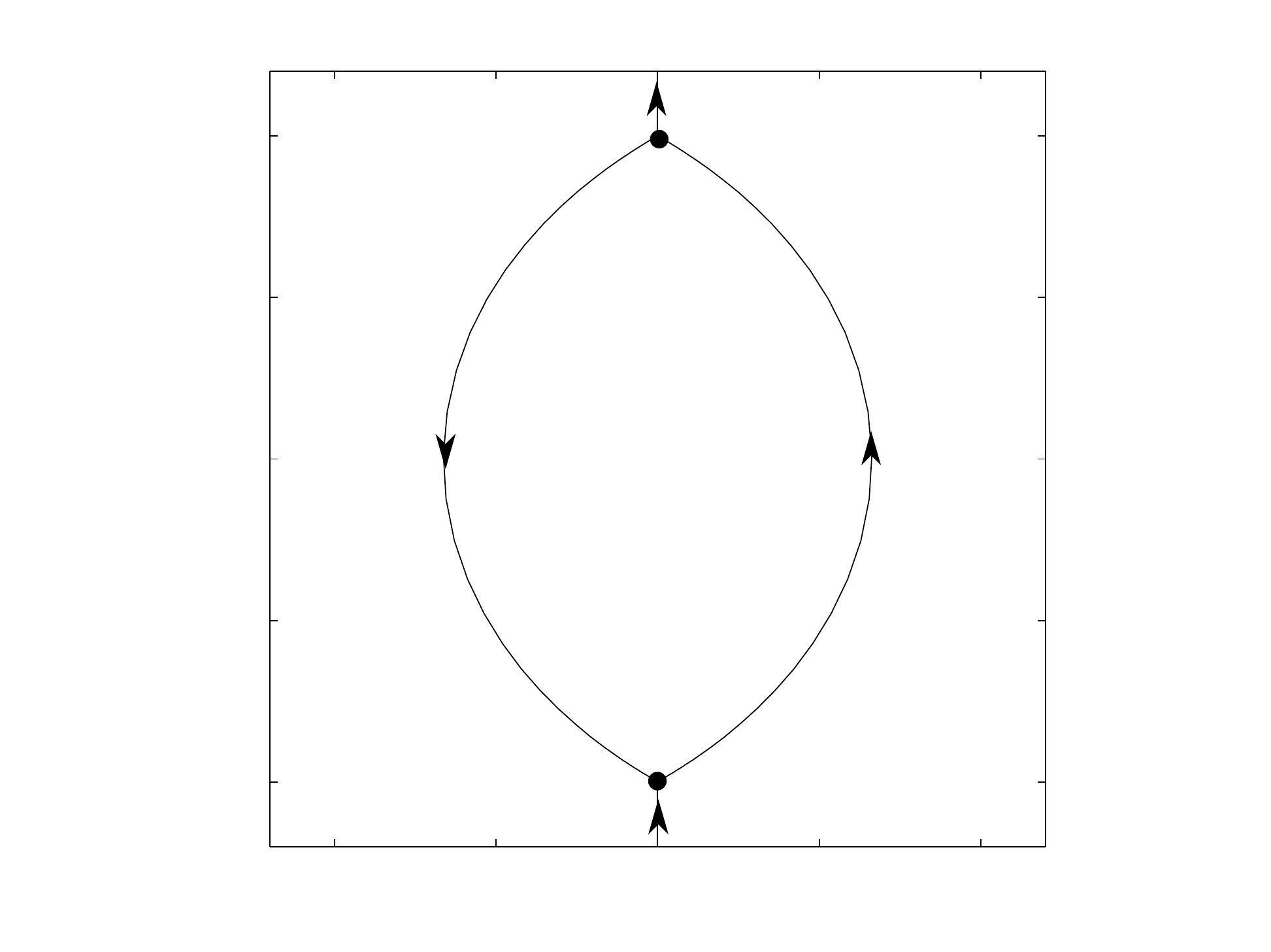
\caption{Critical trajectories satisfying (\ref{crit}) and the
domains $D_{0}$, $D_{1,\infty}$, $D_{2,\infty}$.}
\label{traject}
\end{figure}
We define $\gamma_1$ to be the critical trajectory connecting $i$
with $-i$ in the left half of the complex plane; the other critical
trajectories are the mirror image $\gamma_2$ of $\gamma_1$ with
respect to the imaginary axis, and the vertical half-lines $(\pm
i,\pm i \infty)$. These curves determine three domains that we
denote by $D_{1,\infty}$, $D_{0}$ and $D_{2,\infty}$ as in Figure
\ref{traject}.

Next, we define two positive measures respectively supported on the
curves $\gamma_{1}$ and $\gamma_{2}$, namely
\begin{equation}d\mu_{P}=\frac{1}{i\pi}\frac{(\sqrt{s^{2}+1})_{+}}{s}ds,\quad
d\mu_{Q}=\frac{1}{i\pi}\frac{(\sqrt{(-s)^{2}+1})_{+}}{s}ds,\label{positive
measures}\end{equation}
and, for a polynomial $p$ of degree $n$, we denote by $\nu_{p}$ the
normalized zero counting measure
\begin{equation*}\nu_{p}=\frac1n\sum_{p(z)=0}\delta_{z}.\end{equation*}
Then, the following theorem holds true.
\begin{theorem}
\label{weak-lim} As $n$ tends to infinity, we have
\begin{equation} \label{convnuPQn}
    \nu_{P_n} \stackrel{*}{\to} \mu_P, \qquad
   \nu_{Q_n} \stackrel{*}{\to} \mu_Q,
\end{equation}
where the convergence is in the sense of weak-$^*$ convergence of
measures.
\end{theorem}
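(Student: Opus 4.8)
\medskip\noindent\emph{Proof strategy.}
The plan is to extract strong asymptotics for the scaled polynomials $P_n$ and $Q_n$ from the Riemann--Hilbert analysis underlying Theorem~\ref{main thm}, and then to convert the convergence of the zero counting measures into a convergence of logarithmic potentials. The steepest descent analysis is organized around a $g$-function attached to the measures $\mu_P,\mu_Q$ of \eqref{positive measures}, and the arcs $\gamma_1,\gamma_2$ together with the condition $\Re(\eta(z))=0$ (with $\eta$ as in \eqref{crit-eq}) are precisely the data singling these measures out; this is the content of the critical trajectory discussion imported from \cite{W}. Reading the chain of transformations backwards while keeping track of the scaling $z\mapsto 2nz$, one obtains, uniformly on compact subsets of $\C\setminus\gamma_1$ and of $\C\setminus\gamma_2$ that avoid the branch points $\pm i$,
\begin{equation*}
P_n(z)=\kappa_n^P\,\phi_n^P(z)\,e^{n\Lambda_P(z)}\bigl(1+\O(n^{-\alpha})\bigr),\qquad
Q_n(z)=\kappa_n^Q\,\phi_n^Q(z)\,e^{n\Lambda_Q(z)}\bigl(1+\O(n^{-\alpha})\bigr),
\end{equation*}
where $\kappa_n^{P},\kappa_n^{Q}$ are nonzero constants, $\phi_n^{P},\phi_n^{Q}$ are built from the global parametrix and the ratio matrix $R=I+\O(n^{-\alpha})$ and hence stay bounded away from $0$ and $\infty$ there, and $\Lambda_P,\Lambda_Q$ are analytic with $\Re\Lambda_P(z)=\int\log|z-s|\,d\mu_P(s)+c_P$, $\Re\Lambda_Q(z)=\int\log|z-s|\,d\mu_Q(s)+c_Q$ for constants $c_P,c_Q$ (the factors $e^{\pm z/2}$ in $e_{n_1,n_2}$ being absorbed in the transformations and leaving, as one checks from the explicit form of $\eta$, no linear term in these real parts). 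Near $\pm i$ the local Airy/Bessel parametrices confine any remaining zeros to arbitrarily small neighborhoods of the branch points; moreover the analysis gives $\deg p_n=n-\O(1)$, $\deg q_n=n-\O(1)$, and places all zeros of $p_n$, resp.\ $q_n$, in a fixed compact neighborhood of the bounded arc $\gamma_1$, resp.\ $\gamma_2$. Consequently $\nu_{P_n}$ and $\nu_{Q_n}$ have total mass tending to $1$ and uniformly bounded supports.

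Granting this, the displayed asymptotics give $\tfrac1n\log|P_n(z)|\to U_P(z):=\int\log|z-s|\,d\mu_P(s)+c_P$ locally uniformly on $\C\setminus\gamma_1$, and similarly $\tfrac1n\log|Q_n(z)|\to U_Q(z):=\int\log|z-s|\,d\mu_Q(s)+c_Q$ on $\C\setminus\gamma_2$. Since the subharmonic functions $z\mapsto\tfrac1n\log|P_n(z)|$ are locally uniformly bounded above on $\C$ and do not tend to $-\infty$, this convergence also holds in $L^1_{\mathrm{loc}}(\C)$ to $U_P$; applying the distributional Laplacian and using $\nu_{P_n}=\tfrac1{2\pi n}\Delta\log|P_n|$ together with $\tfrac1{2\pi}\Delta U_P=\mu_P$ gives $\nu_{P_n}\stackrel{*}{\to}\mu_P$, and the uniform control of masses and supports upgrades this to the weak-$^*$ convergence asserted in \eqref{convnuPQn}. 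The same reasoning applied to $Q_n$ yields $\nu_{Q_n}\stackrel{*}{\to}\mu_Q$. (Equivalently one may argue via Helly's selection theorem and the unicity theorem of potential theory: every weak-$^*$ limit $\nu$ of a subsequence of $(\nu_{P_n})$ is a probability measure on $\gamma_1$ whose logarithmic potential agrees with that of $\mu_P$ off $\gamma_1$, hence quasi-everywhere on $\gamma_1$, so $\nu=\mu_P$.)

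The substantive part is the first step: upgrading the asymptotic information from the regime $z=\O(1)$, in which Theorem~\ref{main thm} is stated, to the scale $z=\O(n)$ on which the zeros of $p_n$ and $q_n$ actually lie, with uniformity right up to shrinking neighborhoods of $\pm i$. In practice this amounts to running the Deift--Zhou transformations with the error bounds already established for Theorem~\ref{main thm} and reading $P_n,Q_n$ off the first row of the transformed matrix in the outer region; the identification of the constants $c_P,c_Q$ and the verification that $\Re\Lambda_P,\Re\Lambda_Q$ carry no linear term is a bounded computation with $\eta$. A more classical alternative would be to start from a Hermite-type contour integral representation of $e_{n_1,n_2}$, split it to recover $p_n$ and $q_n$ separately, and carry out a saddle point analysis whose saddles sit on $\gamma_1\cup\gamma_2$; this reproduces the same estimates but is less natural within the present Riemann--Hilbert framework. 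Once the scaled strong asymptotics are in hand, the passage to the limiting distributions is routine.
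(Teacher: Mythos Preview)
Your approach is essentially that of the paper: extract strong asymptotics for $P_n$ and $Q_n$ from the steepest descent analysis (the paper's Proposition~\ref{strong}) and deduce the weak-$^*$ limits via logarithmic potentials. Two small remarks. First, the exponent in the strong asymptotics is the $n$-dependent $g$-function $g_n$, not a fixed $\Lambda_P$; one cannot absorb $e^{n(g_n-g)}$ into the bounded prefactor since $n(g_n-g)$ need not stay bounded when $\alpha<1$, so the passage $\tfrac1n\log|P_n|\to\Re g$ requires the separate observation $g_n\to g$, which the paper makes explicit. Second, for the final identification of the limit measure the paper uses precisely your parenthetical alternative (Rouch\'e to localize the zeros, then Helly's selection theorem plus the unicity theorem from \cite{SaTo}) rather than the distributional Laplacian route; and for $Q_n$ it simply invokes the symmetry $z\mapsto -z$ of the interpolation problem instead of working out the direct asymptotics, which sidesteps the region-dependence of \eqref{asympQn} and the ``no linear term'' bookkeeping you allude to.
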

The structure of the paper is as follows.
In Section 2, we display a few basic properties of the rational
interpolants and characterize them in terms of the solution of a
specific matrix Riemann-Hilbert (RH) problem. In Section 3, we introduce
different functions that are useful for the steepest descent
analysis of the RH problem, which we perform in Section 4. From this
analysis we derive in Section 5 our main results. Finally, in
Section 6, we present numerical experiments in the case of
interpolation points of order $n$.
\section{Rational interpolants and a Riemann-Hilbert problem}
As said before, polynomials $p_{n}$ and $q_{n}$ such that
(\ref{degree-diag})--(\ref {interpolation-diag}) hold true always
exist. About uniqueness, we have the following simple proposition.
\begin{proposition}\label{irreduc}
The irreducible form of the rational function $p_{n}/q_{n}$,
associated to any pair of polynomials $(p_{n},q_{n})$ satisfying
(\ref{degree-diag})--(\ref{interpolation-diag}), is unique.
\end{proposition}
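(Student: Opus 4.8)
The plan is to show that any two pairs $(p_n,q_n)$ and $(\tilde p_n,\tilde q_n)$ satisfying (\ref{degree-diag})--(\ref{interpolation-diag}) give rise to the \emph{same} rational function, $p_n/q_n=\tilde p_n/\tilde q_n$; since the reduction of a rational function to lowest terms is unique up to a common nonzero constant factor (fixed, say, by the normalization $q_n(0)=1$ used later), the proposition follows at once. First I would dispose of a degenerate case: neither $q_n$ nor $\tilde q_n$ can be the zero polynomial, for if $q_n\equiv 0$ then (\ref{interpolation-diag}) reads $p_n(z)e^{-z/2}=\bigO(\omega_{2n+1}(z))$, forcing $\omega_{2n+1}\mid p_n$; but $\deg p_n\le n<2n+1=\deg\omega_{2n+1}$, so $p_n\equiv 0$ too, contradicting nontriviality. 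Hence $p_n/q_n$ and $\tilde p_n/\tilde q_n$ are genuine rational functions.

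The key step is to form the cross-combination of the two error functions. Writing $e_n(z)=p_n(z)e^{-z/2}+q_n(z)e^{z/2}$ and $\tilde e_n(z)=\tilde p_n(z)e^{-z/2}+\tilde q_n(z)e^{z/2}$, a direct expansion gives
\[
q_n(z)\,\tilde e_n(z)-\tilde q_n(z)\,e_n(z)=\bigl(q_n(z)\tilde p_n(z)-\tilde q_n(z)p_n(z)\bigr)e^{-z/2},
\]
the coefficients of $e^{z/2}$ cancelling identically. By (\ref{interpolation-diag}) both $e_n$ and $\tilde e_n$ are $\bigO(\omega_{2n+1}(z))$ as $z\to z_j^{(2n)}$ for every $j$, and multiplying by the polynomials $\tilde q_n$, $q_n$ — which are analytic at every finite point — does not decrease the order of vanishing; hence the left-hand side, and therefore $\bigl(q_n\tilde p_n-\tilde q_n p_n\bigr)e^{-z/2}$, vanishes at each interpolation node to at least the multiplicity with which that node occurs in the list. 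Since $e^{-z/2}$ is entire and nowhere zero, the polynomial $q_n\tilde p_n-\tilde q_n p_n$ is divisible by $\omega_{2n+1}$.

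To finish, observe that $\deg\bigl(q_n\tilde p_n-\tilde q_n p_n\bigr)\le 2n$ by the degree bounds in (\ref{degree-diag}), whereas $\deg\omega_{2n+1}=2n+1$; a polynomial of degree at most $2n$ divisible by one of degree $2n+1$ must be identically zero, so $q_n\tilde p_n=\tilde q_n p_n$, i.e. $p_n/q_n=\tilde p_n/\tilde q_n$. This argument is essentially linear-algebraic, and I do not expect a real obstacle; the only points needing a little care are the confluent case of multiple interpolation points — handled by interpreting $\bigO(\omega_{2n+1}(z))$ as vanishing to the prescribed order and noting that multiplication by an analytic factor preserves it — and the preliminary check that the denominators $q_n,\tilde q_n$ are nonzero so that the quotients make sense.
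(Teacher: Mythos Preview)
Your proof is correct and in fact cleaner than the paper's. The paper first strips common factors from each pair to obtain coprime forms $(p_n',q_n')$ and $(\tilde p_n',\tilde q_n')$; it then uses coprimality to argue that $q_n'$ cannot vanish at the surviving interpolation nodes, divides by $q_n'$ and $\tilde q_n'$, and compares the two quotients on the intersection $J_1\cap J_2$ of the surviving node sets, with a bookkeeping inequality $|J_1\cap J_2|\ge 2n+1-d_1-d_2$ against a degree bound $2n-d_1-d_2$. Your cross-combination $q_n\tilde e_n-\tilde q_n e_n$ kills the $e^{z/2}$ term algebraically, so no division is needed, no interpolation conditions are lost, and the full count $2n+1$ is pitted directly against the degree bound $2n$. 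This is shorter and avoids the detour through coprime reductions; the paper's route gains nothing essential here, though its explicit handling of common factors foreshadows the normality discussion in Corollary~\ref{normal}.
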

\begin{proof}
Consider two pairs $(p_{n},q_{n})$ and $(\widetilde p_{n},\widetilde
q_{n})$ satisfying (\ref {degree-diag})--(\ref{interpolation-diag}).
By taking away possible common factors, we get two pairs, each with
coprime polynomials $(p_{n}',q_{n}')$ and $(\widetilde
p_{n}',\widetilde q_{n}')$,
\begin{equation*}\max(\deg p_{n}',\deg q_{n}')= n-d_{1},\quad \max (\deg\widetilde p_{n}',
\deg\widetilde q_{n}'))= n-d_{2}\end{equation*} and sets $J_1, J_2$
such that
\begin{eqnarray*}
p_{n}'(z)+q_{n}'(z)e^{z} & =\bigO\left(\prod_{j\in
J_{1}}(z-z_j^{(2n)})\right),\quad
J_{1}\subset\{0,\ldots,2n\},\quad |J_{1}|\geq 2n+1-d_{1},\\
\widetilde p_{n}'(z)+\widetilde q_{n}'(z)e^{z} &
=\bigO\left(\prod_{j\in J_{2}}(z-z_j^{(2n)}) \right),\quad
J_{2}\subset\{0,\ldots,2n\},\quad |J_{2}|\geq 2n+1-d_{2}.
\end{eqnarray*}
Since $q_{n}'$ (resp. $\widetilde q_{n}'$) does not vanish at
$z_{j}^{(2n)}$, $j\in J_{1}$ (resp. $z_{j}^{(2n)}$, $j\in J_{2}$),
we may divide the above relations by $q_{n}'$ and $ \widetilde
q_{n}'$ respectively, and deduce that
\begin{equation*}p_{n}'(z)\widetilde q_{n}'(z)-\widetilde p_{n}'(z)q_{n}'(z)=\bigO\left(\prod_{j
\in J_{1}\cap J_{2}}(z-z_j^{(2n)})\right),\quad |J_{1}\cap
J_{2}|\geq 2n+1-d_{1}-d_{2}.
\end{equation*}
Since the degree of the polynomial in the left-hand side is at most
$2n-d_{1}-d_{2}$, we may conclude that $p'_{n}/q'_{n}=\widetilde
p'_{n}/\widetilde q'_{n}$, which implies uniqueness of the
irreducible form of the rational function $p_{n}/q_{n}$ as asserted.
\end{proof}
Throughout, we will use the scaled interpolation points
\begin{equation*}\widehat
z_j^{(2n)}:=\frac{z_j^{(2n)}}{2n},\qquad j=0, \ldots,
2n.\end{equation*} Note that, by assumption (\ref{nbound}), we have
\begin{equation*}|\widehat z_{j}^{(2n)}|\leq c\frac{n^{-\alpha}}{2},\qquad j=0, \ldots, 2n.\end{equation*}
Our main object of study will be the following
Riemann-Hilbert problem.
\subsubsection*{RH problem for $Y$}
Find a $2\times 2$ matrix-valued function $Y=Y^{(2n)}:\mathbb
C\setminus\Gamma_n\to \mathbb C^{2\times 2}$, with $ \Gamma_n$ a
counterclockwise oriented closed curve surrounding the scaled
interpolation points $\widehat z_j^{(2n)}$, $j=0, \ldots, 2n,$ such
that
\begin{itemize}
\item[(a)] $Y$ is analytic in $\mathbb C\setminus\Gamma_n$,
\item[(b)] $Y$ has continuous boundary values $Y_+$ ($Y_-$) when
approaching $\Gamma_n$ from the inside (outside) of $\Gamma_n$, and
they are related by the multiplicative jump condition
\begin{equation}\label{RHP Y:b}
Y_+(z)=Y_-(z)\begin{pmatrix}1&e^{-nV_n(z)}\\0&1\end{pmatrix},
\end{equation}
with
\begin{equation}\label{Vn}
V_n(z)=2z+\frac{1}{n}\sum_{j=0}^{2n}\log(z-\widehat z_j^{(2n)}),
\end{equation}
\item[(c)] we have
\begin{equation}\label{RHP Y:c}Y(z)z^{-n\sigma_3}=I+\bigO(z^{-1}),\qquad
\mbox{ as $z\to\infty$},
\end{equation}
where $\sigma_{3}=\begin{pmatrix}1 & 0\\0 & -1\end{pmatrix}$ is the
third Pauli matrix.
\end{itemize}

A motivation for the choice (\ref{Vn}) of the potential $V_{n}$
comes from the fact that
\begin{equation}\label{orthog}
\int_{\Gamma_n}z^{j}P_{n}(z)\frac{e^{-2nz}}
{\Omega_{n}(z)}dz=0,\quad j=0,\ldots,n-1,
\end{equation}
where we set
\begin{equation*}P_{n}(z)=p_{n}(2nz),\qquad\Omega_{n}(z)=\prod_{j=0}^{2n}\left(z-
\widehat z_{j}^{(2n)} \right).\end{equation*} These relations easily
follow from (\ref{degree-diag})-(\ref{interpolation-diag}) and
Cauchy formula. They can be interpreted as orthogonality relations
for the polynomial $P_{n}(z)$ on the contour $ \Gamma_n$ with
respect to the varying weight $e^{-nV_{n}(z)}$.

Next, we prove a proposition concerning existence and uniqueness of
a solution to the RH problem, and relate this solution to the
scaled polynomials $P_n, Q_n$ defined in (\ref{scaled}),
and the scaled remainder term
\begin{equation*}E_{n}(z)=P_{n}(z)e^{-nz}+Q_{n}(z)e^{nz}=\bigO (\Omega_{n}(z)).\end{equation*}
\begin{proposition}\label{prop RHP} The following assertions hold true:\\
(i) There is at most one solution to the RH problem for $Y$. For $n$
large enough, a
solution $Y$ exists.\\
(ii) Let $n\in\mathbb N$ and $z_j^{(2n)}\in\mathbb C$ for $j=0,
\ldots, 2n$. If the RH problem for $Y$ has a solution, and if we
write
\begin{align}
&\label{pn}p_{n}(2nz):=Y_{11}(z), &\mbox{for $z\in\mathbb C\setminus\Gamma_n$},\\
&\label{qn}q_n(2nz):=\Omega_{n}(z)Y_{12}(z),
& \mbox{for $z$ outside of $\Gamma_n$},
\end{align}
then $p_n, q_n$ are polynomials with $\deg p_{n}=n$, $\deg q_{n}\leq
n$, and they satisfy
the interpolation conditions (\ref{interpolation-diag}).\\
(iii) Assume $(p_{n},q_{n})$ is a pair satisfying
(\ref{degree-diag})-(\ref{interpolation-diag}) with
\begin{equation*}\deg p_{n}=n,\quad\deg q_{n}\leq n,\end{equation*}
and $(p_{n-1},q_{n+1})$ is a pair satisfying
(\ref{degree})-(\ref{interpolation}) with
\begin{equation*}\deg p_{n-1}\leq n-1,\quad\deg q_{n+1}= n+1.\end{equation*}
Assume also that the normalizations of $p_{n}$ and $q_{n+1}$ are
chosen so that $P_ {n}(z)=p_{n}(2nz)$ and $q_{n+1}(2nz)$ are monic
polynomials. Then,
\begin{align} \label{eq:Yout}
  Y(z)  & =
         \begin{pmatrix}
         P_{n}(z) & \Omega_n^{-1}(z) Q_{n}(z) \\[10pt]
         p_{n-1}(2nz) & \Omega_n^{-1}(z) q_{n+1}(2nz)
                  \end{pmatrix}, & z\text{ outside  }\Gamma_n,\\[10pt]
\label{eq:Yin}
  Y(z)  & =  \begin{pmatrix}
 P_{n}(z) &  \Omega_n^{-1}(z) e^{-nz}E_{n}(z)\\[10pt]
         p_{n-1}(2nz) &  \Omega_n^{-1}(z)e^{-nz}e_{n-1,n+1}(2nz)
         \end{pmatrix}, & z\text{  inside }\Gamma_n,
\end{align}
solves the RH problem.
\end{proposition}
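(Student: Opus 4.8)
The plan is to treat the assertions in the order: uniqueness from (i), then (ii), then (iii), and to return to the existence part of (i) at the end. For \emph{uniqueness}, note that since the jump matrix in (\ref{RHP Y:b}) has determinant one, $\det Y$ extends across $\Gamma_n$ to an entire function which, by (\ref{RHP Y:c}), tends to $1$ at infinity; hence $\det Y\equiv1$ by Liouville, and any solution is invertible on $\C\setminus\Gamma_n$. If $Y$ and $\widetilde Y$ both solve the problem, then $\widetilde Y\,Y^{-1}$ has no jump on $\Gamma_n$, is entire, and tends to $I$ at infinity, whence $\widetilde Y\,Y^{-1}\equiv I$, again by Liouville.

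For \emph{assertion (ii)}, assume a solution $Y$ exists. Its first column is continuous across $\Gamma_n$, hence entire, and comparison with (\ref{RHP Y:c}) together with Liouville forces $Y_{11}$ to be a monic polynomial of degree exactly $n$ and $Y_{21}$ a polynomial of degree at most $n-1$; thus $p_n(2nz):=Y_{11}(z)=P_n(z)$ is a polynomial of degree $n$. For $Y_{12}$, the jump condition reads $Y_{12,+}-Y_{12,-}=Y_{11}e^{-nV_n}=P_n(s)e^{-2ns}/\Omega_n(s)$ on $\Gamma_n$ (using $e^{-nV_n(z)}=e^{-2nz}/\Omega_n(z)$), while (\ref{RHP Y:c}) gives $Y_{12}(z)=\bigO(z^{-n-1})$ at infinity; by Plemelj,
\[
Y_{12}(z)=\frac{1}{2\pi i}\int_{\Gamma_n}\frac{P_n(s)e^{-2ns}}{\Omega_n(s)}\,\frac{ds}{s-z}.
\]
Expanding $1/(s-z)$ at infinity, the decay $\bigO(z^{-n-1})$ is equivalent to the vanishing of the first $n$ moments, that is, to the orthogonality relations (\ref{orthog}). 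Writing $\Omega_n(z)/(s-z)=(\Omega_n(z)-\Omega_n(s))/(s-z)+\Omega_n(s)/(s-z)$, for $z$ outside $\Gamma_n$ the last summand integrates to zero by Cauchy's theorem, so $\Omega_n(z)Y_{12}(z)$ is a polynomial in $z$; invoking (\ref{orthog}) to kill the powers $z^{k}$ with $k>n$ coming from $(\Omega_n(z)-\Omega_n(s))/(s-z)$ shows its degree is at most $n$, so $q_n(2nz):=\Omega_n(z)Y_{12}(z)$ defines $q_n$ with $\deg q_n\le n$. Finally the jump condition also gives $Y_{12,\mathrm{in}}=Y_{12,\mathrm{out}}+P_ne^{-nV_n}$, so multiplying by $\Omega_n$,
\[
\Omega_n(z)Y_{12,\mathrm{in}}(z)=q_n(2nz)+P_n(z)e^{-2nz}=e^{-nz}\bigl(P_n(z)e^{-nz}+q_n(2nz)e^{nz}\bigr);
\]
the left side is analytic inside $\Gamma_n$, hence $\bigO(\Omega_n(z))$ near each $\widehat z_j^{(2n)}$, and therefore so is $P_n(z)e^{-nz}+q_n(2nz)e^{nz}$, which is the scaled form of (\ref{interpolation-diag}).

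For \emph{assertion (iii)}, we verify directly that the matrix in (\ref{eq:Yout})--(\ref{eq:Yin}) solves the RH problem. Its first-column entries are polynomials, hence entire; its second-column entries are analytic outside $\Gamma_n$ (where $\Omega_n$ has no zeros) and analytic inside $\Gamma_n$ because, by the interpolation conditions, $E_n(z)$ and $e_{n-1,n+1}(2nz)$ are $\bigO(\Omega_n(z))$ near each zero of $\Omega_n$ — this is (a). For (b), the first columns of (\ref{eq:Yout}) and (\ref{eq:Yin}) coincide, matching the first column $(1,0)^{T}$ of the jump matrix, and for the second column the substitutions $e^{-nV_n(z)}=e^{-2nz}/\Omega_n(z)$, $E_n(z)=P_n(z)e^{-nz}+Q_n(z)e^{nz}$ and $e_{n-1,n+1}(2nz)=p_{n-1}(2nz)e^{-nz}+q_{n+1}(2nz)e^{nz}$ turn the two required relations into algebraic identities. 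For (c), from (\ref{eq:Yout}): $P_n$ monic of degree $n$ and $\deg p_{n-1}\le n-1$ give the first column of $Y(z)z^{-n\sigma_3}$ equal to $\bigl(1+\bigO(z^{-1}),\bigO(z^{-1})\bigr)^{T}$, while $\deg\Omega_n=2n+1$, $\deg Q_n\le n$, and the monic normalization of $z\mapsto q_{n+1}(2nz)$ give the second column equal to $\bigl(\bigO(z^{-1}),1+\bigO(z^{-1})\bigr)^{T}$; hence $Y(z)z^{-n\sigma_3}=I+\bigO(z^{-1})$.

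It remains to prove \emph{existence} for $n$ large, which is the main obstacle. By (iii) it suffices to produce, for $n$ large, a pair $(p_n,q_n)$ satisfying (\ref{degree-diag})--(\ref{interpolation-diag}) with $\deg p_n=n$ and a pair $(p_{n-1},q_{n+1})$ satisfying (\ref{degree})--(\ref{interpolation}) with $\deg q_{n+1}=n+1$; equivalently, that certain normalization constants do not vanish. In contrast to the classical Fokas--Its--Kitaev setting with a positive weight, here the weight $e^{-nV_n}$ is complex-valued on $\Gamma_n$, so this non-degeneracy is not automatic; it follows, for $n$ large, from the Deift--Zhou steepest descent analysis carried out in the later sections — deform $\Gamma_n$ onto the critical trajectories of Figure \ref{traject}, construct global and local parametrices, and conclude by a small-norm argument that a solution $Y$ exists for all large $n$, the polynomials being recovered through (\ref{pn})--(\ref{qn}). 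Apart from this, everything is routine: uniqueness and the verification in (iii) are immediate, and (ii) needs only the partial-fraction bookkeeping indicated above.
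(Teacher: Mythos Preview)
Your proof is correct and follows essentially the same approach as the paper: Liouville for uniqueness, the Cauchy/Plemelj integral for the second column in (ii), direct verification for (iii), and deferral of existence to the steepest-descent analysis of Section~\ref{descent}. The only cosmetic difference is that the paper evaluates the integral for $Y_{12}$ outside $\Gamma_n$ by residues at the points $\widehat z_j^{(2n)}$ (obtaining $\Omega_n^{-1}Q_n$ directly and then bounding $\deg Q_n$ from the decay at infinity), whereas you use the decomposition $\Omega_n(z)/(s-z)=(\Omega_n(z)-\Omega_n(s))/(s-z)+\Omega_n(s)/(s-z)$ together with the orthogonality relations~(\ref{orthog}); both routes are standard and yield the same conclusion.
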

\begin{proof}
Uniqueness of a solution $Y$ to the RH problem follows easily from
Liouville's theorem, implying first that $\det Y(z)=1$ everywhere in
the complex plane, and second, that the product $\widetilde
YY^{-1}(z)$, where $\widetilde Y$ is another solution, can only
equal $I$, the identity matrix, since $\widetilde YY^{-1}(z)$ has no
jump and behaves like $I+ \bigO (1/z)$ at infinity. The fact that a
solution exists for $n$ large is a consequence of the steepest
descent analysis to be done in Section \ref{descent}.

We now show assertion (ii). The first column of the jump matrix in
(\ref{RHP Y:b}) is $\begin{pmatrix}1\\0\end{pmatrix}$, so $Y_{11}$
and $Y_{21}$ have no jump across $\Gamma_n$, and they are entire
functions. The asymptotic condition (c) of the RH problem tells us
that $Y_{11}$ is a monic polynomial of degree $n$, which we denote
by $P_{n}$, and that $Y_{21}$ is a polynomial of degree at most
$n-1$, which we denote by $\widehat p_{n-1}$. From the jump relation
(\ref{RHP Y:b}), it follows that
\begin{equation*}
Y_{12}(z)=\frac{1}{2\pi i}\int_{\Gamma_n}
\frac{P_{n}(s)e^{-nV_n(s)}}{s-z}ds.
\end{equation*}
Calculating the integrals using residue arguments and the precise
form (\ref{Vn}) of $V_n $, we find, for $z$ outside $\Gamma_n$, that
$Y_{12}$ is of the form
\begin{equation*}
Y_{12}(z)=\Omega_{n}^{-1}(z)Q_{n}(z)
\end{equation*}
where $Q_{n}$ is a polynomial which has to be of degree at most $n$
because of (\ref{RHP Y:c}). Similarly, we find that $Y_{22}$ is of
the form
\begin{equation*}
Y_{22}(z)=\Omega_{n}^{-1}(z)\widehat q_{n+1}(z),
\end{equation*}
with $\widehat q_{n+1}$ a monic polynomial of degree $n+1$. From the
jump relation (\ref{RHP Y:b}), it then follows that
\begin{align}
&\label{interpol1}Y_{12}(z)=e^{-nV_n(z)}\left(P_{n}(z)+e^{2nz}Q_{n}(z)
\right),\\
&\label{interpol2}Y_{22}(z)=e^{-nV_n(z)}\left(\widehat
p_{n-1}(z)+e^{2nz}\widehat q_{n+1} (z)\right),
\end{align}
for $z$ inside $\Gamma_n$. Since $Y_{12}$ is analytic inside
$\Gamma_n$, it follows from (\ref{interpol1}) and the definition
(\ref{Vn}) of $V_{n}$ that the polynomials
$p_{n}(z)=P_n(\frac{z}{2n})$ and $q_{n}(z)=Q_n(\frac{z}{2n})$
satisfy the interpolation conditions (\ref{interpolation-diag}), as
asserted.

Assertion (iii) is easily checked. We leave the details to the
reader.
\end{proof}
\begin{corollary}\label{normal}
For $n$ large enough, polynomials $p_{n}$ and $q_{n}$ satisfying
(\ref{degree-diag})-(\ref {interpolation-diag}) exist, are unique up
to a normalization constant, and the interpolation problem is normal
in the sense that the polynomials $p_{n}$ and $q_{n}$ have full
degrees,
\begin{equation*}\deg p_{n}=n,\qquad\deg q_{n}=n.\end{equation*}
\end{corollary}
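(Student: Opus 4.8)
The plan is to obtain existence directly from the Riemann--Hilbert problem, to deduce uniqueness (up to a constant) from the Bézout-type identity coming from $\det Y\equiv1$ together with Proposition~\ref{irreduc}, and to read off the exact degree of $q_n$ from the large-$n$ behaviour of the RH solution.

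\textbf{Existence.} For $n$ large, Proposition~\ref{prop RHP}(i) provides a solution $Y$ of the RH problem for $Y$ (this is the point where the steepest descent analysis of Section~\ref{descent} is needed), and Proposition~\ref{prop RHP}(ii) turns it into a pair $(p_n,q_n)$ satisfying (\ref{degree-diag})--(\ref{interpolation-diag}) with $\deg p_n=n$ and $\deg q_n\le n$; since (\ref{degree-diag}) only asks for degrees at most $n$, this settles existence. The same $Y$ also furnishes, through $Y_{21}$ and $Y_{22}$, a pair $(\widehat p_{n-1},\widehat q_{n+1})$ with $\deg\widehat p_{n-1}\le n-1$ and $\widehat q_{n+1}$ monic of degree $n+1$ satisfying the interpolation conditions, and $\det Y\equiv1$ (established in the proof of Proposition~\ref{prop RHP}) gives, outside $\Gamma_n$,
\[
P_n(z)\,\widehat q_{n+1}(z)-Q_n(z)\,\widehat p_{n-1}(z)=\Omega_n(z),\qquad P_n(z)=p_n(2nz),\ \ Q_n(z)=q_n(2nz).
\]

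\textbf{Uniqueness.} Let $(\widetilde p_n,\widetilde q_n)$ be another pair satisfying (\ref{degree-diag})--(\ref{interpolation-diag}). I would first check that $p_n$ and $q_n$ are coprime: a common zero $\zeta$ of $P_n$ and $Q_n$ would, by the identity above, be a zero of $\Omega_n$, hence one of the scaled interpolation points $\widehat z_j^{(2n)}$, which satisfy $|\widehat z_j^{(2n)}|\le cn^{-\alpha}/2\to0$; but the steepest descent analysis of Section~\ref{descent} shows that for $n$ large the zeros of $P_n$ accumulate on $\gamma_1$ and, in particular, stay outside a fixed neighbourhood of the origin (compare Theorem~\ref{weak-lim}), a contradiction. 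Hence $\gcd(p_n,q_n)=1$. By Proposition~\ref{irreduc}, $p_n/q_n$ and $\widetilde p_n/\widetilde q_n$ have the same irreducible form, which, $p_n$ and $q_n$ being coprime, is $p_n/q_n$ itself; thus $p_n\widetilde q_n=\widetilde p_n q_n$, and coprimality forces $p_n\mid\widetilde p_n$. Since $\deg\widetilde p_n\le n=\deg p_n$, we conclude $\widetilde p_n=c\,p_n$ and then $\widetilde q_n=c\,q_n$, i.e.\ uniqueness up to a normalization constant.

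\textbf{Full degree, and the main obstacle.} Here $\deg p_n=n$ is already part of Proposition~\ref{prop RHP}(ii), while $\deg q_n=n$ is equivalent, by (\ref{qn}) and the asymptotic normalization (\ref{RHP Y:c}), to nonvanishing of the $(1,2)$-entry of the coefficient $Y_1$ in the expansion $Y(z)z^{-n\sigma_3}=I+Y_1z^{-1}+\bigO(z^{-2})$. Expanding the Cauchy integral for $Y_{12}$ and invoking the orthogonality relations (\ref{orthog}), that entry equals $-h_n$ with $h_n=\frac{1}{2\pi i}\int_{\Gamma_n}P_n(s)^2e^{-2ns}\Omega_n(s)^{-1}\,ds$ the norming constant of the orthogonal polynomial $P_n$; the steepest descent analysis of Section~\ref{descent} produces the asymptotics of $h_n$ and shows it is asymptotic to the nonzero value attained in the classical Pad\'e case, so $h_n\ne0$ for $n$ large. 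Combined with the uniqueness just proved, this yields $\deg q_n=n$ for every admissible pair. The hard part is exactly this last step together with the coprimality used above: neither can be extracted from the algebraic identity alone, and both rely on the quantitative fact — supplied only by Section~\ref{descent} — that for large $n$ the RH solution is a small perturbation of the classical Pad\'e situation, so that, although this corollary is stated early on, its proof genuinely leans on the later strong-asymptotic analysis.
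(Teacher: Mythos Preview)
Your argument is correct but diverges from the paper's in two places.

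For uniqueness, the paper does not establish coprimality of $(p_n,q_n)$ at all. It argues directly by contradiction with the uniqueness of the RH solution: if a second pair $(\widetilde p_n,\widetilde q_n)$, not proportional to $(p_n,q_n)$, existed, then by Proposition~\ref{irreduc} one could factor out the common irreducible form, replace the cofactor by a different polynomial of the same degree, and thereby manufacture a second solution $\widehat Y$ (with monic $(1,1)$-entry) of the RH problem, contradicting Proposition~\ref{prop RHP}(i). This uses only the \emph{existence and uniqueness} of $Y$, not the finer location of the zeros of $P_n$ that your coprimality step invokes via the B\'ezout identity. Your route is a legitimate alternative and yields the bonus information that $(p_n,q_n)$ is coprime for large $n$; the price is that it leans more heavily on the strong asymptotics of Section~\ref{descent}.

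For $\deg q_n=n$, the paper bypasses any computation of the norming constant. It observes the symmetry that $(p_n(z),q_n(z))$ solves (\ref{degree-diag})--(\ref{interpolation-diag}) for the points $\{z_j^{(2n)}\}$ if and only if $(q_n(-z),p_n(-z))$ solves the same problem for the reflected points $\{-z_j^{(2n)}\}$, which satisfy the identical growth hypothesis (\ref{nbound}). Since $\deg p_n=n$ has already been established for \emph{every} admissible interpolation scheme, applying this to the reflected scheme gives $\deg q_n=n$ immediately. This is considerably shorter and more structural than your route through $(Y_1)_{12}=-h_n$ and its large-$n$ asymptotics, although your argument is also valid and is the standard orthogonal-polynomial mechanism.
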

\begin{proof}
For $n$ large, we know from assertion (i) of Proposition \ref{prop
RHP} that the RH problem admits a solution $Y$ which is unique. By
assertion (ii) of the same proposition, the polynomials $p_{n}$ and
$q_{n}$ defined by (\ref{pn}) and (\ref{qn}) satisfy the
interpolation conditions
(\ref{degree-diag})-(\ref{interpolation-diag}) with $\deg p_{n}=n$.
Assume there exists another pair $(\widetilde p_{n},\widetilde
q_{n})$, not a scalar multiple of the pair $(p_{n},q_{n})$,
satisfying (\ref{degree-diag})-(\ref{interpolation-diag}). Then, by
Proposition \ref{irreduc}, there exists a polynomial $r$, $\deg
r=d$, $0<d<n$, $p_{n}=r\widetilde p_{n}$, $q_{n}=r\widetilde q_{n}$.
By considering any other polynomial $\widetilde r$ of degree $d$, we
get a pair $\widehat p_{n}=\widetilde r\widetilde p_{n}$, $ \widehat
q_{n}=\widetilde r\widetilde q_{n}$ satisfying
(\ref{degree-diag})-(\ref {interpolation-diag}), with $\deg \widehat
p_{n}=n$, different from the pair $(p_{n},q_{n})$. Moreover, without
loss of generality, we may assume that $\widehat p_{n}(2nz)$ is
monic. Then, the matrix $\widehat Y$ defined by
\begin{align*}
  \widehat Y(z)  & =
         \begin{pmatrix}
         \widehat p_{n}(2nz) & \Omega_n^{-1}(z) \widehat q_{n}(2nz) \\[10pt]
         Y_{21}(z) & Y_{22}(z)
                  \end{pmatrix}, & z\text{ outside  }\Gamma_n,\\[10pt]
  Y(z)  & =  \begin{pmatrix}
 \widehat p_{n}(2nz) &  \Omega_n^{-1}(z) e^{-nz}\widehat E_{n}(z)\\[10pt]
         Y_{21}(z) &  Y_{22}(z)
         \end{pmatrix}, & z\text{  inside }\Gamma_n,
\end{align*}
is different from $Y$ and, using assertion (iii) of Proposition
\ref{prop RHP}, we see that it also solves the RH problem. This is a
contradiction with the uniqueness of a solution $Y$. We may then
conclude that, for $n$ large, there exists, up to a multiplicative
constant, a unique pair $(p_{n},q_{n})$ satisfying
(\ref{degree-diag})-(\ref{interpolation-diag}), and also that $\deg
p_{n}=n$. The fact that, for $n$ large, $\deg q_{n}=n$ is a
consequence of the symmetry of our interpolation problem, namely
that the pair $(p_{n} (z),q_{n}(z))$ solves
(\ref{degree-diag})-(\ref{interpolation-diag}) with respect to the
interpolation points $\{z_{i}^{(2n)}\}$ if and only if the pair
$(q_{n}(-z),p_{n}(-z))$ solves
(\ref{degree-diag})-(\ref{interpolation-diag}) with respect to the
interpolation points $\{-z_{i} ^{(2n)}\}$.
\end{proof}

All the necessary ingredients to prove our convergence results,
namely Theorem \ref {main thm} and Theorem \ref{weak-lim}, are
contained in the RH problem for $Y$.
We will perform a rigorous asymptotic analysis of the RH problem for
$Y$ using the Deift/Zhou nonlinear steepest descent method \cite{DZ,
Deift, DKMVZ2, DKMVZ1}.

\medskip

Using this method, we will obtain existence and precise large $n$
asymptotics for the matrix $Y(z)$ everywhere in the complex plane.
This allows us to obtain uniform asymptotics for the polynomials
$P_{n}$ and $Q_{n}$, from which asymptotics for the original
polynomials $p_n$ and $q_n$ follow. Such asymptotics were obtained
in \cite{W} for interpolation points satisfying (\ref{logbound}),
and can be obtained similarly in our more general situation.

\section{Construction of the $g$-function}\label{Constrg}

Assume that, for each $n>0$, a suitable oriented curve
$\gamma_{1,n}$, with endpoints $a_{n}$ and $b_{n}$, is given. We
want to construct a $g$-function $g_{n}(z)$ satisfying the
conditions
\begin{itemize}
\item[(a)] $e^{g_{n}}:\mathbb C\setminus \gamma_{1,n}\to \mathbb C$ is
analytic,
\item[(b)] there exists $\ell_{n}\in\mathbb C$ such that
\begin{equation}
\label{var eq
1b}g_{n,+}(z)+g_{n,-}(z)-2z-\frac{1}{n}\sum_{j=1}^{2n}\log
(z-\widehat z_j^{(2n)})+2\ell_{n}=0, \qquad\mbox{ for
$z\in\gamma_{1,n}$,}
\end{equation}
where a branch of the logarithm is chosen which is analytic on
$\gamma_{1,n}$,
\item[(c)] $g_{n}(z)= \log z+\bigO(1)$ as $z\to\infty$.
\end{itemize}
This function will play an essential role in the asymptotic analysis
of the RH problem for $Y$: it will enable us to transform the RH
problem for $Y$ to a RH problem for $T$ which is normalized at
infinity (i.e.\ $T(z)\to I$ as $z\to\infty$) and which has jump
matrices which are convenient for asymptotic analysis.

\medskip

One can construct such a function $g_{n}$ with properties (a)-(c)
for any smooth curve $\gamma_{1,n}$. Indeed, condition (\ref{var eq
1b}) is equivalent to
\begin{equation}
g_{n,+}'(z)+g_{n,-}'(z)=2+\frac{1}{n}\sum_{j=1}^{2n}\frac{1}{z-\widehat
z_j^{(2n)}},
\end{equation}
and the function
\begin{equation}
g_{n}'(z)=\frac{1}{R_{n}(z)}\left(1+\frac{1}{2\pi
i}\int_{\gamma_{1,n}}\frac{R_{n,+}(s)}{s-z}\left(2+\frac{1}{n}\sum_{j=1}^{2n}\frac{1}{s-
\widehat z_j^{(2n)}}\right)ds\right),
\end{equation}
with
\begin{equation}\label{R}
R_{n}(z)=\left((z-a_{n})(z-b_{n})\right)^{1/2},\qquad z\in\mathbb
C\setminus\gamma_{1,n},
\end{equation}
satisfies this condition together with the asymptotic condition
$g_{n}'(z)\sim\frac{1}{z}$ if the branch of $R_{n}$ is chosen which
is analytic off $\gamma_{1,n}$ and which behaves like $z$ as
$z\to\infty$. After a straightforward integral calculation, we get
\begin{equation}\label{density}
\psi_{n}(z):=-\frac{1}{2\pi
i}(g_{n,+}'(z)-g_{n,-}'(z))=-\frac{h_{n}(z)}{2\pi
iR_{n,+}(z)},\qquad\mbox{for $z\in\gamma_{1,n}$,}
\end{equation}
with \begin{equation}\label{h}
h_{n}(z)=a_{n}+b_{n}-2z+\frac{1}{n}\sum_{j=1}^{2n}\frac{R_{n}(\widehat
z_j^{(2n)})}{\widehat z_j^{(2n)}-z}.\end{equation} The $g$-function
is then the multi-valued function
\begin{equation}\label{gn}
g_{n}(z)=\int_{\gamma_{1,n}}\log(z-s)\psi_{n}(s)ds,
\end{equation}
where the branch cut of the logarithm follows $\gamma_{1,n}$ along
$(z,b_{n})$ and then it goes further to infinity.

\medskip

For the particular case of Pad\'e approximants where all
interpolation points $z_{j}^{(2n)} $, $j=0,\ldots,2n$, are equal to
$0$, the above formulas become independent of $n$ and reduce to
\begin{equation}
g'(z)=1+\frac1z-\frac{z^{2}-\left(\frac{a+b}{2}\right)z+R(0)}{zR(z)},
\end{equation}
and
\begin{equation}\label{densitypsi0}
\psi(z)=-\frac{1}{2\pi i R_+(z)}\left(a+b-2z-\frac{2R(0)}{z}\right),
\end{equation}
with
\begin{equation*}
R(z)=((z-a)(z-b))^{1/2},
\end{equation*}
and $a$ and $b$ two points independent of $n$. Now, recall the curve
$\gamma_{1}$ that was defined before Theorem \ref{weak-lim}. In the
Pad\'e case, we will choose that particular curve $\gamma_{1}$ in
the definition of the $g$-function. Its endpoints $i$ and $-i$
satisfy $\psi(i)=\psi(-i)=0$, and we have
\begin{equation}\label{density0}
\psi(z)=\frac{R_+(z)}{\pi i z},
\end{equation}
so that
\begin{equation}\label{def g0}
g(z)=\frac{1}{\pi
i}\int_{\gamma_1}\log(z-s)\frac{(\sqrt{s^2+1})_+}{s}ds
\end{equation}
becomes a complex logarithmic potential associated to a real
measure, recall (\ref{crit}). Actually, it is a positive measure by
(\ref{positive measures}).
Next, if we take a suitable curve $\widetilde\gamma_2$ connecting
$+i$ with $-i$, lying to the right of $\gamma_2$, we have the
important inequality
\begin{equation}\label{var ineq}
\Re(g_{+}(z)+g_{-}(z)-2(z+\log z)+2\ell)\leq 0, \qquad\mbox{ for
$z\in\widetilde\gamma_2$,}
\end{equation}
which is strict for $z\in\widetilde\gamma_2\setminus\{\pm i\}$, see
\cite[Lemma 2.9]{W}. We denote by $\Gamma$ the closed contour
\begin{equation}\label{Gamma}
\Gamma=\gamma_{1}\cup\widetilde\gamma_{2},
\end{equation}
oriented counterclockwise. We return to the general case of complex
interpolation points. From (\ref{h}), we obtain
\begin{equation}\label{system1}
h_{n}(a_{n}):=b_{n}-a_{n}+\frac{1}{n}\sum_{j=1}^{2n}\frac{\sqrt{\widehat
z_j^{(2n)}-b_{n}}}{\sqrt{\widehat z_j^{(2n)}-a_{n}}},
\end{equation}
and
\begin{equation}\label{system2}
h_{n}(b_{n})=a_{n}-b_{n}+\frac{1}{n}\sum_{j=1}^{2n}\frac{\sqrt{\widehat
z_j^{(2n)}-a_{n}}}{\sqrt{\widehat z_j^{(2n)}-b_{n}}}.\end{equation}
In an ideal situation, we would choose $a_{n}$ and $b_{n}$ in such a
way that $h_{n}(a_{n})=h_{n}(b_{n})=0$, like in the Pad\'e case.
However, for a general set of interpolation points $z_j^{(2n)}$
bounded by (\ref{nbound}), it is sufficient if $h_{n}(a_{n})$ and
$h_{n}(b_{n})$ are small enough. More precisely, we consider the
following rescaling of the interpolation points,
\begin{equation*}
\widetilde
z_{j}^{(2n)}=\frac{z_{j}^{(2n)}}{2\rho_{n}}=\frac{\widehat
z_{j}^{(2n)}}{t}, \qquad t=\frac{\rho_{n}}{n}\leq\frac{c}{n^{\alpha}},
\end{equation*}
where $\rho_{n}$ denotes the maximum of the moduli of the $z_{j}^{(2n)}$, recall (\ref{def-rho}). Hence the points $\widetilde z_{j}^{(2n)}$ remain bounded with
$n$, and we construct $a_{n} $ and $b_{n}$  of the form
\begin{equation}\label{expansion a b}
a_{n}=i(1+\sum_{j=1}^k\alpha_jt^j),\qquad
b_{n}=-i(1+\sum_{j=1}^k\beta_jt^j)
\end{equation}
in such a way that
\begin{equation}\label{condition psi}
h_{n}(a_{n})=\bigO(t^{k+1}), \qquad
h_{n}(b_{n})=\bigO(t^{k+1}),\qquad\text{as }t\to 0,
\end{equation}
with $k$ a sufficiently large integer. We will see in Section
\ref{Const-param} that $k>\frac{1}{2\alpha}$ is sufficient.
Substituting (\ref{expansion a b}) in
(\ref{system1})-(\ref{system2}), and expanding up to $\bigO(t^{k})$
gives us a system of $2k$ equations: the linear terms in $t$ yield
\begin{equation}\label{coeff-1}
\alpha_1= -\frac{i}{2n}\sum_{j=1}^{2n}\widetilde z_j^{(2n)},\qquad
\beta_1= -\frac{i}{2n}\sum_{j=1}^{2n}\widetilde z_j^{(2n)},
\end{equation}
and in general the $\bigO(t^m)$-term in (\ref{system1}) (resp.\
(\ref{system2})) gives an expression for $\alpha_m$ (resp.\
$\beta_m$) in terms of $\widetilde z_j^{(2n)}$ for $j=1, \ldots, 2n$
and in terms of $\alpha_j, \beta_j$ for $j=1, \ldots, m-1$. In
particular, our system of $2k$ equations in the unknowns $\alpha_1,
\ldots, \alpha_k$ and $\beta_1, \ldots, \beta_k$ has always a unique
solution. We can choose $k$ in (\ref{condition psi}) as large as we
want, but, as said above, $k>\frac{1}{2\alpha}$ will be sufficient
for us.
\begin{proposition}\label{conv-an-bn}
The points $a_{n}$ and $b_{n}$ tend to $i$ and $-i$ respectively, as
$n$ tends to infinity.
\end{proposition}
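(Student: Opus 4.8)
The plan is to read off the convergence directly from the explicit construction of $a_n$ and $b_n$. By (\ref{expansion a b}) we have
\[
a_n - i = i\sum_{j=1}^{k}\alpha_j t^j,\qquad b_n + i = -i\sum_{j=1}^{k}\beta_j t^j,
\]
where $t=\rho_n/n\le c\,n^{-\alpha}\to 0$ as $n\to\infty$ and $k$ is a \emph{fixed} integer (chosen once, depending only on $\alpha$). Consequently it suffices to prove that the coefficients $\alpha_j=\alpha_j^{(n)}$ and $\beta_j=\beta_j^{(n)}$ stay bounded as $n\to\infty$; granting that, $|a_n-i|\le\bigl(\max_{1\le j\le k}|\alpha_j^{(n)}|\bigr)\sum_{j=1}^{k}t^j\to 0$, and similarly $b_n\to -i$.

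To obtain the uniform bound on the coefficients, I would use the recursive description of the solution $(\alpha_j,\beta_j)$ recalled just before the statement, together with the elementary estimate $|\widetilde z_j^{(2n)}|=|z_j^{(2n)}|/(2\rho_n)\le\tfrac12$. Substituting (\ref{expansion a b}) and $\widehat z_j^{(2n)}=t\,\widetilde z_j^{(2n)}$ into (\ref{system1}) and expanding $h_n(a_n)$ in powers of $t$ — the branches of $\sqrt{\widehat z_j^{(2n)}-a_n}$ and $\sqrt{\widehat z_j^{(2n)}-b_n}$ being unambiguous for $n$ large, their arguments lying near $-i$ and $i$ — one finds that the coefficient of $t^m$ is of the form
\[
c\,\alpha_m+F_m\bigl(\alpha_1,\ldots,\alpha_{m-1},\beta_1,\ldots,\beta_{m-1}\bigr)+\frac1n\sum_{j=1}^{2n}G_m\bigl(\widetilde z_j^{(2n)};\alpha_1,\ldots,\alpha_{m-1},\beta_1,\ldots,\beta_{m-1}\bigr),
\]
where $c$ is a nonzero \emph{absolute} constant (one computes $c=-2i$), $\beta_m$ drops out of this coefficient, and $F_m,G_m$ are polynomials not depending on $n$. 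Requiring this coefficient to vanish — which is precisely condition (\ref{condition psi}) at order $t^m$ — determines $\alpha_m$, and the symmetric computation in (\ref{system2}) determines $\beta_m$.

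The uniform bound then follows by induction on $m$. The base case is (\ref{coeff-1}), which gives $|\alpha_1^{(n)}|=|\beta_1^{(n)}|=\bigl|\tfrac1{2n}\sum_{j=1}^{2n}\widetilde z_j^{(2n)}\bigr|\le\tfrac12$. For the inductive step, suppose $|\alpha_l^{(n)}|,|\beta_l^{(n)}|\le M_{m-1}$ for all $l<m$ and all $n$. Since $F_m$ and $G_m$ are fixed polynomials, they are bounded on the compact set $\{|w|\le\tfrac12\}\times\{|\alpha_l|,|\beta_l|\le M_{m-1}\}$; in particular $\bigl|\tfrac1n\sum_{j=1}^{2n}G_m(\widetilde z_j^{(2n)};\ldots)\bigr|\le 2\sup|G_m|$, and likewise $|F_m|$ is bounded. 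As $c\ne 0$ does not depend on $n$, solving the order-$t^m$ relation for $\alpha_m^{(n)}$ gives a bound $M_m$ independent of $n$, and the same for $\beta_m^{(n)}$. This closes the induction, and combined with the first paragraph it yields $a_n\to i$ and $b_n\to -i$.

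The one genuinely delicate point is the bookkeeping in the second paragraph: one must verify that, once the square-root branches are fixed consistently (so that $R_n(\widehat z_j^{(2n)})\to 1$), the $t^m$-coefficient really is affine in $\alpha_m$ with a fixed nonzero linear part and with a remainder that is a polynomial in the already-controlled quantities and in averages of such polynomials over the bounded points $\widetilde z_j^{(2n)}$, so that the recursion is well posed and propagates uniform-in-$n$ bounds; everything else is routine. (Alternatively one could sidestep the expansion entirely: for $t$ small $h_n$ is a bounded analytic function in a neighbourhood of $\pm i$ converging to the Pad\'e function $h$, which satisfies $h(\pm i)=0$ with non-degenerate Jacobian, so the implicit function theorem produces $a_n\to i$, $b_n\to -i$ directly; but the explicit expansion (\ref{expansion a b}) is itself what is needed later in the paper.)
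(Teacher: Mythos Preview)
Your proposal is correct and follows essentially the same approach as the paper: reduce to uniform-in-$n$ boundedness of the finitely many coefficients $\alpha_j,\beta_j$, then induct on the order using the affine structure of the $t^m$-coefficient in $h_n(a_n)$ (respectively $h_n(b_n)$) with fixed nonzero linear part in $\alpha_m$ (respectively $\beta_m$) and polynomial remainder in the already-controlled data. The paper carries out the same argument after the cosmetic change of variables $\widetilde a_n=-ia_n$, $\widetilde b_n=ib_n$, which makes the leading coefficient equal to $2$ rather than your $-2i$ and exhibits the remainder explicitly as a combination of power sums $\tfrac{1}{n}\sum_j\widetilde z_j^{\,l}$.
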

\begin{proof}
In view of (\ref{expansion a b}), it is sufficient to prove that the
coefficients $\alpha_{l}$ and $\beta_{l}$, $l=1,\ldots,k$, remain
bounded as $n$ tends to infinity. From (\ref {coeff-1}), this is
true when $l=1$.

For indices $l>2$, we prove the assertion by induction. Let
$\widetilde a_{n}=-ia_{n}$ and $\widetilde b_{n}=ib_{n}$, then
\begin{equation*}h_{n}(a_{n})=-iF(\widetilde a_{n},\widetilde b_{n}),\qquad
h_{n}(b_{n})=-iF(-\widetilde b_{n},-\widetilde
a_{n}),\end{equation*} with
\begin{equation*}
F(x,y)=x+y-\frac1n\sum_{j=1}^{2n}\frac{\sqrt{(x+it\widetilde z_j)
(y-it\widetilde z_j)}}{x+it\widetilde z_j},
\end{equation*}
where we have dropped the superscript $(2n)$ in the $\widetilde
z_{j}$'s for simplicity. For $l=2,\ldots,k$, it is easily checked
that the coefficients of $t^{l}$ in the expansions of $F(\widetilde
a_{n},\widetilde b_{n})$  and $F(-\widetilde b_{n},-\widetilde
a_{n})$ can be written as
\begin{equation}\label{coeff-tl-1}
2\alpha_{l}-2c_{0}-\frac{c_{1}}{n}\sum_{j=1}^{2n}\widetilde
z_{j}-\cdots - \frac{c_{l}}{n}\sum_{j=1}^{2n}\widetilde z_{j}^{l},
\end{equation}
\begin{equation}\label{coeff-tl-2}
-2\beta_{l}-2d_{0}-\frac{d_{1}}{n}\sum_{j=1}^{2n}\widetilde
z_{j}-\cdots - \frac{d_{l}}{n}\sum_{j=1}^{2n}\widetilde z_{j}^{l},
\end{equation}
where the $c_{1}$,\ldots,$c_{l}$ and the $d_{1}$,\ldots,$d_{l}$ are
polynomial expressions in the $\alpha_{1}$,\ldots, $\alpha_{l-1}$,
$\beta_{1}$,\ldots,$\beta_{l-1}$. Hence, the vanishing of
(\ref{coeff-tl-1})-(\ref{coeff-tl-2}) and the boundedness of the
$\widetilde z_{j}$, $j=1,\ldots,2n$, show inductively that the
coefficients $\alpha_{l}$ and $\beta_{l}$, $l=1,\ldots,k$, of
$a_{n}$ and $b_{n}$ remain bounded as $n$ tends to infinity.
\end{proof}
\begin{remark}
The question arises whether the expansions (\ref{expansion a b})
are convergent as $k\to\infty$ for small $t$. If this is true, we
have
\begin{equation}h_{n}(\lim_{k\to\infty}a_{n})=h_{n}(\lim_{k\to\infty}b_{n})=0.\end{equation}
This would enable us to relax the condition (\ref{nbound}) to
\begin{equation}
\rho_n\leq c_1n,
\end{equation}
for a sufficiently small constant $c_1>0$.
\end{remark}
We still have to choose two curves $\gamma_{1,n}$ and
$\widetilde\gamma_{2,n}$, each connecting $a_{n}$ with $b_{n}$.
These two curves will build up the closed contour
$\Gamma_{n}=\gamma_ {1,n}\cup\widetilde\gamma_{2,n}$ which appears
in the Riemann-Hilbert problem. We take the contour $\Gamma_{n}$ as
a local deformation around the points $i$ and $-i$ of the contour
$\Gamma$ defined in (\ref{Gamma}). For that, we fix two sufficiently
small disks $U^{(\pm)}$ surrounding $\pm i$. From Proposition
\ref{conv-an-bn}, for $n$ sufficiently large, $a_{n}$ and $b_{n}$
will lie in those disks. We let $\gamma_{1,n}$ and
$\widetilde\gamma_{2,n}$ respectively coincide with $\gamma_{1}$ and
$\widetilde\gamma_{2}$ for $z$ outside $U^{(\pm)}$, and
near $\pm i$ we can extend $\gamma_{1,n}$ and
$\widetilde\gamma_{2,n}$ arbitrarily, as long as they do not
intersect and have no self-intersections.

Finally, define
\begin{equation}\label{phi_n}
\phi_{n}(z)=-2g_{n}(z)+2z+\frac{1}{n}\sum_{j=1}^{2n}\log(z-\widehat
z_j^{(2n)})-2\ell_{n},
\end{equation}
for $z$ outside of $\gamma_{1,n}$.
For $z\in\gamma_{1,n}$, it follows from (\ref{var eq 1b}) that
$\phi_{n,+}(z)=-g_{n,+}(z)+g_{n,-}(z)$, and, together with
(\ref{density}), this implies that
\begin{equation}\label{def phi}
\phi_{n}(z)=-\int_{a_{n}}^z\frac{h_{n}(s)}{R_{n}(s)}ds,
\end{equation}
with $R_{n}$ and $h_{n}$ the functions respectively defined by
(\ref{R}) and (\ref{h}). The path of integration in (\ref{def phi})
is in $\C\setminus(\gamma_{1,n}\cup(\cup\{\widehat z_j^{(2n)}\}))$
and does not wind around any of the points $\widehat z_j^{(2n)}$. The
function $\phi_{n}(z)$ has logarithmic singularities at $ \widehat
z_j^{(2n)}$, $j=1,\ldots,n$, and a branch cut starting at $a_{n}$
which goes along $\gamma_{1,n}$ and then further to infinity. In the
case of Pad\'e interpolants, the function $\phi_{n}$ simplifies to
\begin{equation*}\phi(z)=2\int_{i}^{z}\frac{\sqrt{s^{2}+1}}{s}ds.\end{equation*}
\begin{proposition}\label{prop-phin}
Let $U$ be a given neighborhood of $0$. Then,
we have
\begin{equation*}\phi_{n}\to\phi,\qquad\text{as}\quad n\to\infty,\end{equation*}
locally uniformly in $\C\setminus(\gamma_{1}\cup U)$. Moreover, for
any constant $C<0$, there exists a neighbourhood $U$ of 0 such that
$\Re(\phi_{n})<C$ in $U$ for $n$ large enough.
\end{proposition}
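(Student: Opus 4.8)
The plan is to establish the two assertions of Proposition~\ref{prop-phin} separately, both relying on the integral representation (\ref{def phi}) together with the control of $a_n,b_n$ from Proposition~\ref{conv-an-bn} and the smallness of the rescaling parameter $t=\rho_n/n\le c n^{-\alpha}$. First I would analyze $h_n(z)$ as defined in (\ref{h}). Writing $\widehat z_j^{(2n)}=t\widetilde z_j^{(2n)}$ with the $\widetilde z_j^{(2n)}$ bounded (by $1/2$), and using $a_n\to i$, $b_n\to -i$, one sees that $R_n(\widehat z_j^{(2n)})=((\widehat z_j^{(2n)}-a_n)(\widehat z_j^{(2n)}-b_n))^{1/2}\to\sqrt{(-i)(i)}=1$ uniformly in $j$, so the sum $\tfrac1n\sum_{j=1}^{2n} R_n(\widehat z_j^{(2n)})/(\widehat z_j^{(2n)}-z)$ converges to $2/( -z)\cdot 0$ — more precisely, since $\widehat z_j^{(2n)}\to 0$, each term tends to $1/(0-z)=-1/z$ so the average tends to $-2/z$, uniformly for $z$ bounded away from $0$. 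Hence $h_n(z)\to a_\infty+b_\infty-2z-2/z = -2z-2/z$ where $a_\infty+b_\infty=i+(-i)=0$, locally uniformly in $\C\setminus\{0\}$. Likewise $R_n(z)\to R(z)=(z^2+1)^{1/2}$ locally uniformly off $\gamma_1$, so $h_n(z)/R_n(z)\to (-2z-2/z)/\sqrt{z^2+1} = -2\sqrt{z^2+1}/z$, matching the Pad\'e integrand. Integrating from $a_n$ (which tends to $i$) and using that the path of integration can be chosen in the common region $\C\setminus(\gamma_1\cup U)$ for $n$ large, dominated convergence then gives $\phi_n(z)\to \phi(z)=2\int_i^z \sqrt{s^2+1}/s\,ds$ locally uniformly in $\C\setminus(\gamma_1\cup U)$, which is the first claim.

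For the second claim, near $z=0$ the logarithmic singularities of $\phi_n$ dominate. From (\ref{def phi}) and (\ref{h}), the term $\tfrac1n\sum_{j=1}^{2n} R_n(\widehat z_j^{(2n)})/(\widehat z_j^{(2n)}-s)$ integrated against $1/R_n(s)$ produces, after the integral calculation already used to derive (\ref{phi_n}), the logarithmic contributions $-\tfrac1n\sum_{j=1}^{2n}\log(z-\widehat z_j^{(2n)})+(\text{bounded})$; equivalently, comparing (\ref{phi_n}) with (\ref{def phi}) directly shows $\phi_n(z)=-2g_n(z)+2z+\tfrac1n\sum_{j=1}^{2n}\log(z-\widehat z_j^{(2n)})-2\ell_n$ with $g_n$ analytic at $0$ and bounded there (being a logarithmic potential of a fixed-mass measure supported on $\gamma_{1,n}$, uniformly bounded on compacts away from $\gamma_{1,n}$, and $0\notin\gamma_{1,n}$ for $n$ large since the endpoints are near $\pm i$). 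Now for $z$ in a small disk $U=\{|z|<\delta\}$ and $|\widehat z_j^{(2n)}|\le \tfrac c2 n^{-\alpha}$, we have $|z-\widehat z_j^{(2n)}|\le \delta + \tfrac c2 n^{-\alpha}$, so $\Re\big(\tfrac1n\sum_j\log(z-\widehat z_j^{(2n)})\big)=\tfrac1n\sum_j\log|z-\widehat z_j^{(2n)}|\le 2\log(\delta+\tfrac c2 n^{-\alpha})$. Since $g_n$, $\ell_n$ and the term $2z$ are bounded uniformly on $U$ for $n$ large (boundedness of $\ell_n$ follows from the convergence $g_n\to g$ and condition (c), or can be quoted from the construction), we obtain $\Re(\phi_n(z))\le 2\log(\delta+\tfrac c2 n^{-\alpha})+M$ on $U$ for some constant $M$ independent of $n$ and $\delta$. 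Given $C<0$, choose $\delta$ small enough and then $n$ large enough that $2\log(\delta+\tfrac c2 n^{-\alpha})<C-M$; this yields $\Re(\phi_n)<C$ in $U$ for $n$ large, as required.

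The main obstacle I anticipate is the uniform control of the $n$-dependent pieces near the singularity: one must verify that $g_n$ (hence $\ell_n$, and the non-logarithmic part of $\phi_n$) is bounded on a fixed neighborhood of $0$ uniformly in $n$, which requires knowing the density $\psi_n$ on $\gamma_{1,n}$ has uniformly bounded total variation and that $\gamma_{1,n}$ stays uniformly away from $0$. Both follow from Proposition~\ref{conv-an-bn} (endpoints near $\pm i$) and the explicit formula (\ref{density})--(\ref{h}) for $\psi_n$ — indeed $h_n$ is uniformly bounded on $\gamma_{1,n}$ and $R_{n,+}$ is uniformly bounded away from $0$ except near the endpoints, where $\psi_n$ has the usual integrable square-root behaviour with uniformly bounded coefficients — but making this rigorous is the one place requiring genuine (if routine) estimates rather than a direct appeal to the Pad\'e-case formulas. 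Everything else is a dominated-convergence argument plus the elementary logarithmic estimate above.
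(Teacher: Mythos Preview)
Your proposal is correct and follows essentially the same approach as the paper. The paper's proof is extremely terse (five lines invoking dominated convergence for the first claim and the logarithmic singularities for the second), and you have simply filled in the details the paper omits—in particular the explicit identification of the limit of $h_n/R_n$, and the uniform boundedness of $g_n$ and $\ell_n$ near $0$ needed to isolate the logarithmic term in (\ref{phi_n}); the paper does not even mention this last point, though it is indeed the only place requiring a genuine (routine) estimate.
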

\begin{proof}
From the dominated convergence theorem, the facts that $a_{n}$ tends
to $i$ and the points $\widehat z_j^{(2n)}$ tend to $0$ as
$n\to\infty$ follows that $\phi_{n}$ tends to $ \phi$ point-wise in
$\C\setminus(\gamma_{1}\cup U)$.
By boundedness of the $\phi_{n}$ outside $U$, we derive that the
convergence is locally uniform. The fact that $\Re(\phi_{n})$ has
logarithmic singularities at the $\widehat z_j^ {(2n)}$ which tend
to 0 implies the second assertion.
\end{proof}
Concerning the function $\phi$, the following lemma about the sign
of its real part will be useful in the sequel.
\begin{lemma}\label{lem-sgn-phi}
\emph{(\cite[Lemma 2.9]{W})} Let $D_{0}$, $D_{\infty,1}$ and
$D_{\infty,2}$ be the open domains delimited by the four critical
trajectories of the Pad\'e case, as depicted in Figure
\ref{traject}. Then, the real part of $\phi$ is negative in
$D_{1,\infty}\cup D_{0}$, and it is positive in $D_{2,\infty}$.
\end{lemma}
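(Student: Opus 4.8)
The plan is to exploit the closed form of $\phi$. The starting point is that $\phi(z)=2\eta(z)$, where $\eta(z)=\sqrt{z^{2}+1}+\log\frac{z}{1+\sqrt{z^{2}+1}}$ is the function in the right-hand side of~(\ref{crit-eq}), with the branch of $\sqrt{z^{2}+1}$ chosen analytic off $\gamma_{1}$ and $\sim z$ at infinity; this is the ``explicit integration'' mentioned after~(\ref{crit-eq}) and is checked in one line by differentiating, $\eta'(z)=\sqrt{z^{2}+1}/z$. The function $\phi$ is multi-valued near $0$, but its period around $0$ equals $\pm4\pi i$ (the residue of $\sqrt{z^{2}+1}/z$ at $0$ being $\pm1$), which is purely imaginary, so $\Re\phi$ is a single-valued continuous function on $\C\setminus\{0\}$, harmonic in each of $D_{0}$, $D_{1,\infty}$, $D_{2,\infty}$. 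By the defining relation~(\ref{crit}), the level set $\{\Re\phi=0\}$ is exactly the union of the four critical trajectories $\gamma_{1}$, $\gamma_{2}$ and $(\pm i,\pm i\infty)$, which by Figure~\ref{traject} is precisely the common boundary of the three domains.

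The key point is then topological: $\Re\phi$ is continuous and nowhere zero on each of the connected open sets $D_{0}$, $D_{1,\infty}$, $D_{2,\infty}$, hence has a constant sign on each of them, and it remains to identify those three signs by evaluating $\Re\phi$ at one convenient point of each domain and using the asymptotics of $\eta$. As $z\to0$ in $D_{0}$ one has $\eta(z)=1+\log(z/2)+\bigO(z^{2})$, so $\Re\phi(z)\to-\infty$ and hence $\Re\phi<0$ on $D_{0}$ (in agreement with Proposition~\ref{prop-phin}). On the negative real axis, which lies in $D_{1,\infty}$, the chosen branch gives $\eta(-R)=-\sqrt{R^{2}+1}+\log\frac{R}{\sqrt{R^{2}+1}-1}=-R+o(1)$ as $R\to+\infty$, so $\Re\phi\to-\infty$ and hence $\Re\phi<0$ on $D_{1,\infty}$. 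On the positive real axis, which lies in $D_{2,\infty}$, $\eta(R)=\sqrt{R^{2}+1}+\log\frac{R}{1+\sqrt{R^{2}+1}}=R+o(1)$, so $\Re\phi\to+\infty$ and $\Re\phi>0$ on $D_{2,\infty}$. An equivalent route is the maximum principle: $\Re\phi$ is harmonic, vanishes on the boundary of each domain, satisfies $\Re\phi\to-\infty$ at $0$ and $\Re\phi(z)=2\Re z+o(1)$ as $z\to\infty$, and $D_{1,\infty}\subset\{\Re z<0\}$, $D_{2,\infty}\subset\{\Re z>0\}$ by the trajectory picture.

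The only genuinely non-routine input is the global picture of the critical trajectories of $(z^{2}+1)z^{-2}dz^{2}$ — namely that $\{\Re\phi=0\}$ consists of exactly these four arcs and that no arc enters the interior of $D_{0}$, $D_{1,\infty}$ or $D_{2,\infty}$ — which is recalled from \cite{W} and displayed in Figure~\ref{traject}, and may therefore be taken as given. Granting that, the argument is elementary; the only place where a little care is needed is in tracking the branch of $\sqrt{z^{2}+1}$ (its sign at $0$ and along the negative real axis) when reading off the limits above.
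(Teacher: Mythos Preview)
The paper does not prove this lemma at all: it is quoted verbatim from \cite[Lemma~2.9]{W}, and no argument is supplied here. Your write-up therefore goes beyond what the paper does, and the approach you give is correct. The identification $\Re\phi=2\Re\eta$ (up to the purely imaginary constant $2\eta(i)$), the observation that the $\pm4\pi i$ monodromy around $0$ makes $\Re\phi$ single-valued on $D_{0}\setminus\{0\}$, and the connectedness argument reducing the question to a single evaluation in each domain are exactly the right ingredients. Your asymptotic evaluations at $0$, along $(-\infty,0)$ and along $(0,\infty)$ are consistent with the branch $\sqrt{z^{2}+1}\sim z$ cut along~$\gamma_{1}$.

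Two small remarks. First, of your two routes the ``constant sign on a connected set'' argument is the robust one; the alternative via the maximum principle on the unbounded domains $D_{1,\infty}$, $D_{2,\infty}$ would, strictly speaking, require a Phragm\'en--Lindel\"of step, since $\Re\phi(z)=2\Re z+o(1)$ does not by itself bound $\Re\phi$ near the imaginary half-lines where $\Re z\to 0$. Second, you are right to flag that the global statement ``$\{\Re\phi=0\}$ equals exactly the four arcs of Figure~\ref{traject}'' is the substantive input; this is precisely what is established in \cite{W} and is used here as a black box, just as in the paper.
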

Note that, by (\ref{crit}), the real part of $\phi$ vanishes on
$\gamma_{1}$, $\gamma_ {2}$ and the vertical half-lines $(\pm i,\pm
i\infty)$.
\section{Steepest descent analysis of the RH problem}\label{descent}
As usual, the steepest descent analysis consists of a number of
transformations.
\subsection{First transformation $Y\mapsto T$}
Define
\begin{equation}\label{def T}
T(z)=e^{n\ell_{n}\sigma_3}Y(z)e^{-ng_{n}(z)\sigma_3}e^{-n\ell_{n}\sigma_3},
\end{equation}
Note that $\Gamma_{n}$, which is fixed outside $U^{(\pm)}$, will
surround the scaled interpolation points $\widehat z_j^{(2n)}$ for
$n$ large by (\ref{nbound}).
\subsubsection*{RH problem for $T$}
\begin{itemize}
\item[(a)] $T:\mathbb C\setminus\Gamma_{n}\to \mathbb C^{2\times 2}$ is
analytic, with $\Gamma_{n}=\gamma_{1,n}\cup\widetilde\gamma_{2,n}$,
\item[(b)] $T$ has continuous boundary values when
approaching $\Gamma_{n}$, and they are related by
\begin{equation*}
T_+(z)=T_-(z)J_T(z),
\end{equation*}
with \begin{equation*}
J_T(z)=\begin{pmatrix}e^{-n(g_{n,+}(z)-g_{n,-}(z))}&
e^{n(g_{n,+}(z)+g_{n,-}(z)-V_n(z)+2\ell_{n})}
\\0&e^{n(g_{n,+}(z)-g_{n,-}(z))}\end{pmatrix}.
\end{equation*}
\item[(c)] $T(z)=I+\bigO(z^{-1})$ as
$z\to\infty$.
\end{itemize}
Making use of the function $\phi_{n}$ defined in (\ref{phi_n}), we
obtain
\begin{equation*}
J_T(z)=\begin{cases}
\begin{pmatrix}e^{n\phi_{n,+}(z)}&e^{W_n(z)}
\\0&e^{n\phi_{n,-}(z)}\end{pmatrix},&\mbox{ for
$z\in\gamma_{1,n}$},\\
\begin{pmatrix}1&e^{W_n(z)}e^{-n\phi_{n}(z)}
\\0&1\end{pmatrix},&\mbox{ for
$z\in\widetilde\gamma_{2,n}$},
\end{cases}
\end{equation*}
with
\begin{equation}\label{Wn}
W_n(z)=
-\log(z-\widehat z_0^{(2n)}).
\end{equation}

\subsection{Opening of the lens $T\mapsto S$}\label{opening}
On $\gamma_{1,n}$, the jump matrix for $T$ can be factorized:
\begin{multline}
\begin{pmatrix}e^{n\phi_{n,+}(z)}&e^{W_n(z)}
\\0&e^{n\phi_{n,-}(z)}\end{pmatrix}=J_1(z)J_2(z)J_3(z)\\=\begin{pmatrix}1&0
\\e^{n\phi_{n,-}(z)}e^{-W_n(z)}&1\end{pmatrix}\begin{pmatrix}0&e^{W_n(z)}
\\-e^{-W_n(z)}&0\end{pmatrix}\begin{pmatrix}1&0
\\e^{n\phi_{n,+}(z)}e^{-W_n(z)}&1\end{pmatrix}.
\end{multline}
The jump matrix $J_1$ can be continued analytically to a region to
the left of $\gamma_{1,n}$, and $J_3$ to a region to the right of
$\gamma_{1,n}$ (simply by replacing $\phi_{n,+}$ and $ \phi_{n,-}$
by $\phi_{n}$). This enables us to split the jump contour
$\gamma_{1,n}$ into three distinct curves $\gamma_{1,n}',
\gamma_{1,n}, \gamma_{1,n}''$ (from left to right) connecting $+i$
with $-i$: we call this the opening of the lens. Define
\begin{equation}\label{def S}
S(z)=\begin{cases} T(z),&\mbox{ for $z$ outside the lens-shaped
region},\\
T(z)J_1(z),&\mbox{ for $z$ in the left part of the lens},\\
T(z)J_3(z)^{-1},&\mbox{ for $z$ in the right part of the lens.}\\
\end{cases}
\end{equation}

\subsubsection*{RH problem for $S$}
\begin{itemize}
\item[(a)] $S:\mathbb C\setminus\Gamma_{n}\to \mathbb C^{2\times 2}$ is
analytic, with
$\Gamma_{n}=\gamma_{1,n}\cup\widetilde\gamma_{2,n}\cup\gamma_{1,n}'\cup\gamma_
{1,n}''$,
\item[(b)] $S$ has continuous boundary values when
approaching $\Gamma_{n}$, and they are related by
\begin{align*}
&S_+(z)=S_-(z)J_T(z),&\mbox{ for $z\in\widetilde\gamma_{2,n}$},\\
&S_+(z)=S_-(z)J_1(z),&\mbox{ for $z\in\gamma_{1,n}'$},\\
&S_+(z)=S_-(z)J_2(z),&\mbox{ for $z\in\gamma_{1,n}$},\\
&S_+(z)=S_-(z)J_3(z),&\mbox{ for $z\in\gamma_{1,n}''$},
\end{align*}
\item[(c)] $S(z)=I+\bigO(z^{-1})$ as
$z\to\infty$.
\end{itemize}
We choose $\gamma_{1,n}', \gamma_{1,n}''$ to lie sufficiently close
to $\gamma_{1,n}$, and in any case in the region where $\phi_{n}$ is
analytic, away from the scaled interpolation points $\widehat
z_j^{(2n)}$. The jump matrices $J_1$, $J_3$, and $J_T$ decay on the
contours $\gamma_{1,n}'$, $\gamma_{1,n}''$, and
$\widetilde\gamma_{2,n}$ respectively as $n\to\infty$, except in two
small but fixed neighborhoods $U^{(\pm)}$ of $\pm i$. This follows
from Proposition \ref{prop-phin} and Lemma \ref{lem-sgn-phi},
together with the fact that $W_n(z)$ is uniformly bounded on the
jump contour. The only jumps that survive the large $n$ limit, are
the jump on $\gamma_{1,n}$ and the jumps in the vicinity of $\pm i$.
In this perspective one is tempted to believe that the leading order
asymptotic behavior of $S$, away from $\pm i$, will be determined by
the solution to a RH problem with jump $J_2$ on the curve
$\gamma_{1,n}$. Nevertheless, some substantial analysis remains to
be done to turn this into a rigorous argument.

\subsection{Outside parametrix}

Consider the following RH problem, on the contour $\gamma_{1,n}$
connecting $a_{n}$ with $b_{n}$.
\subsubsection*{RH problem for $P^{(\infty)}$:}
\begin{itemize}
   \item[(a)] $P^{(\infty)}:\mathbb{C}\setminus \gamma_{1,n}\to\mathbb{C}^{2\times 2}$ is
       analytic,
   \item[(b)] $P^{(\infty)}_+(z)=P^{(\infty)}_-(z)
       \begin{pmatrix}
           0 & e^{W_n(z)} \\
           -e^{-W_n(z)} & 0
       \end{pmatrix}$,\qquad for $z\in\gamma_{1,n}$,
   \item[(c)] $P^{(\infty)}(z)=I+\bigO(z^{-1})$,\qquad as $z\to\infty$.
\end{itemize}
A solution to this problem is given by
\begin{equation}\label{Pinfty}
   P^{(\infty)}(z)=N^{-1}
   \left(\frac{z-b_{n}}{z-a_{n}}\right)^{-\sigma_3 /4}
   ND_{n}(z)^{-\sigma_{3}},
\end{equation}
for $z\in\mathbb C\setminus\gamma_{1,n}$, with
\begin{equation}\label{defN}
           N=\frac{1}{\sqrt 2}
               \begin{pmatrix}
                   1 & 1 \\
                   -1 & 1
               \end{pmatrix} e^{-\frac{1}{4}\pi i\sigma_3},
       \end{equation}
and $D_{n}(z)$ is the Szeg\H{o} function which is analytic and
non-zero in $\C\setminus \gamma_{1,n}$, and satisfies
\begin{equation*}D_{n,+}(x)D_{n,-}(x)=e^{W_{n}(x)},\quad x\in\gamma_{1,n}.\end{equation*}
An explicit expression for $D_{n}(z)$ is given by
\begin{equation}\label{Dn}
D_{n}(z)=\exp\left(\frac{R_{n}(z)}
   {2\pi i}\int_{\gamma_{1,n}}\frac{W_n(s)}{R_{n,+}(s)(s-z)}ds\right),
\end{equation}
with $R_{n}$ defined by (\ref{R}). This outside parametrix
$P^{(\infty)}=P^{(\infty)}(z;n)$ will determine the leading order
asymptotics of $S(z)$ for $z$ in $\mathbb C\setminus U^{(\pm)}$, as
$n\to\infty$. In $U^{(\pm)}$, we need to construct local
parametrices $P^{(\pm)}$ that determine the leading order
asymptotics of $S$ in those disks. This is the goal of the next
section.

\subsection{Local  Airy parametrices}

We will construct local parametrices in the regions $U^{(\pm)}$
surrounding $\pm i$. These parametrices $P=P^{(\pm)}$ should be
analytic in $\overline U^{(\pm)}\setminus
(\gamma_{1,n}'\cup\gamma_{1,n}\cup\gamma_{1,n}''\cup\widetilde\gamma_{2,n})$,
see Figure \ref{contours}, and they should have exactly the same
jumps as $S$ has on $U^{(\pm)} \cap
(\gamma_{1,n}'\cup\gamma_{1,n}\cup\gamma_{1,n}''\cup\widetilde\gamma_{2,n})$.
\begin{figure}
\center
\def\svgwidth{8cm}
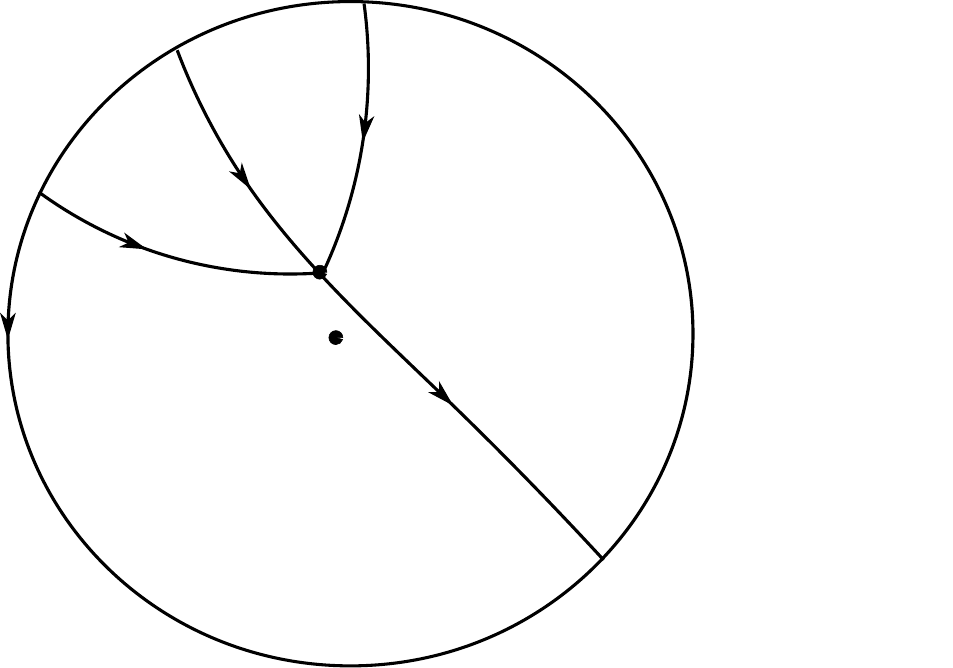
\caption{Contours in the neighborhood $U^{(-)}$ around the point
$-i$.} \label{contours}
\end{figure}
In addition, we aim to construct the parametrices in such a way that
$P^{(\pm)}(z)P^{(\infty)}(z)^{-1}$ is as close as possible to the
identity matrix on $\partial U^{(\pm)}$. As is common for the
construction of local parametrices near the points where the lens
closes, we will build $P$ using the Airy function. If our function
$\phi_{n}(z)$ would behave like $c(z\pm a_{n})^{3/2}$ as $z\to
a_{n}$ and as $c(z\pm b_{n})^{3/2}$ as $z\to b_{n}$, this would be a
standard construction as in \cite{Deift, DKMVZ2, DKMVZ1}.
Unfortunately, this is only the case if
$h_{n}(a_{n})=h_{n}(b_{n})=0$. Therefore we will need some technical
modifications to have suitable parametrices. The reason why we are
still able to construct Airy parametrices, is that
$h_{n}(a_{n})=h_{n}(b_{n})= \bigO(t^{k+1})$ as $n\to \infty$.

\subsubsection{Airy model RH problem}

Define
\[
   y_{j}=y_j(\zeta)=\omega^j\Ai(\omega^j\zeta),\qquad j=0,1,2,
\]
with $\omega=e^{\frac{2\pi i}{3}}$ and $\Ai$ the Airy function. Let
\allowdisplaybreaks{
\begin{align*}
   &A_1(\zeta)=\sqrt{2\pi}e^{-\frac{\pi i}{4}}
       \begin{pmatrix}
           y_0 & -y_2\\
           y_0' & -y_2'
       \end{pmatrix},
       \qquad
   A_2(\zeta)=\sqrt{2\pi}e^{-\frac{\pi i}{4}}\begin{pmatrix}
           -y_1 & -y_2\\
           -y_1' & -y_2'
       \end{pmatrix},
       \\[3ex]
    &A_3(\zeta)=\sqrt{2\pi}e^{-\frac{\pi i}{4}}   \begin{pmatrix}
           -y_2 & y_1\\
           -y_2' & y_1'
       \end{pmatrix},
       \qquad
     A_4(\zeta)= \sqrt{2\pi}e^{-\frac{\pi i}{4}} \begin{pmatrix}
           y_0 & y_1\\
           y_0' & y_1'
       \end{pmatrix},
\end{align*}
} where $y_j'$ denotes the derivative of $y_j$ with respect to
$\zeta$. Since the Airy function is entire, each $A_j$ is an entire
matrix function. Furthermore, using the identity $y_0+y_1+y_2=0$, it
follows that \allowdisplaybreaks{
       \begin{align}
           \label{jumps A: eq1}
           & A_1(\zeta)=A_4(\zeta)
               \begin{pmatrix}
                   1 & 1 \\
                   0 & 1
               \end{pmatrix},
           \\[1ex]
           \label{jumps A: eq2}
           & A_1(\zeta)=A_2(\zeta)
               \begin{pmatrix}
                   1 & 0 \\
                   1 & 1
               \end{pmatrix},
           \\[1ex]
           \label{jumps A: eq3}
           & A_2(\zeta)=A_3(\zeta)
               \begin{pmatrix}
                   0 & 1 \\
                   -1 & 0
               \end{pmatrix},\\
           & A_3(\zeta)=A_4(\zeta)\label{jumps A: eq4}
               \begin{pmatrix}
                   1 & 0 \\
                   1 & 1
               \end{pmatrix}.
       \end{align}
       }
       \begin{figure}
       \center
       \def\svgwidth{8cm}
       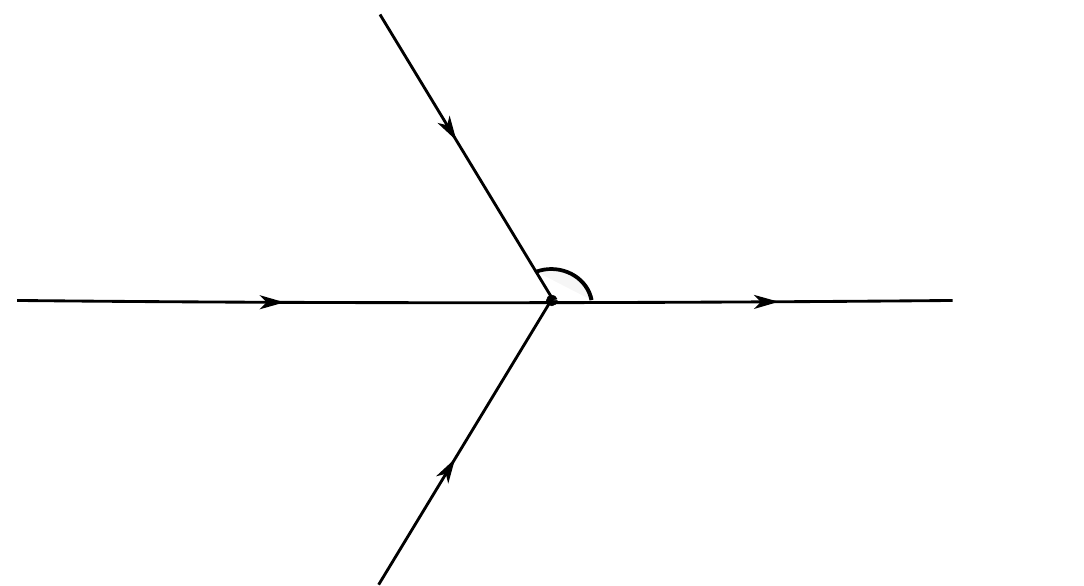
       \caption{The four sectors $(\Gamma_{j})_{j=1,\ldots,4}$, and the contours and
jumps for the matrix $A(\zeta)$.}
       \label{jumps}
       \end{figure}
       From the asymptotics of the Airy function and its derivative,
       \begin{equation}
\label{Airy1} \Ai(\zeta) = \frac{1}{2\sqrt{\pi}} \zeta^{-1/4}
e^{-\frac{2}{3}\zeta^{3/2}}
    \left(1 + \bigO\left(\frac{1}{\zeta^{3/2}}\right)\right),
\end{equation}
\begin{equation}
\label{Airy2} \Ai'(\zeta) = \frac{-1}{2\sqrt{\pi}} \zeta^{1/4}
e^{-\frac{2}{3}\zeta^{3/2}}
    \left(1 + \bigO\left(\frac{1}{\zeta^{3/2}}\right)\right),
\end{equation}
as $\zeta \to \infty$ with $| \arg \zeta | < \pi$, follows that
       \begin{align}\label{RHP:A-c}
           A_j(\zeta) &= \zeta^{-\frac{\sigma_3}{4}}N
               \left[I+\bigO\left(\zeta^{-3/2}\right)\right]
               e^{-\frac{2}{3}\zeta^{3/2}\sigma_3},\quad j=1,\ldots,4,
       \end{align}
as $\zeta\to\infty$ in the sector $\widetilde \Gamma_j$ defined by
       \begin{align}
       &\widetilde \Gamma_1=\{\zeta\in\mathbb C: -\frac{\pi}{3}<\arg\zeta<\pi\},\label{S1}\\
       &\widetilde \Gamma_2=\{\zeta\in\mathbb C: \frac{\pi}{3}<\arg\zeta<\frac{5\pi}{3}\},\\ &\widetilde \Gamma_3=\{\zeta\in\mathbb C: -\frac{5\pi}{3}<\arg\zeta<-\frac{\pi}{3}\},\\
       &\widetilde \Gamma_4=\{\zeta\in\mathbb C:
       -\pi<\arg\zeta<\frac{\pi}{3}\}.\label{S4}
       \end{align}
Let $\Gamma_{j}$, $j=1,\ldots,4$, be the sectors delimited by the
four rays of argument $-\frac{2\pi}{3},0,\frac{2\pi}{3},\pi$, as
shown in Figure \ref{jumps}. Then, it follows from what precedes
that the matrix $A$ such that
\begin{equation*}
A(\zeta)=A_{j}(\zeta),\quad\zeta\in\Gamma_{j},\quad j=1,\ldots,4,
\end{equation*}
admits the jumps shown in Figure \ref{jumps} and has the asymptotic
behavior
\begin{equation}\label{asympt-A}
           A(\zeta) = \zeta^{-\frac{\sigma_3}{4}}N
               \left[I+\bigO\left(\zeta^{-3/2}\right)\right]
               e^{-\frac{2}{3}\zeta^{3/2}\sigma_3},\qquad\text{as }\zeta\to\infty,
\end{equation}
with $N$ the constant matrix defined by (\ref{defN}).
\subsection{Construction of the parametrix in $U^{(-)}$}
\label{Const-param} We search for functions $f_{n}, s_{n}$ in
$U^{(-)}$ such that the function $\phi_{n}$, defined by
(\ref{phi_n}), can be expressed as
\begin{equation}\label{phi f}
\phi_{n}(z)\equiv
\frac{4}{3}f_{n}(z)^{3/2}+2s_{n}(z)f_{n}(z)^{1/2} \mod 2\pi
i,\qquad\mbox{ for $z\in U^{(-)}$}.
\end{equation}
In view of the integral expression (\ref{def phi}) of $\phi_{n}(z)$,
we therefore define $f_ {n}$ by
\begin{equation}
-\frac{4}{3}f_{n}(z)^{3/2}=\int_{b_{n}}^z\frac{h_{n}(s)-h_{n}(b_{n})}{R_{n}(s)}ds,
\end{equation}
and $s_{n}(z)$ by
\begin{equation}
s_{n}(z)=\frac{-1}{2f_{n}(z)^{1/2}}\int_{b_{n}}^z\frac{h_{n}(b_{n})}{R_{n}(s)}ds.
\end{equation}
Then $f_{n}$ and $s_{n}$ are both analytic functions in $U^{(-)}$,
and we have
\begin{align}
&f_{n}(b_{n})=0, &
|f_{n}'(b_{n})|^{3/2}=\frac{|h_{n}'(b_{n})|}{2\sqrt{|b_{n}-a_{n}|}}=\sqrt{2}+\bigO(t)>0,\\
&\label{bounds}s_{n}(z)=\bigO(t^{k+1}),&\mbox{ as $n\to \infty$,
uniformly for $z\in U^{(-)}$}.
\end{align}
For $n$ sufficiently large, $f_{n}+s_{n}$ is a conformal map from
$U^{(-)}$ onto a neighborhood of 0. In the Pad\'e case, we have
\begin{equation*}
\phi(z)=2\int_{-i}^{z}\frac{\sqrt{s^{2}+1}}{s}ds,
\end{equation*}
and it suffices to consider the function $f$ defined by
\begin{equation*}
f(z)=\left[\frac34\phi(z)\right]^{2/3},
\end{equation*}
which is analytic in a neighborhood of $-i$, and where the $2/3$rd
power is taken so that $f(z)$ is real negative for $z\in\gamma_{1}$.
Then, the three contours $\widetilde \gamma_2$, $\gamma_1'$ and
$\gamma_1''$ are chosen so that they are respectively mapped by $f$
on the real positive semi-axis, and on the rays of argument
$-2\pi/3$ and $2\pi/3$. In our situation, and for $n$ large, the map
$f_{n}+s_{n}$ does not send $\gamma_{1,n}$ exactly on the negative
real line, see Figure \ref{maps}. Choosing $\gamma_{1,n}$ as an arc
which tends to $\gamma_{1}$ as $n$ tends to infinity, and since
$f_{n}+s_{n}$ converges uniformly to $f$ in $U^{(-)}$ as $n$ tends
to infinity, we get that $\gamma_{1,n}$ is mapped to an arc
$\lambda_{1,n}$ which tends to the negative real line. Then, the
contours $\gamma_{1,n}'$, $\gamma_{1,n}''$ and
$\widetilde\gamma_{2,n}$ can also be chosen so that they are mapped
by $f_{n}+s_{n}$ to contours $\lambda_{1,n}'$, $ \lambda_{1,n}''$
and $\widetilde\lambda_{2,n}$ tending to the three other rays of the
usual Airy parametrix.
\begin{figure}
\center
\def\svgwidth{18cm}
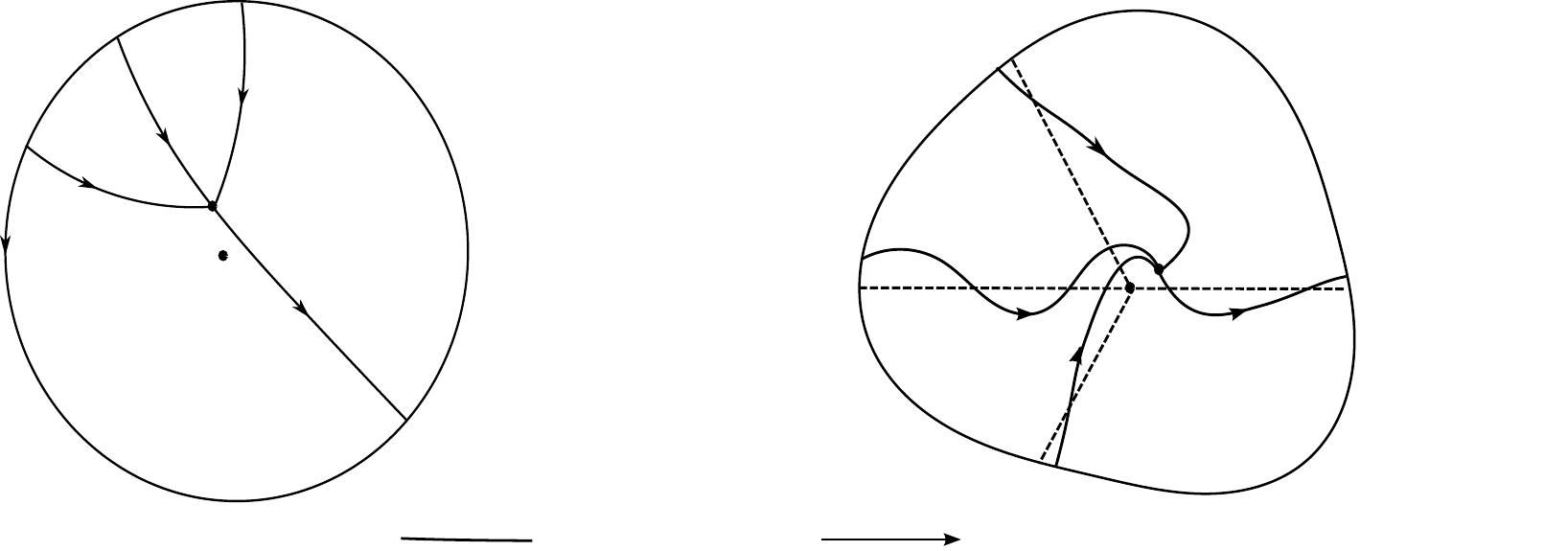
\caption{Images of the domain $U^{(-)}$ and the contours by the
conformal map $f_{n} +s_{n}$. Inside the image domain, contours of
the usual Airy parametrix are drawn in dotted lines.} \label{maps}
\end{figure}

For $E^{(-)}$ some matrix-valued analytic function in $U^{(-)}$ that
we will define in the sequel, let
\begin{equation}\label{def P+}
P^{(-)}(z):=E^{(-)}(z) \widetilde A_{n}(n^{2/3}(f_{n}(z)+s_{n}(z)))
e^{\frac{n}{2}\phi_{n}(z)\sigma_3}e^{-\frac{W_n(z)}{2}\sigma_3},
\end{equation}
with
\begin{equation*}
\widetilde A_{n}(\zeta)=A_{j}(\zeta),\qquad\zeta\in
n^{2/3}S_{j,n},\quad j=1,\ldots,4,
\end{equation*}
and $S_{j,n}$ is the image by the conformal map $f_{n}+s_{n}$ of the
region $R_{j,n}$ in $U^{(-)}$, as indicated in Figure \ref{maps}.
\medskip
As a consequence of (\ref{jumps A: eq1})-(\ref{jumps A: eq4}) and
the right multiplication with the two exponential factors
in (\ref{def P+}), it follows easily that
\begin{align*}
&P_+^{(-)}(z)=P_-^{(-)}(z)J_T(z),&\mbox{ for $z\in\widetilde\gamma_{2,n}$},\\
&P_+^{(-)}(z)=P_-^{(-)}(z)J_1(z),&\mbox{ for $z\in\gamma_{1,n}'$},\\
&P_+^{(-)}(z)=P_-^{(-)}(z)J_2(z),&\mbox{ for $z\in\gamma_{1,n}$},\\
&P_+^{(-)}(z)=P_-^{(-)}(z)J_3(z),&\mbox{ for $z\in\gamma_{1,n}''$}.
\end{align*}

\medskip

Now our main concern is the matching of $P^{(-)}$ with
$P^{(\infty)}$ at $\partial U^{(-)}$. Suppose that $z$ is in region
$R_{j,n}$ and on the boundary of $U^{(-)}$, then one can verify that
$n^{2/3}(f_{n}(z)+s_{n}(z))$ lies in region $\widetilde\Gamma_j$
defined in (\ref{S1})-(\ref{S4}), if $n$ is sufficiently large.
Consequently we can use (\ref{RHP:A-c}) for $z\in\partial U^{(-)}$,
and we obtain
\begin{multline}\label{matching1}
P^{(-)}(z)=E^{(-)}(z)
(n^{2/3}(f_{n}(z)+s_{n}(z)))^{-\frac{\sigma_3}{4}}N\\
\times
               \left[I+\bigO(n^{-1})\right]
               e^{-\frac{2}{3}n(f_{n}(z)+s_{n}(z))^{3/2}\sigma_3}
e^{\frac{n}{2}\phi_{n}(z)\sigma_3}e^{-\frac{W_n(z)}{2}\sigma_3}.
\end{multline}
Substituting (\ref{bounds}), we find by (\ref{phi f}) and
$t\leq cn^{-\alpha}$ that (\ref{matching1}) simplifies to
\begin{equation}\label{matching2}
P^{(-)}(z)=E^{(-)}(z)
n^{-\frac{\sigma_3}{6}}f_{n}(z)^{-\frac{\sigma_3}{4}}N
               \left[I+\bigO(n^{-1})+\bigO(n^{-(k+1)\alpha})+\bigO(n^{1-2(k+1)\alpha})\right]e^{-
\frac{W_n (z)}{2}\sigma_3}.
\end{equation}
For $k>\frac{1}{2\alpha}$, we have
\begin{equation}\label{matching3}
P^{(-)}(z)=E^{(-)}(z)n^{-\frac{\sigma_3}{6}}f_{n}(z)^{-\frac{\sigma_3}{4}}N
               \left[I+\bigO(n^{-2\widehat\alpha})\right]
               e^{-\frac{W_n(z)}{2}\sigma_3},
\end{equation}
with $\widehat\alpha=\min\{\alpha,1/2\}$. Since we want $P^{(-)}$ to
match with the outside parametrix, we define
\begin{equation}\label{def E}
E^{(-)}(z)=P^{(\infty)}(z)e^{\frac{W_n(z)}{2}\sigma_3}N^{-1}f_{n}(z)^{\frac{\sigma_3}{4}}
n^{\frac{\sigma_3}{6}},
\end{equation}
and we need to check that $E^{(-)}$ is analytic in $\overline
U^{(-)}$, since, if not, the jump relations for $P^{(-)}$ would be
violated. By (\ref{Pinfty}), it is easily checked that, indeed,
(\ref{def E}) defines $E^{(-)}$ analytically in $\overline U^{(-)}$.

Since the function $W_n$ defined by (\ref{Wn}) is bounded on
$\partial U^{(\pm)}$, we have
\begin{equation}\label{matching5}
P^{(-)}(z)P^{(\infty)}(z)^{-1}=
               I+\bigO(n^{-2\widehat\alpha}),\qquad \mbox{ for $z\in\partial U^{(-)}, n\to\infty$}.
\end{equation}

\medskip

A similar construction works for the local parametrix $P^{(+)}$ near
$+i$ if we let $P^{(+)}$ be of the form
\begin{equation}\label{def P-}
P^{(+)}(z):=E^{(+)}(z) \sigma_3\widetilde
A_n(-n^{2/3}(f_{n}(z)+s_{n}(z)))\sigma_3
e^{n\frac{\phi_{n}(z)}{2}\sigma_3}e^{-\frac{W_n(z)}{2}\sigma_3},\qquad\mbox{for
$z$ in region $R_{j,n}'$},
\end{equation}  with region $R'_{1,n}$
being the one outside the lens and to the left of it, and the
regions $R'_{2,n}, R'_{3,n}, R'_{4,n}$ occur in order when turning
around $i$ in counterclockwise direction. One can mimic the
construction of $P^{(-)}$ with minor modifications such that
\begin{equation}\label{matching4}
P^{(+)}(z)P^{(\infty)}(z)^{-1}=
               I+\bigO(n^{-2\widehat\alpha}),\qquad \mbox{ for $z\in\partial U^{(+)}$, $n\to\infty$}.
\end{equation}

\subsection{Final transformation}
Now we define
\begin{equation}\label{def R}
R(z)=\begin{cases} S(z)P^{(\infty)}(z)^{-1},&\mbox{ for
$z\in\mathbb C\setminus U^{(\pm)}$,}\\
S(z)P^{(\pm)}(z)^{-1},&\mbox{ for $z\in U^{(\pm)}$.}\end{cases}
\end{equation}
Then $R$ is analytic in $\mathbb C\setminus \Sigma_R$ with
$\Sigma_R$ as shown in Figure \ref{sigmaR}, and it tends to $I$ as
$z\to\infty$.
\begin{figure}
\center
\def\svgwidth{8cm}
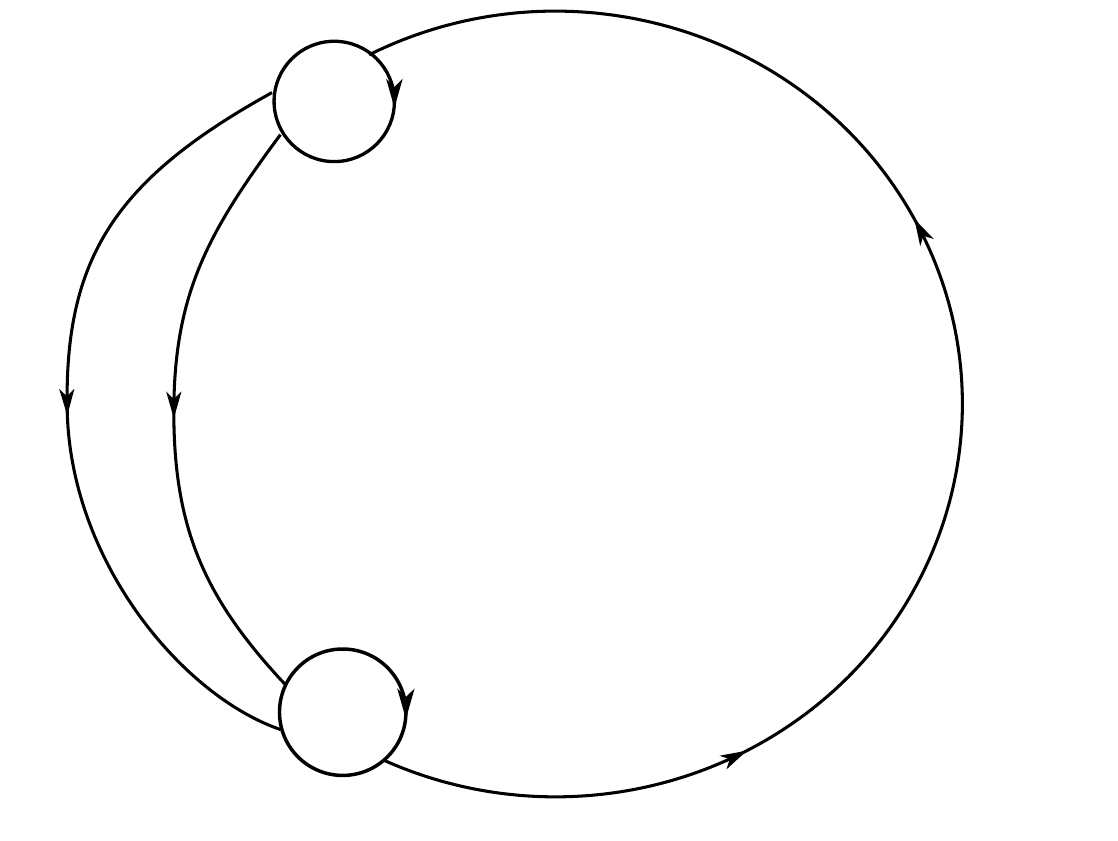
\caption{Contour $\Sigma_{R}$.} \label{sigmaR}
\end{figure}
On $\Sigma_R$, we have
\begin{equation}\label{jump R}
R_+(z)=R_-(z)\left(I+\bigO(n^{-2\widehat\alpha})\right),
\end{equation}
uniformly as $n\to\infty$. The $\bigO(n^{-2\widehat\alpha})$ error term is
present for $z\in\partial U^{(\pm)}$ because of (\ref{matching5})
and (\ref{matching4}). On $\Sigma_R\setminus \partial U^{(\pm)}$ the
jumps are even exponentially small as $n\to\infty$, see the
discussion about the RH problem for $S$ at the end of Section
\ref{opening}. Standard estimates in RH theory show that (\ref{jump
R}) implies existence of the RH solution $R$ for large $n$, and the
asymptotics
\begin{equation}\label{as R}
R(z)=I+\bigO(n^{-2\widehat\alpha}), \qquad\mbox{ as $n\to\infty$,}
\end{equation}
uniformly for $z\in\mathbb C\setminus\Sigma_R$. Reversing the
explicit transformations (\ref{def R}), (\ref{def S}), and (\ref{def
T}), we have existence of $Y$ for $n$ sufficiently large, and we can
also find asymptotics for $Y$ as $n\to\infty$. For $z$ outside the
lens-shaped region and outside $U^{(\pm)}$, we obtain
\begin{eqnarray}
Y_{11}(z)&=&S_{11}(z)e^{ng_{n}(z)}\nonumber\\
&=&\left((1+\bigO(n^{-2\widehat\alpha}))P_{11}^{(\infty)}(z)+\bigO(n^{-2\widehat\alpha})P_{21}^{(\infty)}(z)
\right)e^{ng_{n}(z)}, \label{Y11}
\end{eqnarray}
and
\begin{eqnarray}
Y_{12}(z)&=&S_{12}(z)e^{-ng_{n}(z)}e^{-2n\ell_{n}}\nonumber\\
&=&\left((1+\bigO(n^{-2\widehat\alpha}))P_{12}^{(\infty)}(z)+\bigO(n^{-2\widehat\alpha})P_{22}^{(\infty)}(z)
\right)e^{-ng_{n}(z)}e^{-2n\ell_{n}}, \label{Y12}
\end{eqnarray}

\section{Proof of the main results}
Let
\begin{align*}
r^{(\pm)}(z) &
=\frac12\left(\left(\frac{z+i}{z-i}\right)^{1/4}\pm\left(\frac{z+i}{z-i}\right)^
{-1/4}\right),\\
r_{n}^{(\pm)}(z) &
=\frac12\left(\left(\frac{z-b_{n}}{z-a_{n}}\right)^{1/4}\pm\left(\frac{z-b_{n}}
{z-a_{n}}\right)^{-1/4}\right),
\end{align*}
where the $1/4$-roots are defined outside of $\gamma_{1}$ and
$\gamma_{1,n}$ respectively, and tend to 1 as $z$ tends to infinity.
Note that, since $a_{n}=i(1+\O(n^{-\alpha}))$ and
$b_{n}=-i(1+\O(n^{-\alpha}))$,  as $n\to\infty$, we have
\begin{equation}\label{rn-r}
r_{n}^{(\pm)}(z)=r^{(\pm)}(z)(1+\O(n^{-\alpha})),\quad n\to\infty,
\end{equation}
locally uniformly in $\C\setminus\gamma_{1}$. Also, since $\widehat
z_{0}^{(2n)}=\O(n^{-\alpha})$, we have that
\begin{equation}\label{Dn-D}
D_{n}(z)=D(z)(1+\O(n^{-\alpha})),\quad n\to\infty,
\end{equation}
locally uniformly in $\C\setminus\gamma_{1}$, where
\begin{equation*}D(z)=\exp\left(\frac{R(z)}
   {2\pi i}\int_{\gamma_{1}}\frac{-\log(s)}{R_{+}(s)(s-z)}ds\right).\end{equation*}

We first prove the following proposition which gives asymptotic
estimates for the polynomials $P_{n}(z)$, $Q_{n}(z)$ and the error
function $E_{n}$ in the complex plane, respectively outside of the
curves $\gamma_{1}$, $\gamma_{2}$ and $(\pm i,\pm\infty) $.
\begin{proposition}\label{strong}
As $n\to\infty$, we have
\begin{equation}\label{asympPn}
P_{n}(z)=r^{(+)}(z)D^{-1}(z)e^{ng_{n}(z)}(1+\O(n^{-\alpha})),
\end{equation}
uniformly for $z$ in compact subsets of $\C\setminus\gamma_{1}$,
\begin{equation}\label{asympQn}
Q_{n}(z)=\begin{cases}
-r^{(+)}(z)D^{-1}(z)e^{n(g_{n}(z)-2z)}(1+\O(n^{-\alpha})),\quad &
z\in D_{0}\cup\gamma_{1}\setminus\{i,-i\},\\[10pt]
-i\Omega_{n}(z)r^{(-)}(z)D(z)e^{-n(g_{n}(z)+2l_{n})}(1+\O(n^{-\alpha})),\quad&
z\in\C\setminus\overline{D_{0}},
\end{cases}
\end{equation}
uniformly for $z$ in compact subsets of $\C\setminus\gamma_{2}$.
Furthermore, we have
\begin{equation}\label{asympEn}
E_{n}(z)=\begin{cases}
r^{(+)}(z)D^{-1}(z)e^{n(g_{n}(z)-z)}(1+\O(n^{-\alpha})),\quad &
z\in D_{1,\infty}\cup\gamma_{1}\setminus\{i,-i\},\\[10pt]
-i\Omega_{n}(z)r^{(-)}(z)D(z)e^{n(z-g_{n}(z)-2l_{n})}(1+\O(n^{-\alpha})),\quad&
z\in\C\setminus\overline{D_{1,\infty}},
\end{cases}
\end{equation}
uniformly for $z$ in compact subsets of $\C\setminus(\pm i,\pm
i\infty)$.
\end{proposition}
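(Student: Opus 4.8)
The plan is to undo the chain of explicit transformations $Y\mapsto T\mapsto S\mapsto R$ of Section~\ref{descent} and then read off $P_n$, $Q_n$ and $E_n$ via Proposition~\ref{prop RHP}. The only analytic input is the uniform bound $R(z)=I+\O(n^{-2\widehat\alpha})$ of~(\ref{as R}); since $0<\alpha\le1$ forces $2\widehat\alpha=2\min\{\alpha,1/2\}\ge\alpha$, this reads $R(z)=I+\O(n^{-\alpha})$, which is the source of the error terms. Inverting~(\ref{def R}), (\ref{def S}) and~(\ref{def T}) gives, for $z$ outside the lens and outside $U^{(\pm)}$,
\[
Y(z)=e^{-n\ell_n\sigma_3}\,R(z)P^{(\infty)}(z)\,e^{ng_n(z)\sigma_3}\,e^{n\ell_n\sigma_3},
\]
with the same identity with $P^{(\infty)}$ replaced by the Airy parametrix $P^{(\pm)}$ inside $U^{(\pm)}$. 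By Proposition~\ref{prop RHP} this yields $P_n=Y_{11}$ everywhere, $Q_n=\Omega_nY_{12}$ outside $\Gamma_n$, $e^{-nz}E_n=\Omega_nY_{12}$ inside $\Gamma_n$, together with the elementary identity $E_n=P_ne^{-nz}+Q_ne^{nz}$.

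Next I would make $P^{(\infty)}$ explicit: from~(\ref{Pinfty})–(\ref{defN}) a direct $2\times2$ computation, using the definition of $r_n^{(\pm)}$, gives
\[
P^{(\infty)}(z)=\begin{pmatrix}r_n^{(+)}(z)D_n(z)^{-1}&-i\,r_n^{(-)}(z)D_n(z)\\[4pt]i\,r_n^{(-)}(z)D_n(z)^{-1}&r_n^{(+)}(z)D_n(z)\end{pmatrix}.
\]
Both $r^{(+)}$ and $r^{(-)}$ are non-vanishing on $\C\setminus\gamma_1$ — since $\big(\tfrac{z+i}{z-i}\big)^{1/4}\notin\{\pm1,\pm i\}$, as a value in that set would give $\tfrac{z+i}{z-i}=1$ — and $D_n$ is non-vanishing, so on compact subsets away from $\gamma_1$ the entries of $P^{(\infty)}$ are bounded and bounded away from $0$. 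Hence, away from the lens and from $U^{(\pm)}$, $S_{11}=r_n^{(+)}D_n^{-1}(1+\O(n^{-\alpha}))$ and $S_{12}=-i\,r_n^{(-)}D_n(1+\O(n^{-\alpha}))$, and by~(\ref{rn-r}) and~(\ref{Dn-D}) one may pass to $r^{(\pm)}$ and $D$ at the price of a further $\O(n^{-\alpha})$; the factors $e^{\pm ng_n}$, $e^{-2n\ell_n}$, $\Omega_n$ carry the explicit $n$ and are kept untouched. For $z\in U^{(\pm)}$ at a fixed positive distance from $\pm i$, the Airy asymptotics~(\ref{RHP:A-c}) together with the definition~(\ref{def E}) of $E^{(\pm)}$ give $P^{(\pm)}(z)=P^{(\infty)}(z)(I+\O(n^{-\alpha}))$, exactly as in~(\ref{matching3}) and~(\ref{matching5})–(\ref{matching4}), so the same leading formulas hold there; the points $\pm i$ are excluded throughout.

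It remains to decide, region by region, which exponential survives. From $P_n=S_{11}e^{ng_n}$ one reads off~(\ref{asympPn}) on compacts of $\C\setminus\gamma_1$. For $z$ outside $\Gamma_n$ (which for $n$ large covers compacts of $\C\setminus\overline{D_0}$), $Q_n=\Omega_nY_{12}=\Omega_nS_{12}e^{-ng_n-2n\ell_n}$ gives the second line of~(\ref{asympQn}). For $z$ in the interior of $D_0$ (inside $\Gamma_n$), $E_n=\Omega_ne^{nz}Y_{12}=-i\Omega_nr^{(-)}De^{n(z-g_n-2\ell_n)}(1+\O(n^{-\alpha}))$, and then $Q_n=e^{-nz}E_n-e^{-2nz}P_n$; writing~(\ref{phi_n}) as $2n(z-g_n-\ell_n)=n\phi_n-\sum_{j=1}^{2n}\log(z-\widehat z_j^{(2n)})$ shows the first term equals $-i(z-\widehat z_0^{(2n)})\tfrac{r^{(-)}}{r^{(+)}}D^2e^{n\phi_n}$ times the second, which is exponentially small since $\Re\phi<0$ on $D_0$ by Lemma~\ref{lem-sgn-phi} and $\phi_n\to\phi$ by Proposition~\ref{prop-phin}; thus $Q_n=-r^{(+)}D^{-1}e^{n(g_n-2z)}(1+\O(n^{-\alpha}))$, the first line of~(\ref{asympQn}). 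The same bookkeeping for $E_n=Y_{11}e^{-nz}+\Omega_nY_{12}e^{nz}$ with $z$ outside $\Gamma_n$ shows $Y_{11}e^{-nz}$ dominates where $\Re\phi<0$ (on $D_{1,\infty}$), giving the first line of~(\ref{asympEn}), while $\Omega_nY_{12}e^{nz}$ dominates where $\Re\phi>0$ (on $D_{2,\infty}$); combined with $E_n=\Omega_ne^{nz}Y_{12}$ for $z$ inside $\Gamma_n$ on $D_0$ (and $D_0\subset\C\setminus\overline{D_{1,\infty}}$) this yields the second line. Validity on the arcs $\gamma_1$ (for $Q_n$ and $E_n$), and at interior points of $\gamma_1,\gamma_2$, follows by passing to boundary values and using continuity of the entire functions $P_n,Q_n,E_n$ with the lens factorisation of Section~\ref{opening}. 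Up to the error being $\O(n^{-\alpha})$ rather than $\O(n^{-1})$, the argument is parallel to~\cite{W}.

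The main obstacle I anticipate is not any single estimate but the region bookkeeping: in each of the finitely many pieces cut out by $\gamma_1$, $\gamma_2$, the lens contours, $\Gamma_n$ and $\partial U^{(\pm)}$, one has to check that the correct representation of $S$ is used (via $P^{(\infty)}$ or $P^{(\pm)}$, on the appropriate side of the lens, inside or outside $\Gamma_n$) and that exactly the claimed exponential survives — which is where the sign data of Lemma~\ref{lem-sgn-phi}, transported to $\phi_n$ by Proposition~\ref{prop-phin}, must line up with the case splits $D_0\,/\,\C\setminus\overline{D_0}$ and $D_{1,\infty}\,/\,\C\setminus\overline{D_{1,\infty}}$ in the statement. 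The most delicate sub-case is the behaviour right on $\gamma_1$ and $\gamma_2$, where $\Re\phi=0$ and the two exponential contributions are of the same order, so the conclusion there can only be drawn from continuity of the polynomials and from matching the boundary values of $g_n$, $\phi_n$, $r^{(\pm)}$ and $D$ across these arcs.
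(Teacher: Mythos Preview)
Your proposal is correct and follows essentially the same route as the paper's proof: unwind $Y\mapsto T\mapsto S\mapsto R$, read off $Y_{11}$ and $Y_{12}$ via $R=I+\O(n^{-2\widehat\alpha})$ and the explicit entries $P_{11}^{(\infty)}=r_n^{(+)}D_n^{-1}$, $P_{12}^{(\infty)}=-ir_n^{(-)}D_n$, pass to $r^{(\pm)}$, $D$ via (\ref{rn-r})--(\ref{Dn-D}), and then in the mixed regions decide which exponential dominates using $\Re\phi$ through Proposition~\ref{prop-phin} and Lemma~\ref{lem-sgn-phi}. Your handling of points in $U^{(\pm)}$ at fixed distance from $\pm i$ via the Airy asymptotics, and your explicit rewriting of the ratio of the two competing terms as a bounded factor times $e^{n\phi_n}$, are slightly more detailed than the paper's presentation but amount to the same argument; the boundary cases on $\gamma_1$, $\gamma_2$ are likewise deferred to continuity and the analysis in \cite{W}, exactly as in the paper.
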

\begin{proof}
The outside parametrix $P^{(\infty)}(z)$ depends on the endpoints
$a_{n}$ and $b_{n}$. We know that $a_{n}\to i$, $b_{n}\to -i$ as
$n\to\infty$. In view of (\ref{Pinfty}), we can then conclude that
the entries of $P^{(\infty)}(z)$ have modulus uniformly bounded
below and above in compact subsets of $\C\setminus\gamma_{1}$, as
$n\to\infty$. Since $P_{n}(z)=Y_{11} (z)$, we thus get from
(\ref{Y11}) that, locally uniformly,
\begin{equation*}P_{n}(z)=P_{11}^{(\infty)}(z)e^{ng_{n}(z)}(1+\O(n^{-2\widehat\alpha})).\end{equation*}
Moreover, it follows from (\ref{Pinfty}) that
\begin{equation*}P_{11}^{(\infty)}(z)=r_{n}^{(+)}(z)D_{n}^{-1}(z),\end{equation*}
which implies (\ref{asympPn}) because of (\ref{rn-r}) and
(\ref{Dn-D}).

For $Q_{n}$, three different cases need to be considered. First, we
assume $z\in\C \setminus\overline{D_{0}}$. Then we can assume that
the curve $\widetilde\gamma_{2}$ is such that $z$ lies outside the
contour $\Gamma$. From (\ref{eq:Yout}) and (\ref {Y12}), we get
\begin{equation*}Q_{n}(z)=Y_{12}(z)\Omega_{n}(z)=P_{12}^{(\infty)}(z)
\Omega_{n}(z)
e^{-ng_{n}(z)}e^{-2n\ell_{n}}(1+\bigO(n^{-2\widehat\alpha})).\end{equation*}
From the fact that
\begin{equation*}P_{12}^{(\infty)}(z)=-ir_{n}^{(-)}(z)D_{n}(z),\end{equation*}
and using (\ref{rn-r})-(\ref{Dn-D}), we obtain the second estimate
in (\ref{asympQn}). For $z\in D_{0}$, we use the fact that
\begin{equation}Q_{n}(z)=e^{-nz}E_{n}(z)-e^{-2nz}P_{n}(z),\label{QPE}\end{equation}
and we need to find out the dominant term as $n$ gets large. From
(\ref{asympPn}), we have that, as $n$ tends to infinity,
\begin{equation*}\frac1n\log|e^{-2nz}P_{n}(z)|=\Re(g_{n}(z)-2z)+\O(n^{-1}).\end{equation*} Similarly,
from the second formula in (\ref{asympEn}), which we will prove next
and independently, we obtain
\begin{equation*}\frac1n\log|e^{-nz}E_{n}(z)|=\Re(-g_{n}(z)-2l_{n}+\frac1n\log\Omega_{n}
(z))+\O(n^{-1}).\end{equation*} The difference between the
right-hand sides of the two previous estimates equals
$-\Re(\phi_{n}(z))+\O(n^{-1})$ which, in view of Proposition
\ref{prop-phin} and Lemma \ref{lem-sgn-phi}, is positive, locally
uniformly in $D_{0}$, for $n$ large. This implies that the dominant
contribution in (\ref{QPE}) comes from the term $-e^{-2nz}P_n(z)$.
Hence, the first estimate in (\ref{asympQn}) for $z\in D_{0}$
follows from (\ref {asympPn}). It remains to check that the previous
estimate still holds true when $z\in\gamma_{1}\setminus\{-i,i\}$. We
do not give the details here and we simply refer to \cite[Theorem
2.10]{W} where a proof of a similar assertion is given.

For the error function $E_{n}$, and for $z\in D_{0}$, we have
\begin{equation*}E_{n}(z)=e^{nz}\Omega_{n}(z)Y_{12}(z)=P_{12}^{(\infty)}(z)\Omega_{n}
(z)
e^{n(z-g_{n}(z))}e^{-2n\ell_{n}}(1+\bigO(n^{-2\widehat\alpha})),\end{equation*}
which leads to the second estimate in (\ref{asympEn}) when $z\in
D_{0}$. When $z$ is in $D_{1,\infty}$ or $D_{2,\infty}$, we use that
\begin{equation*}E_{n}(z)=P_{n}(z)e^{-nz}+Q_{n}(z)e^{nz}\end{equation*}
and find out which term is dominant in the sum. Using the previous
estimates (\ref {asympPn}) and (\ref{asympQn}), along with
Proposition \ref{prop-phin} and Lemma \ref {lem-sgn-phi}, it turns
out that $P_{n}(z)e^{-nz}$ dominates in $D_{1,\infty}$ while
$Q_{n}(z)e^{nz}$ dominates in $D_{2,\infty}$. Then, (\ref{asympPn})
and (\ref{asympQn}) are used to derive (\ref{asympEn}) in
$D_{1,\infty}$ and $D_{2,\infty}$. Finally, one checks that these
estimates are also valid on the curves $\gamma_{1}$ and $\gamma_
{2}$.
\end{proof}
\subsection{Proof of Theorem \ref{weak-lim}}
\begin{proof}
Applying Rouch\'e theorem on a circle sufficiently large to contain
the curve $\gamma_ {1}$, we obtain, in view of the asymptotic
estimate (\ref{asympPn}), that for $n$ sufficiently large, the
difference between the numbers of poles and zeros of $P_ {n}$
outside of the circle equals the corresponding difference for the
product of functions in the right-hand side of (\ref{asympPn}). This
product has no zero (note that, in view of (\ref{gn}),
$\Re(g_{n})(z)$ is lower bounded if $z$ stays at some distance from
$\gamma_ {1}$) but $e^{ng_n(z)}$ has a pole of order $n$ at
infinity, since $g_{n}(z)=\log z+\O(1)$ as $z\to\infty$. Since
$P_{n}$ has a pole of multiplicity $n$ at infinity, we may thus
conclude that $P_{n}$ has no zero outside of the circle for $n$
large enough. A similar argument using Rouch\'e's theorem shows
that, for any compact in $\mathbb C\setminus \gamma_1$, there exist
$n_0$ such that $P_n$ has no zeros in that compact for $n\geq n_0$.

Now, let us consider the sequence of counting measures
$\nu_{P_{n}}$, $n>0$. We already know that, for $n$ large enough,
all these measures are supported inside a fixed compact set. From
Helly's selection theorem, we may thus select a subsequence of
$\nu_{P_{n}}$ converging in weak-* sense to a measure $\nu$.
Besides, from (\ref{asympPn}), it follows that, as $n\to\infty$,
\begin{equation*}\frac1n\log|P_{n}(z)|=\Re(g_{n}(z))+\O(n^{-1}),
\quad z\in\C\setminus\gamma_{1},\end{equation*} and from the
dominated convergence theorem, we see that the sequence of functions
$g_{n}(z)$ tends to $g(z)$, defined in (\ref{def g0}), point-wise in
$\C\setminus\gamma_ {1}$. Hence,  as $n\to\infty$,
\begin{equation*}\frac1n\log|P_{n}(z)|\to\Re(g(z)),
\quad z\in\C\setminus\gamma_{1},\end{equation*} or equivalently,
\begin{equation*}\int\log|z-s|d\nu_{P_{n}}(s)\to\Re(g(z)),
\quad z\in\C\setminus\gamma_{1}.\end{equation*} Since $\nu_{P_{n}}$
tends to $\nu$ and $P_{n}$ has no zero inside any compact set of
$\C\setminus\gamma_{1}$ for $n$ large enough, the above integral
also tends to $\int\log|z-s|d\nu(s)$, point-wise in $\C\setminus
\gamma_{1}$, as $n\to\infty$, so that
\begin{equation*}\int\log|z-s|d\nu(s)=\int\log|z-s|d\mu_{P}(s),\quad z\in\C\setminus
\gamma_{1},\end{equation*} where we recall that the function $g$ is
the complex logarithmic potential associated to the measure
$d\mu_{P}$. Since $\gamma_{1}$ has two-dimensional Lebesgue measure
0, the unicity theorem \cite[Theorem II.2.1]{SaTo} applies, showing
that $\nu$ and $ \mu_{P}$ are equal. Since $\mu_{P}$ is the only
possible limit of a weakly convergent subsequence, the full sequence
$\nu_{P_{n}}$ converges weakly to $\mu_{P}$.

The similar result for the sequence of counting measures
$\nu_{Q_{n}}$, $n>0$, follows from the symmetry of our interpolation
problem, already mentioned at the end of the proof of Corollary
\ref{normal}.
\end{proof}
\subsection{Proof of Theorem \ref{main thm}}
Before we start the proof, we need a few preliminary results.
\begin{lemma}
Let $z,u\in\C\setminus\gamma_{n,1}$. Then, we have
\begin{equation}
\frac{R_{n}(z)}{2\pi
i}\int_{\gamma_{n,1}}\frac{\log(u-s)}{R_{n,+}(s)(s-z)}ds=
\frac12\log\left(\frac{(w_{1}(u)-w_{2}(z))(w_{1}(z)-w_{2}(u))}{2w_{2}(z)+a_{n}+b_{n}}\right),
\end{equation}
\begin{equation}
\frac{1}{2\pi
i}\int_{\gamma_{n,1}}\frac{(a_{n}+b_{n}-2s)\log(u-s)}{R_{n,+}(s)}ds=
\frac12\frac{(a_{n}-b_{n})^{2}}{2w_{2}(z)+a_{n}+b_{n}},
\end{equation}
where
\begin{equation}\label{def-w}
w_{1}(s)=-s+R_{n}(s),\quad w_{2}(s)=-s-R_{n}(s),\quad s\in\C\setminus
\gamma_{1,n}.
\end{equation}
\end{lemma}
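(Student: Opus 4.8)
The plan is to prove both identities the way one evaluates Szeg\H{o}-type functions: regard each side as an analytic function of the free variable off $\gamma_{1,n}$, identify the additive jump across $\gamma_{1,n}$, control the behaviour at the branch points $a_n,b_n$ and at $\infty$, and conclude with Liouville's theorem. The only algebra needed is $w_1(s)+w_2(s)=-2s$ and $w_1(s)w_2(s)=(a_n+b_n)s-a_nb_n$, whence $(2w_1(s)+a_n+b_n)(2w_2(s)+a_n+b_n)=(a_n-b_n)^2$, together with the observation that crossing $\gamma_{1,n}$ merely interchanges $w_1$ and $w_2$ (since $R_n\mapsto-R_n$ there). First, though, I would check that the right-hand side of the first identity is single-valued and analytic on $\C\setminus\gamma_{1,n}$: here $w_1,w_2$ are the two branches of $w(z)=-z\pm R_n(z)$, i.e.\ the two solutions of $w^2+2zw+(a_n+b_n)z-a_nb_n=0$, and solving this relation for $z$ exhibits $z$ as a rational function of $w$, so $(z,w)\mapsto w$ is injective on the graph and the two branches have disjoint images over $\C\setminus\gamma_{1,n}$. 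This gives $w_1(u)\ne w_2(z)$, $w_1(z)\ne w_2(u)$ and $2w_2(z)+a_n+b_n\ne0$ on $\C\setminus\gamma_{1,n}$, and since that set is simply connected a branch of the outer logarithm is well defined.

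For the first identity, write $G(z)$ and $H(z)$ for the two sides. Applying Sokhotski--Plemelj to the Cauchy transform $z\mapsto\frac{1}{2\pi i}\int_{\gamma_{1,n}}\frac{\log(u-s)}{R_{n,+}(s)(s-z)}\,ds$ and using $R_{n,+}=-R_{n,-}$ gives the ``plus--minus'' jump relation
\begin{equation*}
G_+(s)+G_-(s)=\log(u-s),\qquad s\in\gamma_{1,n}.
\end{equation*}
Substituting $w_1\leftrightarrow w_2$ in $H$ and using the algebraic identities above, a short symmetric-function computation shows that the product of the two boundary values of the argument of the logarithm equals $(s-u)^2$, hence $H_+(s)+H_-(s)=\log(u-s)$ up to an additive constant. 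So $G$ and $H$ have the same plus--minus jump up to a constant; absorbing half of it into $H$, the difference satisfies $(G-H)_+=-(G-H)_-$ on $\gamma_{1,n}$, so $(G-H)/R_n$ has no jump across $\gamma_{1,n}$ and extends analytically over it. Near $a_n,b_n$ both $G$ and $H$ stay bounded (for $G$ because the factor $R_n(z)$ cancels the endpoint singularity of the Cauchy transform, for $H$ because the argument of the logarithm has a finite nonzero limit there), so $(G-H)/R_n$ is $\bigO(|z-a_n|^{-1/2})$ at worst and, being single-valued, is actually analytic at $a_n,b_n$; thus it is entire. Since $R_n(z)\sim z$ while $G$ and $H$ have finite limits at $\infty$, it vanishes at $\infty$, hence identically, and comparison of the two sides at $z=\infty$ (where $G(\infty)=-\frac{1}{2\pi i}\int_{\gamma_{1,n}}\frac{\log(u-s)}{R_{n,+}(s)}\,ds$) fixes the leftover constant to $0$.

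For the second identity the quickest route is a direct computation. Writing $\frac{a_n+b_n-2s}{R_{n,+}(s)}=-2R_{n,+}'(s)$ and integrating by parts (the boundary terms at $a_n,b_n$ vanish because $R_n(a_n)=R_n(b_n)=0$), the left-hand side collapses to a multiple of $\int_{\gamma_{1,n}}\frac{R_{n,+}(s)}{u-s}\,ds$; this integral I would evaluate by deforming a large positively oriented circle onto $\gamma_{1,n}$, collecting the residue at $s=u$ (contribution $R_n(u)$) and the $1/s$-coefficient of $R_n$ at infinity (contribution $-\frac12(a_n+b_n-2u)$), which gives $\frac{\pi i}{2}(2w_1(u)+a_n+b_n)$; rearranging with $(2w_1+a_n+b_n)(2w_2+a_n+b_n)=(a_n-b_n)^2$ produces the stated closed form.

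I expect the genuine difficulty to be bookkeeping rather than strategy. One must be scrupulous about the branches of the logarithms (in the integrands and in the right-hand side) so that the $2\pi i$- and factor-of-$2$ ambiguities hiding in ``$\frac12\log(\cdot)^2$'' cancel exactly and the residual additive constant is truly $0$, and about the orientation of $\gamma_{1,n}$ in the residue step; and the whole argument rests on the non-vanishing of $w_1(u)-w_2(z)$, $w_1(z)-w_2(u)$ and $2w_2(z)+a_n+b_n$ off $\gamma_{1,n}$, which is exactly where the rational parametrisation of the conic $w^2+2zw+(a_n+b_n)z-a_nb_n=0$ enters.
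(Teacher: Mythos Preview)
Your proposal is sound and reaches the result, but by a different route than the paper indicates. The paper's (brief) proof is a direct computation: make the change of variables $s\mapsto w$ via the algebraic relation $w^{2}+2sw+(a_{n}+b_{n})s-a_{n}b_{n}=0$, which uniformizes the two-sheeted Riemann surface of $R_{n}$ and turns the integral along the cut $\gamma_{1,n}$ into a contour integral in the $w$-plane, to which Cauchy's formula applies in one stroke. For the first identity you instead run a Liouville argument in the $z$-variable---match the additive jump $G_{+}+G_{-}=\log(u-s)$ on both sides, check boundedness at $a_{n},b_{n}$ and at $\infty$, divide the difference by $R_{n}$---and for the second you integrate by parts and finish with a residue computation. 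Your approach is more structural (it is precisely the Szeg\H{o}-function mechanism, and explains \emph{why} a closed form must exist), but it leaves you with an additive constant to fix; your proposed way of doing so---evaluating $G(\infty)=-\frac{1}{2\pi i}\int_{\gamma_{1,n}}\frac{\log(u-s)}{R_{n,+}(s)}\,ds$---is itself a nontrivial integral identity that you have not computed, and would require either a second Liouville argument in the variable $u$ or, ironically, the paper's $s\mapsto w$ substitution. Since you already have the conic parametrization in hand (you use it to show the argument of the logarithm is nonvanishing), the paper's route is right next door and delivers the answer, constant included, in a single computation.
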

\begin{proof}
We will not give the details for the proof of these formulas. We
just mention that the computations can be performed by using the
change of variables $s\to w$, where $s$ and $w$ satisfy the
algebraic equation
\begin{equation*}w^{2}+2sw+(a_{n}+b_{n})s-a_{n}b_{n}=(w-w_{1}(s))(w-w_{2}(s))=0,\end{equation*}
and then applying the Cauchy formula in the $w$-plane.
\end{proof}
\begin{proposition}\label{prop gD}
The functions $g_{n}$, $D_{n}^{2}$ and the constant $2\ell_{n}$ admit the following explicit
expressions,
\begin{align}\label{expl-g}
g_{n}(z) & =-\frac12\frac{(a_{n}-b_{n})^{2}}{2w_{2}(z)+a_{n}+b_{n}}
+\frac{1}{2n}\sum_{j=1}^{2n}\log\left( \frac{(w_{2}(\widehat
z_{j}^{(2n)})-w_{1}(z)) (w_{1}(\widehat
z_{j}^{(2n)})-w_{2}(z))}{2w_{2}(\widehat
z_{j}^{(2n)})+a_{n}+b_{n}}\right),
\\\label{expl-D}
D_{n}^{2}(z) & =\frac{-2w_{2}(z)-a_{n}-b_{n}}{(w_{2}(\widehat z_{0}^{(2n)})-
w_{1}(z)) (w_{1}(\widehat
z_{0}^{(2n)})-w_{2}(z))},
\\\label{expl-l}
2\ell_{n} & =a_{n}+b_{n}-\frac{1}{2n}\sum_{j=1}^{2n}\log\left(
\frac{2w_{1}(\widehat z_{j}^{(2n)})+a_{n}+b_{n}}
{2w_{2}(\widehat z_{j}^{(2n)})+a_{n}+b_{n}}\right).
\end{align}
\end{proposition}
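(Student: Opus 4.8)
The plan is to substitute into the integral representations of $g_{n}$ and $D_{n}$ the two contour-integral evaluations supplied by the preceding Lemma, and then to read off $\ell_{n}$ from the variational identity (\ref{var eq 1b}). I would start with $g_{n}$: combining (\ref{density}) and (\ref{gn}) one has $g_{n}(z)=-\frac{1}{2\pi i}\int_{\gamma_{1,n}}\frac{h_{n}(s)\log(z-s)}{R_{n,+}(s)}\,ds$, and by (\ref{h}) the weight $h_{n}$ splits into a polynomial part $a_{n}+b_{n}-2s$ and a partial-fraction part $\frac1n\sum_{j=1}^{2n}\frac{R_{n}(\widehat z_{j}^{(2n)})}{\widehat z_{j}^{(2n)}-s}$. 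The polynomial part is evaluated directly by the second formula of the Lemma (at $u=z$) and yields the first term of (\ref{expl-g}); for each $j$, the partial-fraction integral $\int_{\gamma_{1,n}}\frac{\log(z-s)}{R_{n,+}(s)(\widehat z_{j}^{(2n)}-s)}\,ds$ is, up to sign, the first formula of the Lemma with its two external points taken to be $z$ and $\widehat z_{j}^{(2n)}$. The decisive point is that the $R_{n}(\widehat z_{j}^{(2n)})$ appearing in the denominator of the Lemma's output cancels the coefficient $R_{n}(\widehat z_{j}^{(2n)})$ carried along from $h_{n}$, so the sum over $j$ collapses exactly to the logarithmic sum in (\ref{expl-g}).

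For $D_{n}^{2}$, I would use (\ref{Dn}) and (\ref{Wn}) to write $\log D_{n}(z)=-\frac{R_{n}(z)}{2\pi i}\int_{\gamma_{1,n}}\frac{\log(s-\widehat z_{0}^{(2n)})}{R_{n,+}(s)(s-z)}\,ds$. Rewriting $\log(s-\widehat z_{0}^{(2n)})=\log(\widehat z_{0}^{(2n)}-s)+\text{const}$, the main integral is given by the first formula of the Lemma (at $u=\widehat z_{0}^{(2n)}$), and the left-over constant term is disposed of by the elementary residue identity $\frac{R_{n}(z)}{2\pi i}\int_{\gamma_{1,n}}\frac{ds}{R_{n,+}(s)(s-z)}=-\frac12$ (which follows from $R_{n,+}=-R_{n,-}$ on $\gamma_{1,n}$, the decay $R_{n}(z)\sim z$ at infinity, and the residue at $s=z$). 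Exponentiating, squaring, and simplifying via $w_{1}(z)-w_{2}(\widehat z_{0}^{(2n)})=-(w_{2}(\widehat z_{0}^{(2n)})-w_{1}(z))$ gives (\ref{expl-D}).

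It remains to extract $2\ell_{n}$. Since $\phi_{n}(a_{n})=0$ by (\ref{def phi}), the definition (\ref{phi_n}) gives $2\ell_{n}=-2g_{n}(a_{n})+2a_{n}+\frac1n\sum_{j=1}^{2n}\log(a_{n}-\widehat z_{j}^{(2n)})$. I would then insert $z=a_{n}$ in the formula (\ref{expl-g}) just obtained: there $R_{n}(a_{n})=0$, hence $w_{1}(a_{n})=w_{2}(a_{n})=-a_{n}$, and repeated use of the algebraic relations $w_{1}(s)+w_{2}(s)=-2s$ and $w_{1}(s)w_{2}(s)=(a_{n}+b_{n})s-a_{n}b_{n}$ (which also give $(w_{1}(\widehat z_{j}^{(2n)})+a_{n})(w_{2}(\widehat z_{j}^{(2n)})+a_{n})=(a_{n}-b_{n})(a_{n}-\widehat z_{j}^{(2n)})$ and $(2w_{1}(\widehat z_{j}^{(2n)})+a_{n}+b_{n})(2w_{2}(\widehat z_{j}^{(2n)})+a_{n}+b_{n})=(a_{n}-b_{n})^{2}$) reduces the expression, after a symmetrisation in $w_{1}\leftrightarrow w_{2}$, to (\ref{expl-l}). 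Alternatively, one can compute $g_{n,+}+g_{n,-}$ on $\gamma_{1,n}$ directly from (\ref{expl-g}), using that crossing $\gamma_{1,n}$ interchanges the boundary values of $w_{1}(z)$ and $w_{2}(z)$, and then invoke (\ref{var eq 1b}).

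The only genuine obstacle is the bookkeeping of branches. The formulas of the Lemma, the branch of the logarithm fixed in (\ref{gn}), the branch of $R_{n}$ chosen in (\ref{R}), and the branch conventions in (\ref{var eq 1b}) each carry a potential additive multiple of $i\pi$, and the substitutions above produce, per interpolation point, terms such as $\log(w_{1}(z)-w_{2}(\widehat z_{j}^{(2n)}))$ versus $\log(w_{2}(\widehat z_{j}^{(2n)})-w_{1}(z))$, or $\log(s-\widehat z_{0}^{(2n)})$ versus $\log(\widehat z_{0}^{(2n)}-s)$; one must check that all of these reconcile so that (\ref{expl-g})--(\ref{expl-l}) hold on the nose rather than merely up to constants. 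The cleanest way to pin the constants down is to test $z\to\infty$: the normalisation $g_{n}(z)=\log z+\O(1)$ fixes the additive constant in (\ref{expl-g}), the known leading behaviour of $D_{n}$ at infinity fixes the multiplicative constant in (\ref{expl-D}), and (\ref{var eq 1b}) then determines (\ref{expl-l}).
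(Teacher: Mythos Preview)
Your proposal is correct and follows essentially the same approach as the paper: the paper's proof simply says that (\ref{expl-g}) and (\ref{expl-D}) follow from the preceding Lemma together with the integral representations (\ref{density})--(\ref{gn}) and (\ref{Dn}), and that (\ref{expl-l}) is obtained by plugging (\ref{expl-g}) into (\ref{var eq 1b}). You have spelled out precisely these steps, including the split of $h_{n}$, the cancellation of $R_{n}(\widehat z_{j}^{(2n)})$, and the determination of constants via the behaviour at infinity; your alternative for $\ell_{n}$ via the $w_{1}\leftrightarrow w_{2}$ interchange across $\gamma_{1,n}$ is in fact exactly what ``plugging into (\ref{var eq 1b})'' amounts to.
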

\begin{proof}
The proofs of (\ref{expl-g}) and (\ref{expl-D}) simply follow from the previous lemma together with the
expression (\ref {density})-(\ref{gn}) of $g_{n}$ and the expression
(\ref{Dn}) of $D_{n}$. The proof of (\ref{expl-l}) follows by plugging (\ref{expl-g}) into (\ref{var eq 1b}) and performing a few calculations that we do not detail.
\end{proof}
{\bf Proof of Theorem \ref{main thm}.} Assertion (i) is a
consequence of the strong asymptotics obtained in Proposition \ref
{strong}. In order to prove (\ref{limpq}) we need to evaluate the
asymptotic estimates (\ref {asympPn}) and (\ref{asympQn}) with $z$
replaced by $z/2n$ where $z$ is a fixed complex number. We get
\begin{align}\label{Pn-norm}
P_{n}\left(\frac{z}{2n}\right) &
=r^{(+)}(0)D^{-1}(0)e^{ng_{n}\left(\frac{z}{2n}\right)}
(1+\O(n^{-\alpha})),\\[10pt]\label{Qn-norm}
Q_{n}\left(\frac{z}{2n}\right) &
=-r^{(+)}(0)D^{-1}(0)e^{ng_{n}\left(\frac{z}{2n}\right)}e^{-z}
(1+\O(n^{-\alpha})).
\end{align}
Since the normalization chosen in Theorem \ref{main thm} is such
that $q_{n}(0)=1$, we can deduce from
(\ref{Pn-norm})-(\ref{Qn-norm}) that
\begin{align}\label{pn-norm}
p_{n}(z) &
=-e^{n\left(g_{n}\left(\frac{z}{2n}\right)-g_{n}(0)\right)}
(1+\O(n^{-\alpha})),\\[10pt]\label{qn-norm}
q_{n}(z) &
=e^{n\left(g_{n}\left(\frac{z}{2n}\right)-g_{n}(0)\right)}e^{-z}
(1+\O(n^{-\alpha})).
\end{align}
As $g_{n}$ tends to $g$ uniformly in a neighbourhood of 0, we have
that
\begin{equation}\label{dev-ngn}
ng_{n}\left(\frac{z}{2n}\right)=ng_{n}(0)+g_{n}'(0)\frac{z}{2}+\O\left(\frac1n\right),
\end{equation}
where the $\O(1/n)$ term is uniform in $z$. Finally, $g_{n}'(0)$
tends to
\begin{equation*}
g'(0)=-\frac{1}{i\pi}\int_{\gamma_{1}}\frac{(\sqrt{s^{2}+1})_{+}}{s^{2}}ds,
\end{equation*}
which, by Cauchy theorem, is easily computed to be 1. Hence, by
plugging (\ref{dev-ngn}) into (\ref{pn-norm})-(\ref{qn-norm}), the
limits (\ref{limpq}) follows, and the convergence is locally uniform
in $\C$.

For the third assertion, note that
\[e^z+r_n(z)=e^z+\frac{P_n(\frac{z}{2n})}{Q_n(\frac{z}{2n})}=e^{\frac{z}{2}}\frac{E_n(\frac{z}{2n})}{Q_n(\frac{z}{2n})}.\]
Substituting (\ref{asympQn}) and (\ref{asympEn}), we obtain by (\ref{rn-r}) and (\ref{Dn-D})
\begin{equation}
e^z+r_n(z)=i\frac{r^{(-)}(0)}{r^{(+)}(0)}\frac{w_{2n+1}(z)}{(2n)^{2n+1}}D(0)^2e^{2z}e^{-2ng_n(\frac{z}{2n})}e^{-2n\ell_n}\left(1+\bigO(n^{-\alpha})\right).
\end{equation}
Evaluating $r^{(\pm)}(0)$ and $D(0)$ using Proposition \ref{prop gD}, and expanding $ng_n(\frac{z}{2n})$, we find
\begin{equation}\label{estim-err}
e^z+r_n(z) =\frac{1}{2}\frac{w_{2n+1}(z)}{(2n)^{2n+1}}e^{z}e^{-2ng_n(0)}
e^{-2n\ell_n}\left(1+\bigO(n^{-\alpha})\right)
\end{equation}
where we have also used that $g'(0)=1$.
From (\ref{expl-g}) and (\ref{expl-l}) we can derive an explicit expression for $2g_n(0)+2\ell_n$, namely,
\begin{equation}\label{expl-c}
2g_n(0)+2\ell_n=-2\sqrt{a_{n}b_{n}}+
\frac{1}{2n}\sum_{j=1}^{2n}\log\left( \frac
{(\widehat z_{j}^{(2n)})^{2}(w_{2}(\widehat z_{j}^{(2n)})-\sqrt{a_{n}b_{n}})
(w_{1}(\widehat z_{j}^{(2n)})+\sqrt{a_{n}b_{n}})}
{(w_{2}(\widehat z_{j}^{(2n)})+\sqrt{a_{n}b_{n}})
(w_{1}(\widehat z_{j}^{(2n)})-\sqrt{a_{n}b_{n}})}\right).
\end{equation}
Moreover, from (\ref{expansion a b}), (\ref{coeff-1}) and the fact that the coefficients $\alpha_{j}$ and $\beta_{j}$ are bounded with respect to $n$ follows that
$$
a_{n}b_{n}=1-\frac{i}{n}\sum_{k=1}^{2n}\widetilde z_k^{(2n)}t+\bigO(t^{2}).
$$
Also, recalling the definitions (\ref{def-w}) of $w_{1}$ and $w_{2}$, we deduce that
\begin{align*}
w_{2}(\widehat z_{j}^{(2n)})-\sqrt{a_{n}b_{n}} & =
-2+\left(\frac{i}{n}\sum_{k=1}^{2n}\widetilde z_k^{(2n)}-\widetilde z_j^{(2n)}\right)t+\bigO(t^{2}),\\
w_{1}(\widehat z_{j}^{(2n)})+\sqrt{a_{n}b_{n}} & =
2-\left(\frac{i}{n}\sum_{k=1}^{2n}\widetilde z_k^{(2n)}+\widetilde z_j^{(2n)}\right)t+\bigO(t^{2}),\\
w_{2}(\widehat z_{j}^{(2n)})+\sqrt{a_{n}b_{n}} & =
-\widehat z_j\left(1-\frac{\widetilde z_j^{(2n)}}{2} t+\bigO(t^{2})\right),\\
w_{1}(\widehat z_{j}^{(2n)})-\sqrt{a_{n}b_{n}} & =
-\widehat z_j\left(1+\frac{\widetilde z_j^{(2n)}}{2} t+\bigO(t^{2})\right).
\end{align*}
Plugging these estimates into (\ref{expl-c}) we get
$$2g_n(0)+2\ell_n=-2+\log(-4)+\bigO(t^{2}),$$
which together with (\ref{estim-err}) shows (\ref{limerr}) and the assertion about the constant $c_{n}$.
\hspace*{\fill}$\Box$
\section{Numerical experiments with interpolation points of modulus as large as the degree of the
interpolant}
\begin{figure}[htb!]
\centering
\includegraphics[width=6.5cm]{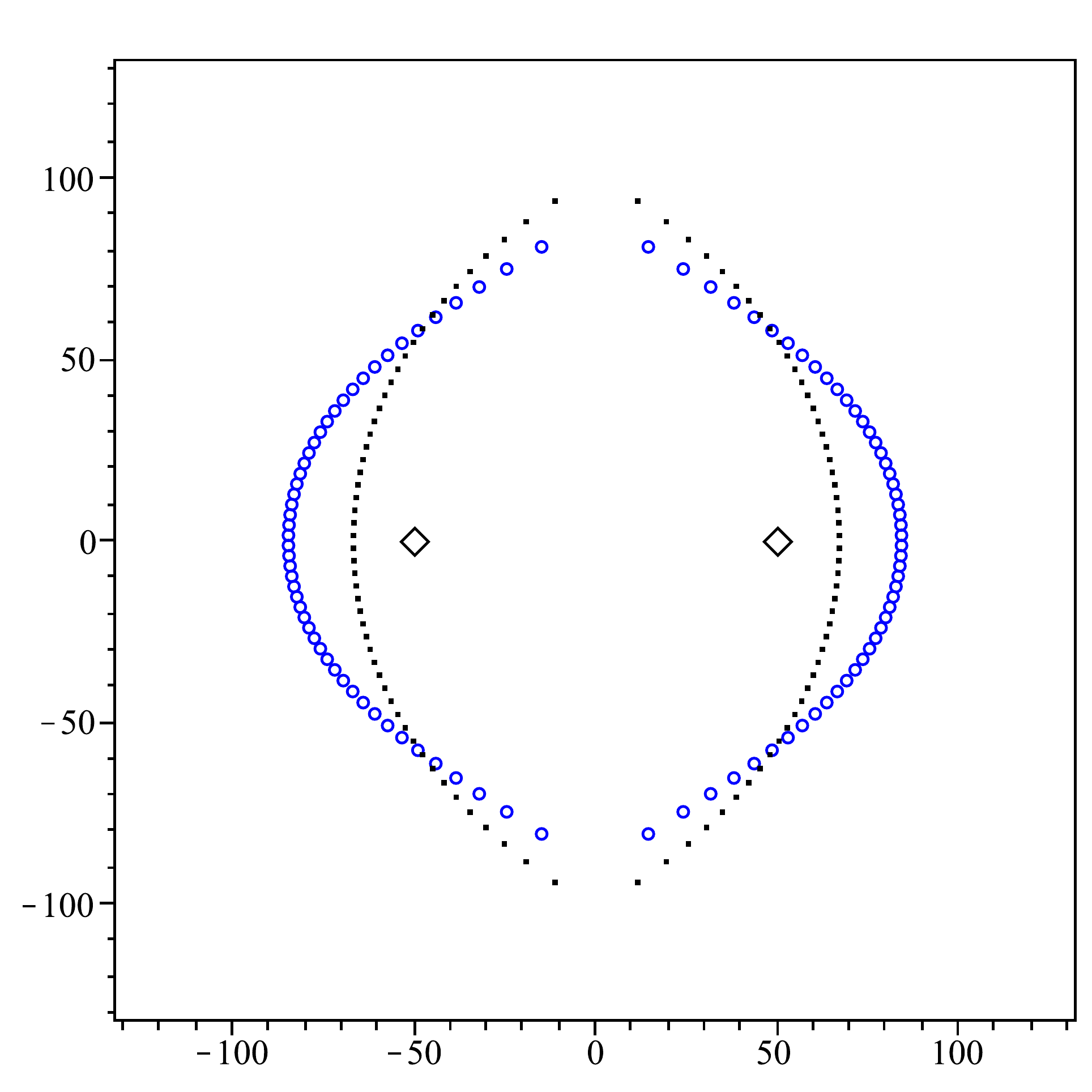}
\includegraphics[width=6.5cm]{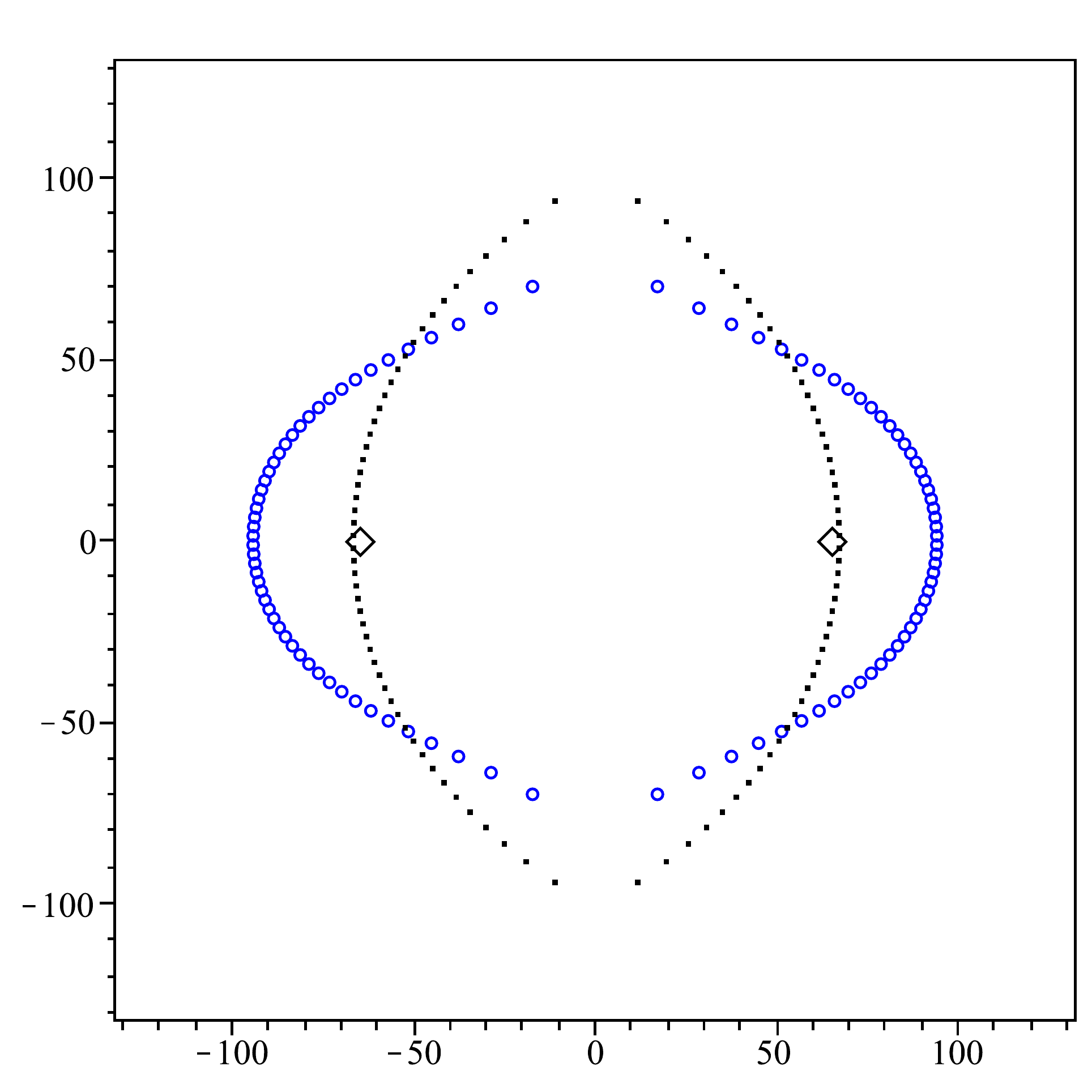}\\
\includegraphics[width=6.5cm]{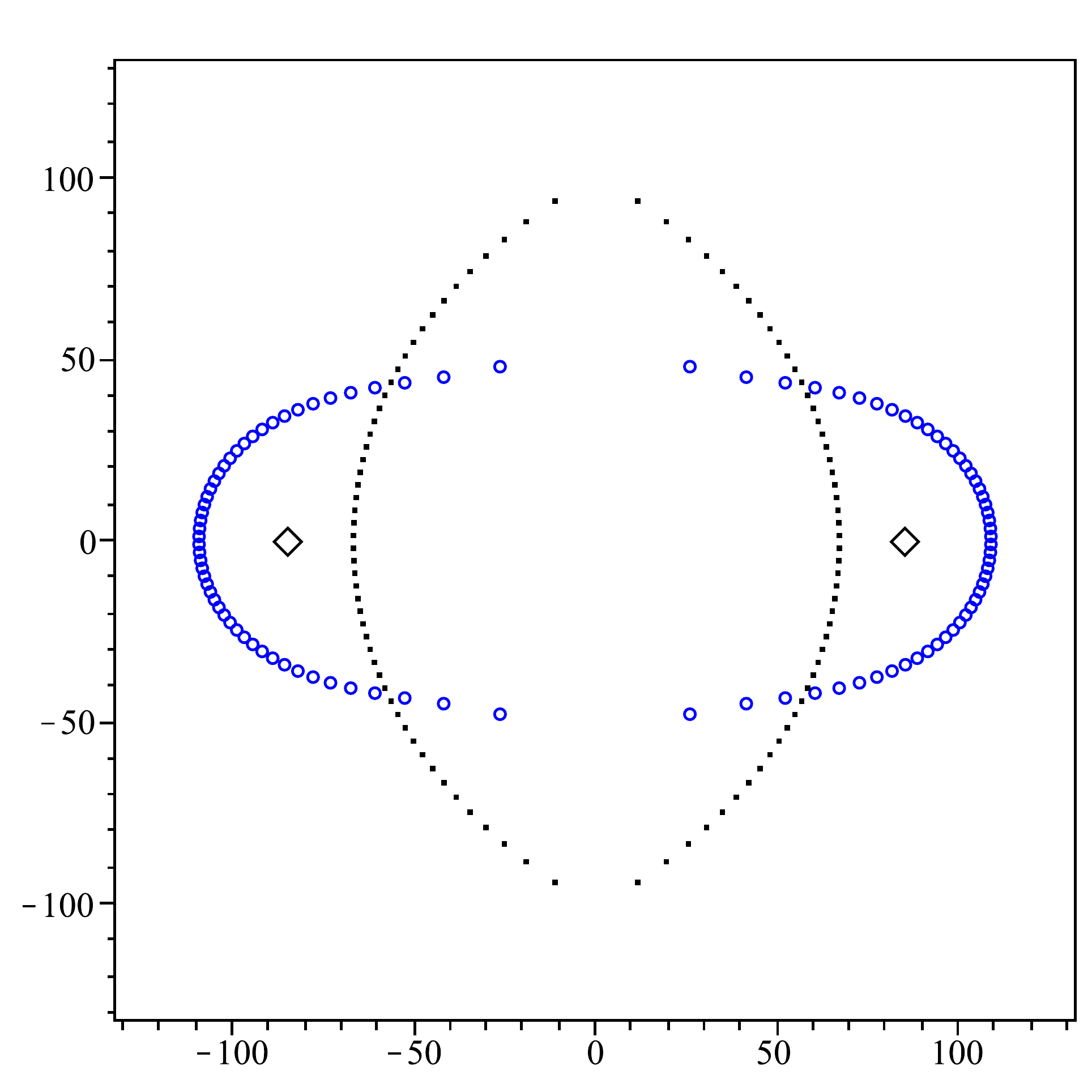}
\includegraphics[width=6.5cm]{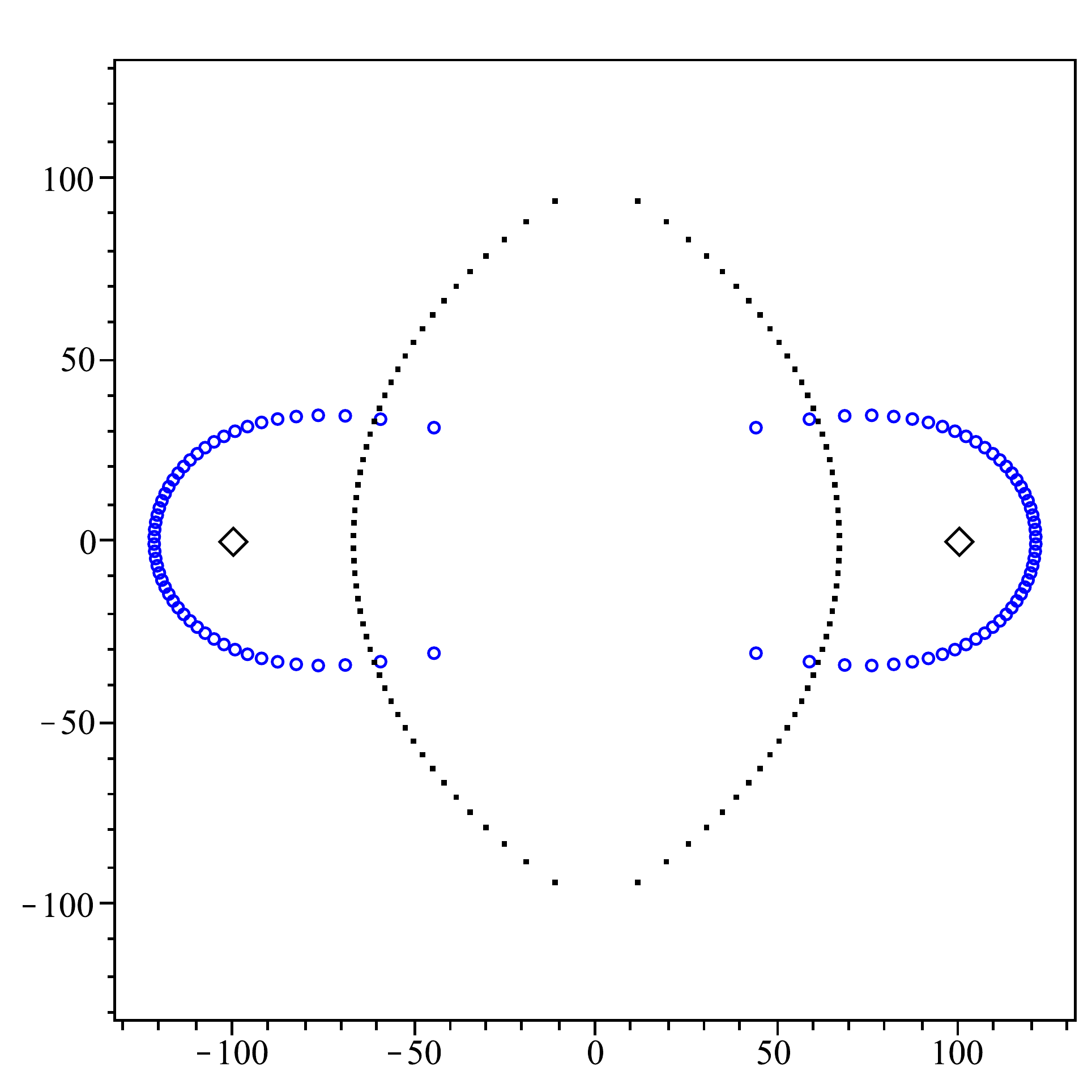}
\caption{Zeros and poles (circles in the left and right-half planes
respectively) of 2-point Pad\'e approximants of degree (51,50) with
two real interpolation points (diamonds) of multiplicity 51 located
at $\{-50,50\}$, $\{-65,65\}$, $\{-85,85\}$, $\{-100,100\}$. For
comparison, the zeros and poles of the Pad\'e approximant of degree
50 are shown with dots.} \label{2point}
\end{figure}

\begin{figure}[htb!]
\centering
\includegraphics[width=6.5cm]{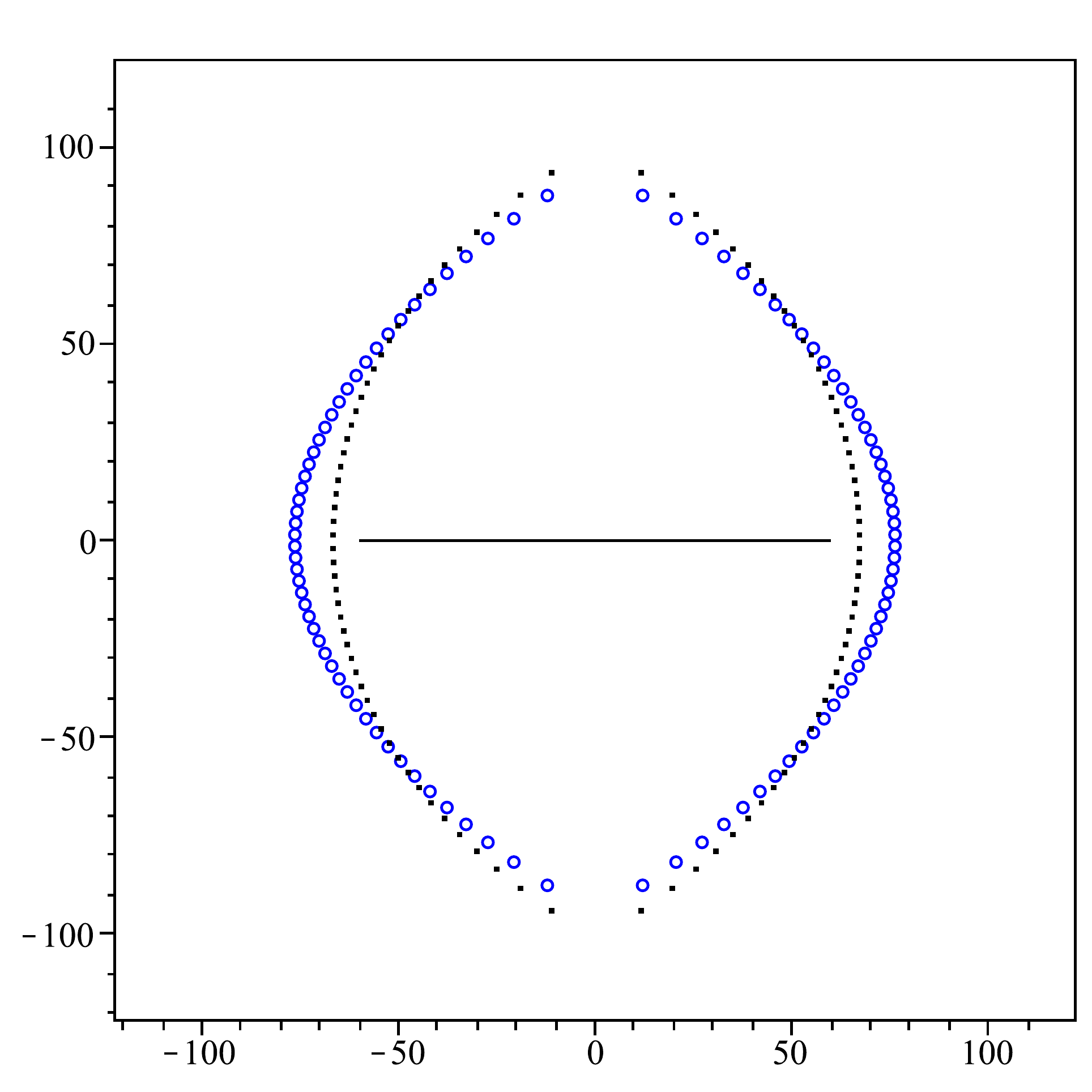}
\includegraphics[width=6.5cm]{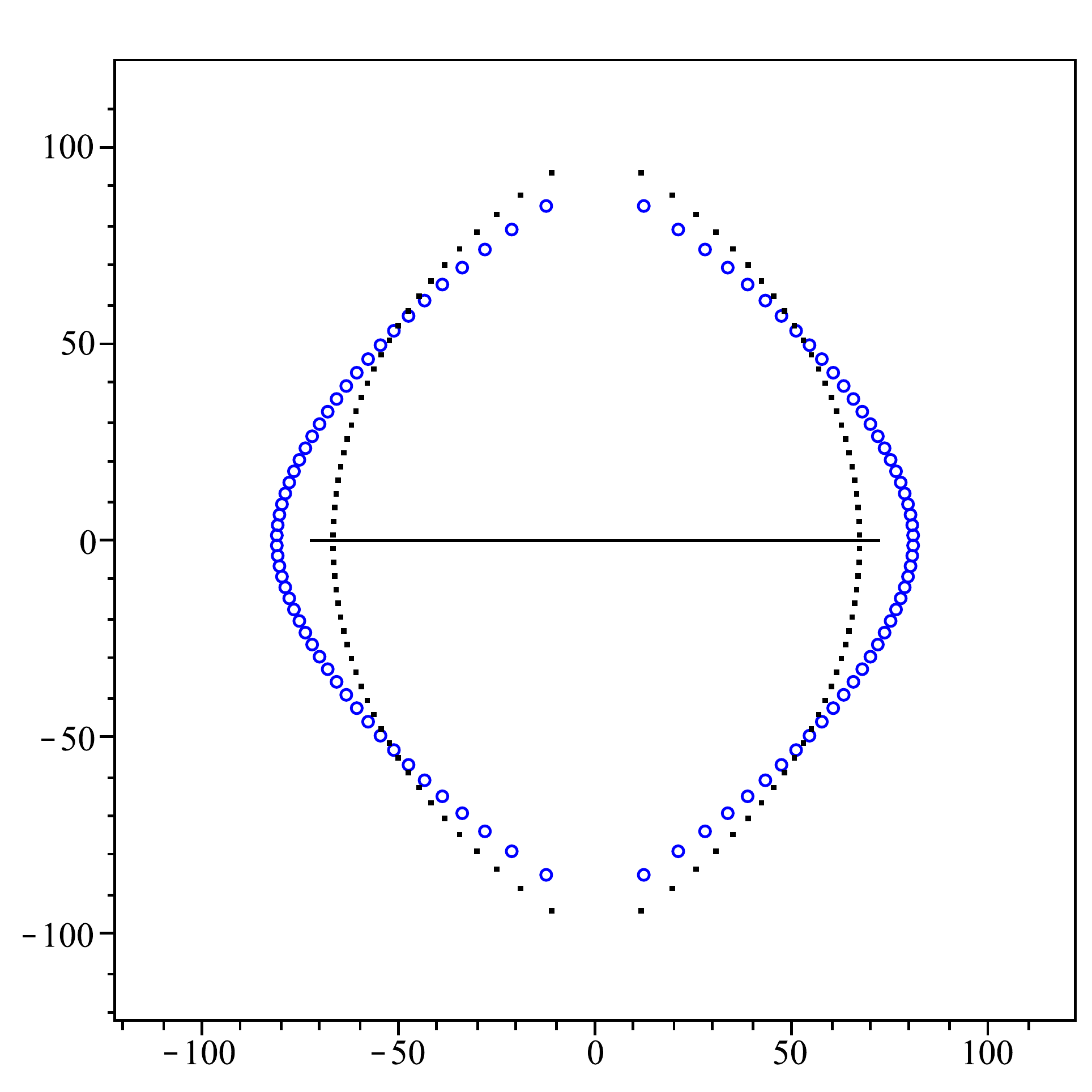}\\
\includegraphics[width=6.5cm]{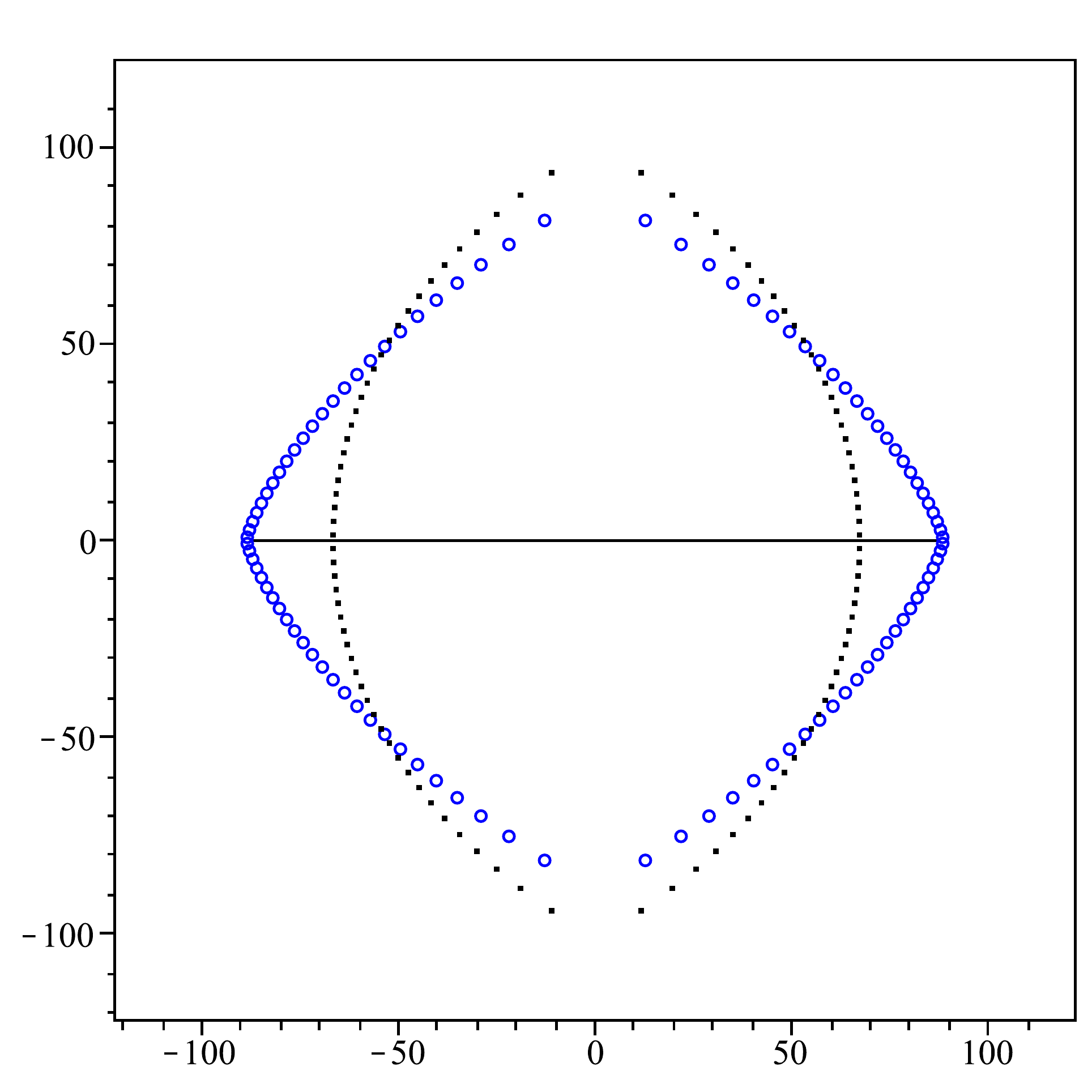}
\includegraphics[width=6.5cm]{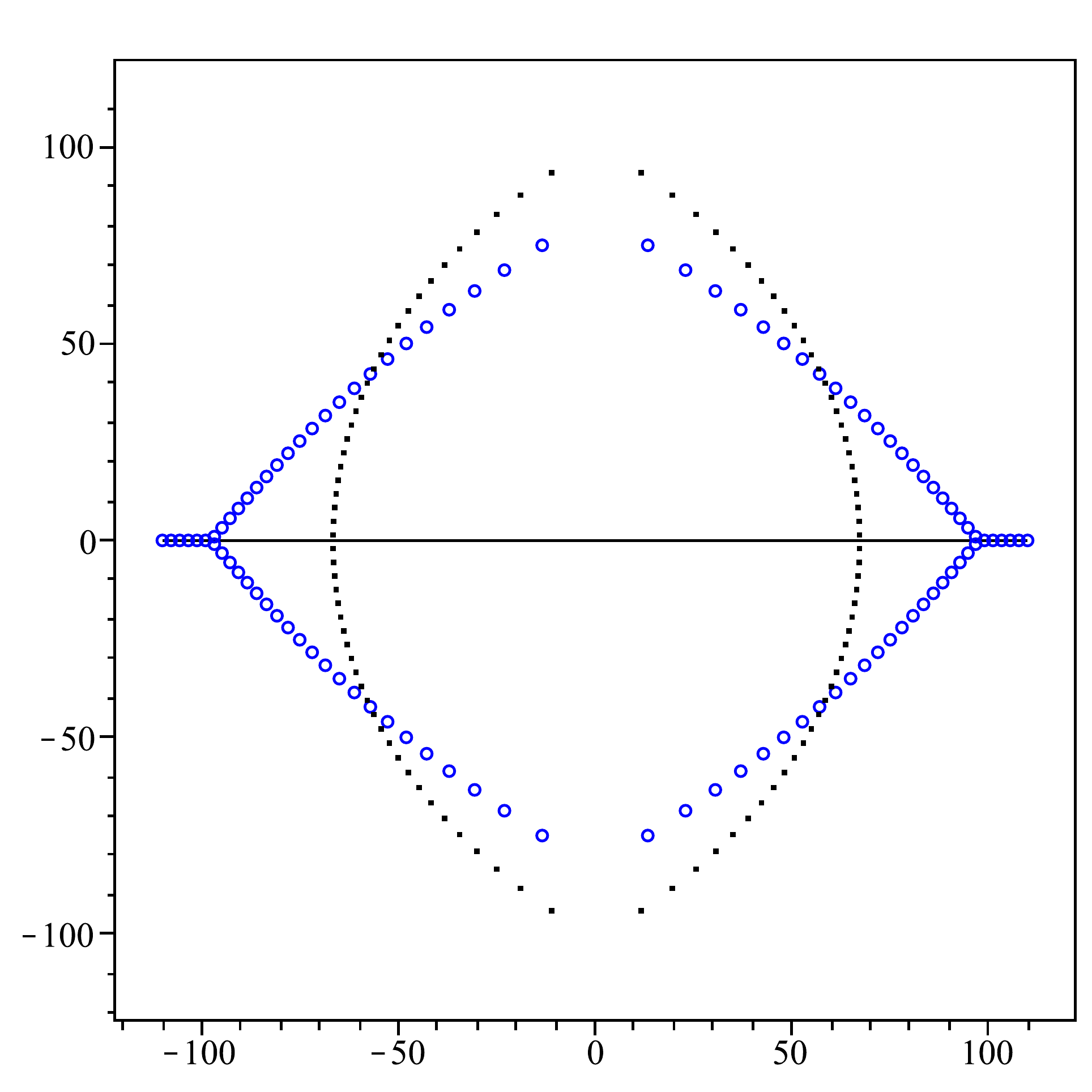}
\caption{Zeros and poles (circles in the left and right-half planes
respectively) of rational interpolants of degree 50 corresponding to
101 interpolation points regularly distributed on the real segments
$60I, 72.5I, 87.5I, 110I$, $I=[-1,1]$. For comparison, the zeros and
poles of the Pad\'e approximant of degree 50 are shown with dots.}
\label{line}
\end{figure}
In this section, we are interested in the location of zeros and
poles of rational interpolants associated to interpolation points
whose moduli are comparable to the degree of the interpolant. For
the particular case of shifted Pad\'e approximants
$p_{n}^{(\xi_{n})}/q_{n}^{(\xi_{n})}$ interpolating the exponential
function at $\xi_{n}$, we have the simple relation
\begin{equation*}
\frac{p_{n}^{(\xi_{n})}(z)}{q_{n}^{(\xi_{n})}(z)}=e^{\xi_{n}}\frac{p_{n}^{(0)}(z-\xi_{n})}
{q_{n}^{(0)}(z-\xi_{n})},
\end{equation*}
where $p_{n}^{(0)}(z)/q_{n}^{(0)}(z)$ denotes the usual Pad\'e
approximant interpolating $e^{z}$ at 0. Hence, we know at once that
the distributions of poles and zeros follow exactly the shift of the
interpolation point $\xi_{n}$ . In particular, the limit
distributions of the scaled (by $2n$) zeros and poles are modified
if and only if $\xi_{n}/n$ does not tend to 0, as $n$ tends to
infinity.

In Figure \ref{2point}, we consider the case of 2-point Pad\'e
approximants with two real symmetric interpolation points of equal
multiplicities. As the points approach the zeros and poles of the
usual Pad\'e approximant (or equivalently, in the scaled situation,
the critical curves $\gamma_{1}$ and $\gamma_{2}$), we can see how
the distributions of zeros and poles are modified. Clearly, a
repulsion takes place between the interpolation points and
the zeros and poles of the approximants. In Figure \ref{line}, we
consider the case of interpolation points regularly distributed on a
real segment. As the segment approaches the zeros and poles of the
usual Pad\'e approximant, we can again observe how the distributions
of zeros and poles are modified. Finally, in Figure \ref{circle},
the case of interpolation points regularly distributed on a circle
is depicted. The zeros and poles of the interpolants seem to be
pushed away as the circle intersects the limit distributions
corresponding to the usual Pad\'e approximant.
\begin{figure}[htb!]
\centering
\includegraphics[width=6.5cm]{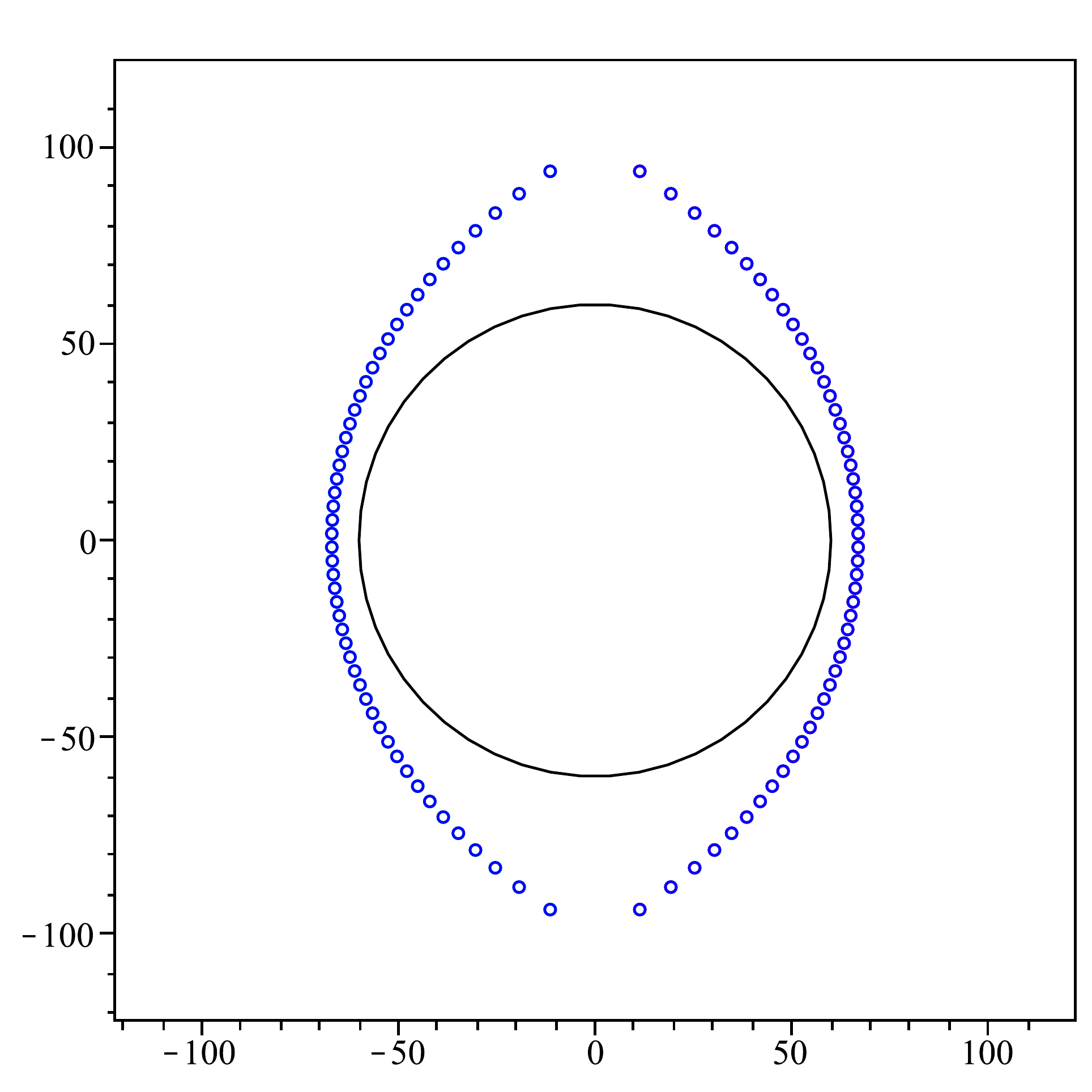}
\includegraphics[width=6.5cm]{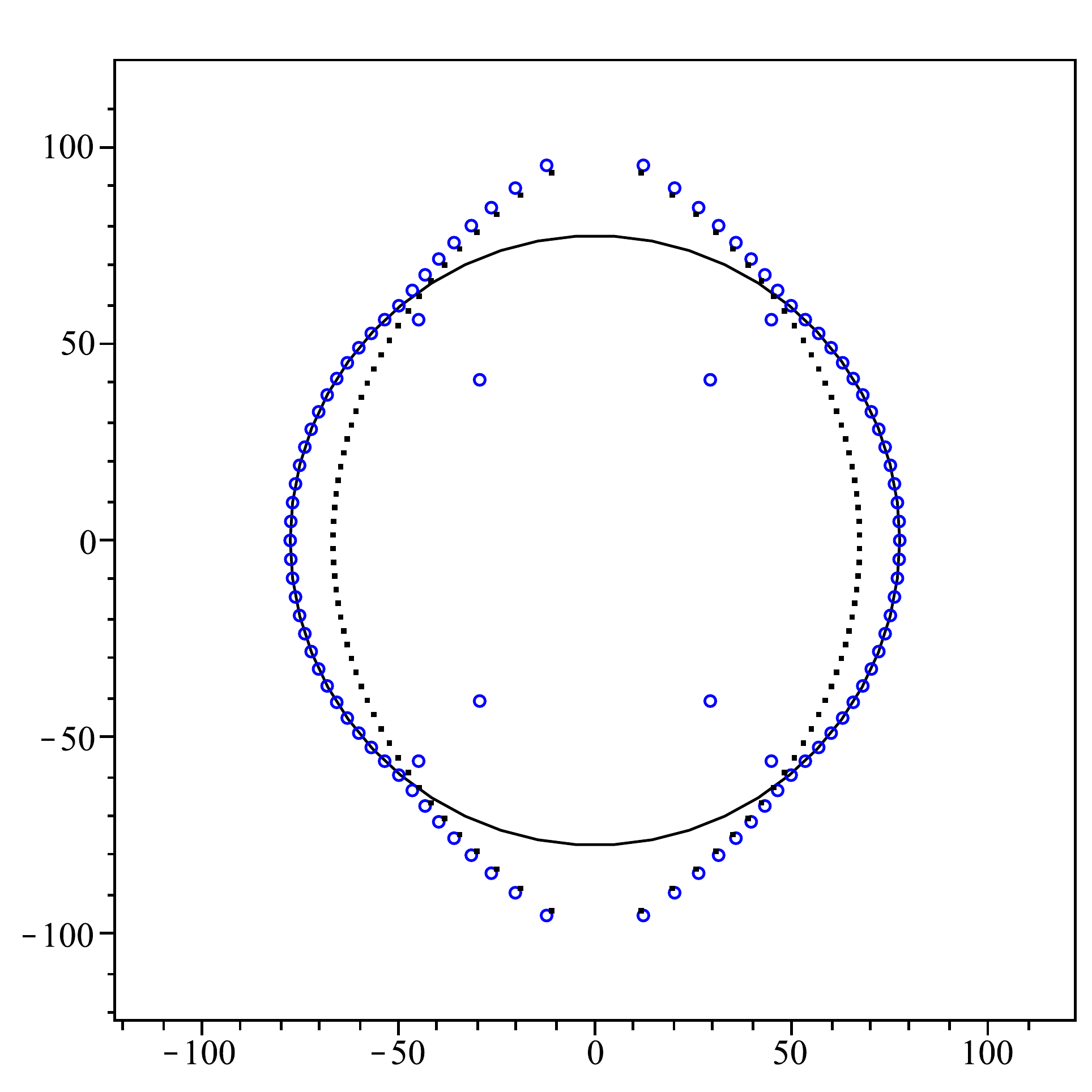}\\
\includegraphics[width=6.5cm]{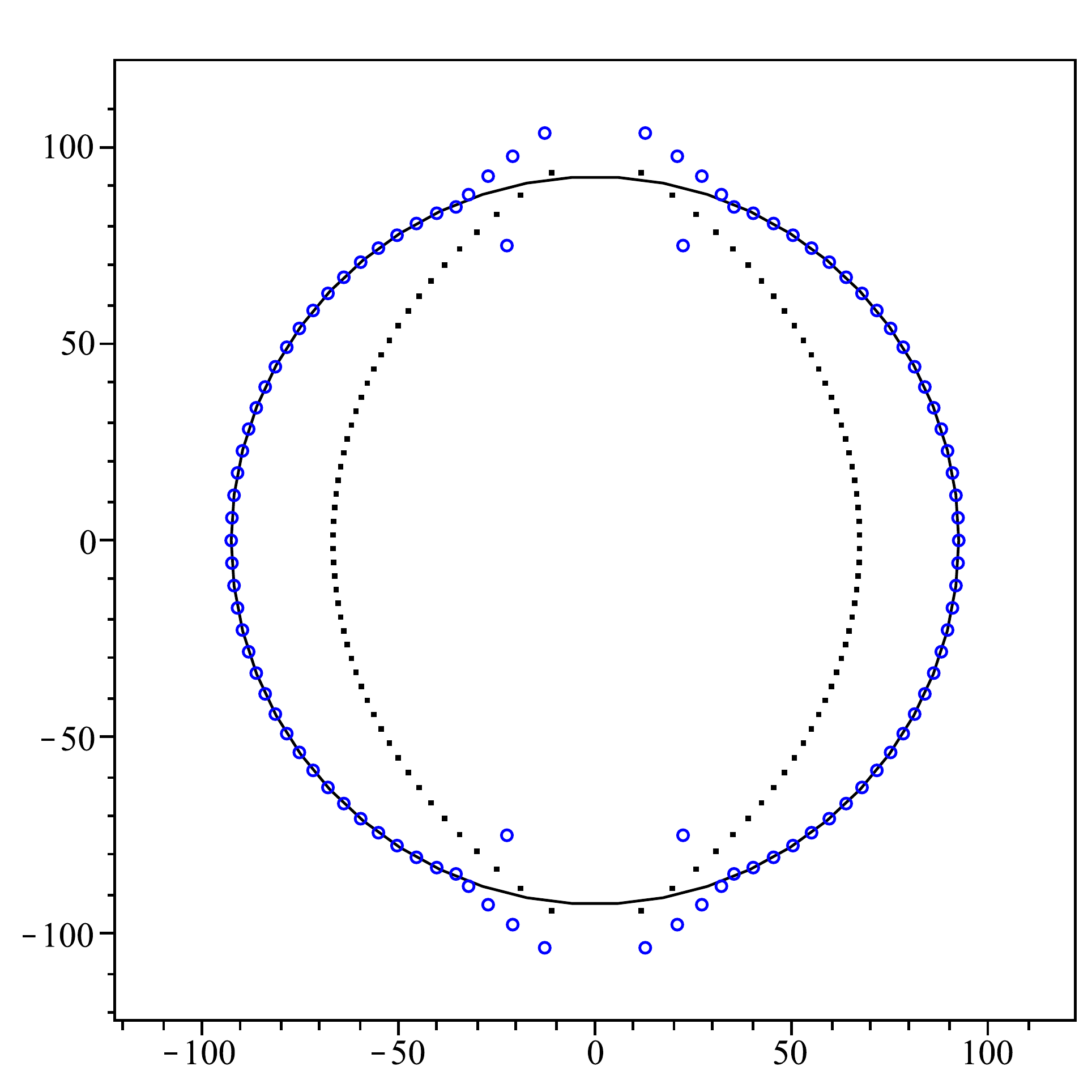}
\includegraphics[width=6.5cm]{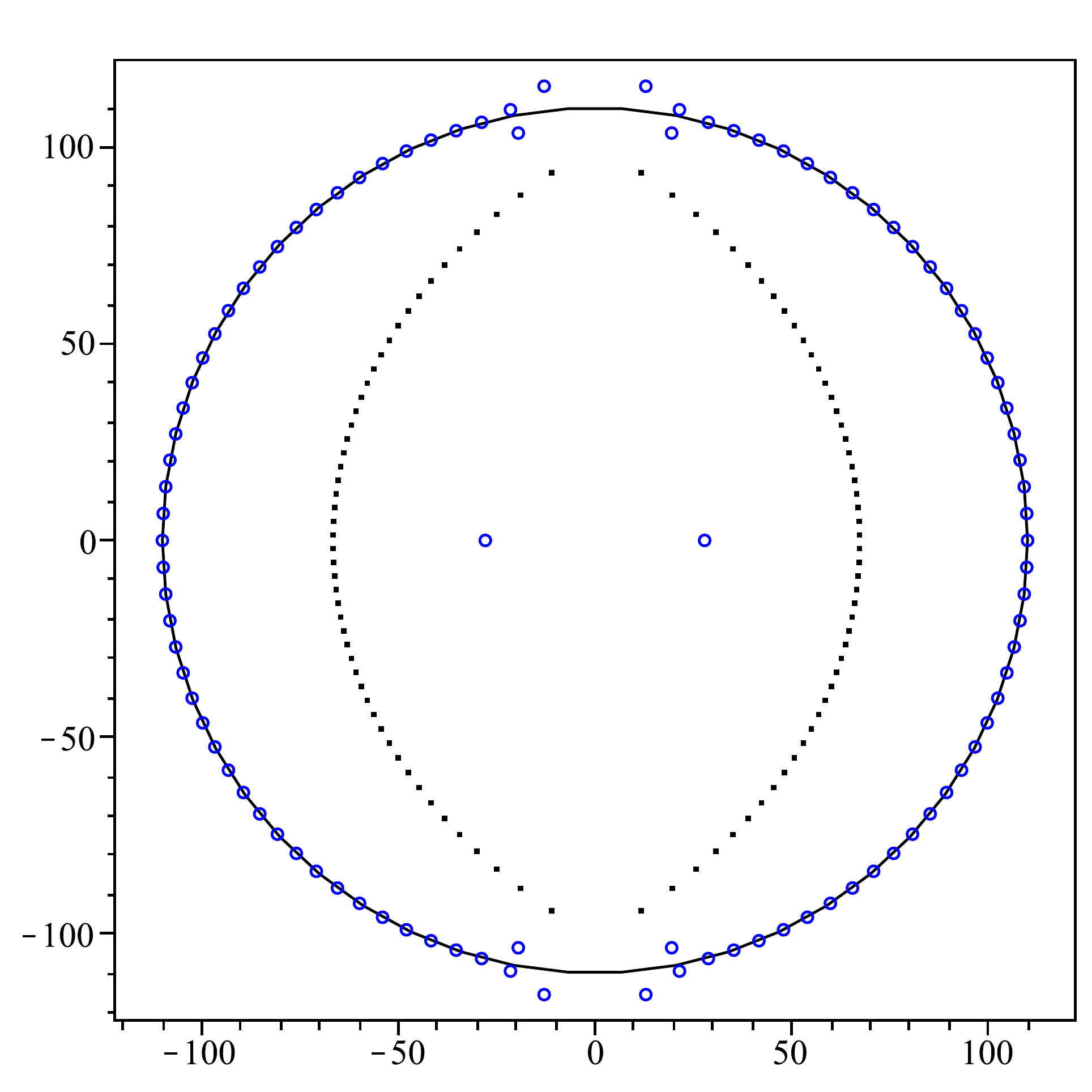}
\caption{Zeros and poles (circles in the left and right-half planes
respectively) of rational interpolants of degree 50 corresponding to
101 interpolation points regularly distributed on the circles of
radius 60, 77.5, 92.5, and 110. For comparison, the zeros and poles
of the Pad\'e approximant of degree 50 are shown with dots.}
\label{circle}
\end{figure}

From a theoretical point of view, polynomials whose roots are the
above zeros and poles  still satisfy the orthogonality relations
(\ref{orthog}). We note that, in the corresponding potential
(\ref{Vn}), the sum of log terms becomes preponderant as the
interpolation points grow faster with $n$. This should account for
the modification in the zeros and poles distributions of the
rational interpolants, that we observe in our experiments. In any
case, it would be interesting to study in more detail the
interaction between the interpolation points and the zeros and poles
of the approximants.

\section*{Acknowledgements}
TC acknowledges support by the Belgian Interuniversity
Attraction Pole P06/02 and by the ERC program FroM-PDE.

\obeylines
\texttt{Tom Claeys, tom.claeys@uclouvain.be
Universit\'e Catholique de Louvain
Chemin du cyclotron 2
B-1348 Louvain-La-Neuve, BELGIUM
\medskip
Franck Wielonsky, wielonsky@cmi.univ-mrs.fr
Laboratoire LATP - UMR CNRS 6632 Universit\'e de Provence
CMI 39 Rue Joliot Curie
F-13453 Marseille Cedex 20, FRANCE }

\end{document}